\documentclass[11pt,reqno]{amsart}

\usepackage[margin=3.5cm]{geometry}

\usepackage{url}
\usepackage{mathtools}
\usepackage[dvipsnames]{xcolor}

\usepackage{amsthm,amssymb,amsbsy,amsopn,amstext,cite,
                                               amsxtra,euscript,bm,mathrsfs}
\usepackage[
    colorlinks=true,
    linkcolor=MidnightBlue,
    citecolor=MidnightBlue,
    urlcolor=MidnightBlue
]{hyperref}

\hypersetup{breaklinks=true}

\let\underbrace\LaTeXunderbrace

\def\({\left(}
\def\){\right)}
\usepackage[english]{babel}

\newtheorem{thm}{Theorem}
\newtheorem{lem}[thm]{Lemma}

\newtheorem{cor}[thm]{Corollary}
\newtheorem{prop}[thm]{Proposition}

\theoremstyle{definition}
\newtheorem{definition}[thm]{Definition}

\theoremstyle{remark}
\newtheorem{remark}[thm]{Remark}
\newtheorem*{ack}{Acknowledgements}
\newtheorem*{AI}{AI statement}

\newcommand{\vol}{\operatorname{vol}}

\newcommand{\GL}{\operatorname{GL}}
\newcommand{\SL}{\operatorname{SL}}

\newcommand{\SO}{\operatorname{SO}}

\numberwithin{equation}{section}
\numberwithin{thm}{section}
\numberwithin{table}{section}

\def\cD{{\mathcal D}}

\def\cM{{\mathcal M}}

\def \Q {{\mathbb Q}}

\def \R {{\mathbb R}}
\def \Z {{\mathbb Z}}

\def\a{\mathbf{a}}
\def\b{\mathbf{b}}

\newcommand{\ZZ}{\mathbb Z}
\newcommand{\ZZp}{\mathbb Z_{\text{prim}}}
\newcommand{\RR}{\mathbb R}

\newcommand{\eps}{\varepsilon}

\newcommand{\adj}{\mathrm{adj}}

\newcommand{\z}{\mathbf{z}}
\newcommand{\y}{\mathbf{y}}

\newcommand{\x}{\mathbf{x}}
\DeclareMathOperator{\rank}{rank}

\begin{document}

\makeatletter
\def\subjclassname{$2020$ Mathematics Subject Classification}
\makeatother

\title{The divisor function for matrices}

\author[T. Browning]{Tim Browning}
\address{IST Austria\\
Am Campus 1\\
3400 Klosterneuburg\\
Austria}
\email{tdb@ist.ac.at, 
nikita.kalinin@ist.ac.at,
lena.wurzinger@ist.ac.at}

\author[N.  Kalinin]{Nikita P. Kalinin}

\author[A. Ostafe]{Alina Ostafe}
\address{School of Mathematics and Statistics, University of New South Wales, Sydney NSW 2052, Australia}
\email{alina.ostafe@unsw.edu.au}

\author[D. Schindler]{Damaris Schindler}
\address{University of G\"ottingen\\
Bunsenstraße 3--5\\
37073 G\"ottingen\\
Germany}
\email{damaris.schindler@mathematik.uni-goettingen.de}

\author[L. Wurzinger]{Lena Wurzinger}

 \subjclass{11C20 (15B36, 22D40, 22E30, 43A10)}

\begin{abstract}  
We introduce a matrix divisor function 
$\tau_n(T,M)$, counting factorisations $AB=M$ for 
$n\times n$ integer matrices $A,B$ of height  at most $T$. For a fixed non-singular $M$, 
or for the zero matrix $M=O_n$, we prove 
an asymptotic formula for 
$\tau_n(T,M)$, as $T\to \infty$, using lattice point counting. We also prove an essentially sharp uniform   
upper bound for $\tau_n(T,M)$, for an arbitrary non-singular matrix $M$.
\end{abstract}

\maketitle

\thispagestyle{empty}
 \setcounter{tocdepth}{1}
 \tableofcontents

\section{Introduction}

For a positive integer $n$, 
let  $\cM_n\(\Z\)$   denote the set of $n\times n$ matrices with integer coefficients. 
For  $T\geq 1$, let $\cM_n\(\Z; T\)$   denote the
set  of 
$A = \(a_{ij}\)_{1\leq i,j\leq n}\in \cM_n(\Z)$ of 
height $|A|\leq T$, where $|A|=\max_{1\leq i,j\leq n}|a_{ij}|$.
In particular,   $\# \cM_n\(\Z; T\)$ has exact order $T^{n^2}$.
In this paper we consider a matrix analogue of the divisor function. Thus, for given $T\geq 1$ and 
$M\in \mathcal{M}_n(\ZZ)$, we let
\begin{equation}\label{eq:divisor}
\tau_n(T,M)=\#\left\{(A,B)\in \mathcal{M}_n(\ZZ)^2: |A|,|B|\leq T, ~AB=M
\right\}.
\end{equation}
We will be interested in the size of this function, both uniformly in $M$ (where upper  bounds are of primary concern) and for a fixed choice of  $M$ (where  an asymptotic formula is the goal).
 We will allow all implied constants to depend on $n$, but we shall strive for uniformity in $M$, where possible.
When $n=1$ it is natural to take $T=|M|$ and recover, up to a factor of 2, the classical divisor function (unless $M=0$).
This is no longer the case for $n\geq2$, however, since 
$\tau_n(T,M)$ can grow with $T$,  even when $M=I_n$ is the identity matrix. 
(An alternative notion of divisor function was studied by Bhowmik--Ramar\'e \cite{BR} and Bhowmik--Wu \cite{bhowmik_wu_zeta2001}. It is  defined as the number of left divisor classes $A \GL_n(\Z)$ such that $AB=M$, for some $B \in \mathcal{M}_n(\Z)$.)

We henceforth let 
\begin{equation}\label{eq:divisor-id}
\tau_n(T)=\tau_n(T,I_n),
\end{equation}
which counts the number of 
invertible matrices $A\in\cM_n\(\Z; T\)$ whose inverse is also in  $\cM_n\(\Z; T\)$.
In other words, $\tau_n(T)$ counts 
$A\in\cM_n\(\Z; T\)$ such that  $\det A=\pm 1$ and 
$|\Delta_{i,j}|\leq T$, where $\Delta_{i,j}$ ranges over  
all $(n-1)\times (n-1)$ minors, for $1\le i,j\le n$.
We will 
 use unipotent matrices to 
prove in Section~\ref{sec:lower} that 
\begin{equation}\label{eq:lower-easy}
\tau_n(T)\gg T^{\lfloor n^2/4\rfloor},
\end{equation}
for $n\geq 2$, which  illustrates growth in $T$. 
Note that we may write 
$
\tau_n(T)=2D_n(T),
$
where $D_n(T)=\#\cD_n(T)$ and
$$
\cD_n(T)=\{A \in \SL_n\(\Z; T\): |\Delta_{i,j}|\le T \text{ for $1\le i,j\le n$}\}.
$$
For $n=2$, we have $\cD_2(T)=\SL_2\(\Z; T\)$  and so it follows from a standard hyperbolic lattice point counting result (as in Krieg \cite{krieg}, for example) 
that 
\begin{equation}\label{eq:krieg}
\tau_2(T)=\frac{192}{\pi^2}T^2 +O(T \log T ).
\end{equation}
When  $n=3$, the quantity $D_3(T)$ has been studied by 
Maucourant
\cite{Mau}, 
with the outcome that an asymptotic formula is obtained for $D_3(T)$ 
 in \cite[Proposition 4.2]{Mau}.
(Note that Maucourant  works with the Euclidean norm, but the adjustment 
to the supremum norm is routine and merely changes the leading constant.)
 In private communication with the authors, Fran\c{c}ois Maucourant has explained how the main theorem in \cite{Mau} can be applied to 
 produce an asymptotic formula for  $D_n(T)$, for any $n\geq 3$.  
This yields 
 constants $c,\delta>0$ such that 
\begin{equation*}
\tau_n(T)=c T^{\lfloor n^2/2\rfloor} +O\left(T^{\lfloor n^2/2\rfloor-\delta}\right).
\end{equation*}
For comparison, when the cofactor size  constraint is dropped, 
 an asymptotic formula for  $\#\SL_n\(\Z; T\)$  is obtained by  Duke, Rudnick and Sarnak~\cite{DRS},  in which the main term has order $T^{n^2-n}$.

We now return to  the matrix divisor function 
$\tau_n(T,M)$ for an arbitrary non-singular 
matrix  $M\in \cM_n(\Z)$. 
For any full rank lattice $\mathsf\Lambda\subset \ZZ^n$ we let $B_{\mathsf\Lambda}$ denote a chosen basis matrix. 
We begin by recording an asymptotic formula, in which the implied constant depends implicitly on $M$.

\begin{thm}\label{thm:asymptotic_formula}
    Let $n \ge 2$ and let  $M \in \mathcal{M}_{n}(\Z)$ with $\det M \neq 0$. Then
    \begin{align*}
        \tau_n(T,M) \sim c_{M} T^{\lfloor n^2/2 \rfloor} ,
    \end{align*}
as $T\to \infty$, 
    where
    \[
    c_{M} = 2
    \sum_{
    \substack{
    M \Z^n \subseteq \mathsf{\Lambda} \subseteq \Z^n, \\
    \mathsf{\Lambda} \text{ lattice }
    }
    }
    \kappa (B_{\mathsf\Lambda}, B_{\mathsf\Lambda}^{-1}M),
    \] 
    for positive constants 
  $ \kappa (B_{\mathsf\Lambda}, B_{\mathsf\Lambda}^{-1}M)$ that will be defined in \eqref{def:kappa}.
\end{thm}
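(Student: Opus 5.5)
We reduce the count to counting points of a lattice orbit of $\SL_n$, or more precisely of $\GL_n(\Z)$, and then use equidistribution.

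\textbf{Step 1: decomposition by the column lattice of $A$.} Given a factorisation $AB=M$ with $\det M\neq 0$, both $A$ and $B$ are non-singular. Put $\mathsf\Lambda=A\Z^n$. Then $M\Z^n=A(B\Z^n)\subseteq \mathsf\Lambda\subseteq \Z^n$, and there are only finitely many such lattices. For a fixed $\mathsf\Lambda$ with chosen basis matrix $B_{\mathsf\Lambda}$, the matrices $A$ with $A\Z^n=\mathsf\Lambda$ are exactly $A=B_{\mathsf\Lambda}\gamma$ with $\gamma\in\GL_n(\Z)$. For each such $A$ the factor $B$ is forced to be
\[
B=A^{-1}M=\gamma^{-1}B_{\mathsf\Lambda}^{-1}M .
\]
Here $B_{\mathsf\Lambda}^{-1}M$ is an integer matrix, because $M\Z^n\subseteq\mathsf\Lambda$. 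Hence
\[
\tau_n(T,M)=\sum_{M\Z^n\subseteq\mathsf\Lambda\subseteq\Z^n} N_{\mathsf\Lambda}(T),
\]
where $N_{\mathsf\Lambda}(T)$ counts $\gamma\in\GL_n(\Z)$ with $|P\gamma|\le T$ and $|\gamma^{-1}Q|\le T$, for $P=B_{\mathsf\Lambda}$ and $Q=B_{\mathsf\Lambda}^{-1}M$. Splitting into $\det\gamma=\pm1$ gives two $\SL_n(\Z)$-counts. These two counts are equal after conjugating by a fixed matrix of determinant $-1$ that changes $P,Q$ appropriately; this is where the factor $2$ in $c_M$ comes from. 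It therefore suffices to prove
\[
\#\{\gamma\in\SL_n(\Z): |P\gamma|\le T,\ |\gamma^{-1}Q|\le T\}\sim \kappa(P,Q)\,T^{\lfloor n^2/2\rfloor}
\]
for fixed non-singular $P,Q$. This is the definition of $\kappa$ that we expect \eqref{def:kappa} to encode.

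\textbf{Step 2: lattice point counting in $\SL_n(\R)$.} Consider the family of regions
\[
R_T=\{g\in\SL_n(\R): |Pg|\le T,\ |g^{-1}Q|\le T\}.
\]
We apply the well-roundedness criterion of Eskin--McMullen, or Gorodnik--Nevo, for counting lattice points of $\Gamma=\SL_n(\Z)$ in such regions:
\[
\#(\Gamma\cap R_T)\sim \vol(R_T)/\vol(\SL_n(\R)/\Gamma).
\]
Well-roundedness holds because the defining conditions are norm conditions. Perturbing $g$ by an $\eps$-neighbourhood of the identity changes $|Pg|$ and $|g^{-1}Q|$ by a factor $1+O(\eps)$, so $R_T$ is sandwiched between $R_{(1\pm O(\eps))T}$. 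It then suffices to show that
\[
\vol(R_T)\sim \kappa' T^{d}
\]
with $d=\lfloor n^2/2\rfloor$, since this gives $\vol(R_{(1+\eps)T})/\vol(R_T)\to(1+\eps)^d$.

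\textbf{Step 3: volume asymptotics (main obstacle).} The condition $|g^{-1}Q|\le T$ is equivalent, up to constants depending on $Q$, to all cofactors of $g$ being $\ll T$. Use the Cartan decomposition $g=k_1 a k_2$ with $a=\mathrm{diag}(e^{t_1},\dots,e^{t_n})$, $\sum t_i=0$. Up to bounded distortion coming from $P,Q$, the conditions become $\max_i t_i\le \log T+O(1)$ and $\max_i(-t_i)\le \log T+O(1)$. The Haar density is
\[
\prod_{i<j}\sinh(t_i-t_j).
\]
So the volume is governed by maximising $\sum_{i<j}(t_i-t_j)$ over the polytope $|t_i|\le\log T$, $\sum t_i=0$. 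The maximum is attained by taking half of the $t_i$ equal to $\log T$ and half equal to $-\log T$, with one coordinate $0$ when $n$ is odd. This gives exponent $\lfloor n/2\rfloor\lceil n/2\rceil=\lfloor n^2/4\rfloor$ per unit of $\log T$ in each difference, and the total works out to $\lfloor n^2/2\rfloor$.

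The delicate point is to obtain a genuine asymptotic, not just the order of magnitude, with the exact constant. For even $n$ the maximiser is a vertex of the polytope. For odd $n$ the maximum is attained along a face, and one must check that no $\log T$ factor appears, i.e. that the maximising face has dimension zero in the relevant direction; if a $\log T$ factor did appear, the stated asymptotic would fail. We would handle this by rescaling $g\mapsto T$-dilated variables and proving that $T^{-d}\vol(R_T)$ converges, by dominated convergence, to an explicit integral $\kappa'(P,Q)$. This limit is positive because $R_T$ contains the unipotent-type families used for \eqref{eq:lower-easy}, thickened appropriately. Finally, summing over the finitely many lattices $\mathsf\Lambda$ yields $c_M$.
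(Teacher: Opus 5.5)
Your Steps 1 and 2 follow the paper's route. You decompose by $\mathsf\Lambda=A\Z^n$, pass to $\SL_n(\Z)$, and apply Gorodnik--Nevo with well-roundedness from the $1+O(\eps)$ perturbation of the norms. Gorodnik--Nevo also needs the stable mean ergodic theorem, which the paper gets from the $L^{p+}$ property of $\SL_n(\R)$ on $L^2_0(\SL_n(\R)/\SL_n(\Z))$; with Eskin--McMullen you would cite mixing instead. One correction in Step 1: conjugation preserves the determinant, so it cannot match the two halves. The right bijection is $\gamma\mapsto\gamma J$ with $J=\mathrm{diag}(-1,1,\dots,1)$. It leaves $|P\gamma|$ and $|(\gamma J)^{-1}Q|=|J\gamma^{-1}Q|=|\gamma^{-1}Q|$ unchanged for the same pair $(P,Q)$, which is why both halves contribute $\kappa(P,Q)$.

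The genuine gap is Step 3, on which everything rests. The asymptotic is needed both for the final count and for the well-roundedness ratio $(1\pm\eps)^{\lfloor n^2/2\rfloor}$. You leave open, and for odd $n$ misdiagnose, the one fact that makes dominated convergence work.

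\begin{itemize}
\item Set $d_i=t_i-t_{i+1}\ge 0$. On $\{\mathbf t\in H^+: t_1\le 1,\ t_n\ge -1\}$ one has $F(\mathbf t)=\sum_{i<j}(t_i-t_j)=\sum_i i(n-i)d_i$ and $\sum_i d_i\le 2$.
\item For $n=2m+1$, maximality forces $d_m+d_{m+1}=2$. Hence $t_1=\dots=t_m=1$ and $t_{m+2}=\dots=t_n=-1$, and then $\sum_i t_i=0$ forces $t_{m+1}=0$.
\item So $\mathbf v$ is the unique maximiser for every $n$: there is no positive-dimensional maximising face and no $\log T$.
\end{itemize}

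The paper turns this into a proof as follows. Shift the Cartan parameter, $\mathbf t=\mathbf x+L\mathbf v$ with $T=e^L$, and divide $g$ and $g^{-1}$ by $T$. Then:
\begin{itemize}
\item $e^{-L}a_{\mathbf x+L\mathbf v}\to D_+(\mathbf x)$ and $e^{-L}a_{\mathbf x+L\mathbf v}^{-1}\to D_-(\mathbf x)$;
\item the domains $H^+-L\mathbf v$ exhaust the cone $\mathcal C$;
\item the indicators converge off a real-analytic null set;
\item the renormalised integrand is dominated by $\mathbf{1}_{\mathcal C_0}(\mathbf x)e^{F(\mathbf x)}$.
\end{itemize}
This majorant is integrable exactly because uniqueness gives $F(\mathbf r)\le-\eta|\mathbf r|$ on the recession cone $\mathcal R$. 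Without it, your sketch does not even pin down the order $T^{\lfloor n^2/2\rfloor}$.

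Your positivity argument also fails as stated. The family $\{A_X:|X|\le T\}$ is only $\lfloor n^2/4\rfloor$-dimensional, so it and its one-sided thickenings have volume $\asymp T^{\lfloor n^2/4\rfloor}$. That says nothing about $T^{-\lfloor n^2/2\rfloor}\vol(R_T)$. You need a positive-measure set of $(k_1,\mathbf x,k_2)$ on which the limiting integrand is positive. The paper uses $K\times W\times K$, with $W$ a small ball inside $\mathcal C$ around a slight perturbation of $-R\mathbf v$ and $R$ large. A two-sided thickening $\mathcal O g\mathcal O$ of a single $g$ with Cartan parameter near $L\mathbf v$ would also work, but verifying that is the same Cartan-density computation.
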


We shall deduce this result  from a very general lattice point counting result due to Gorodnik and Nevo \cite{gorodnik_nevo} in Sections \ref{section:latticepts} and \ref{sec:asymp}. 
    We expect it is possible to prove a version of Theorem~\ref{thm:asymptotic_formula} with an explicit error term. The main challenge lies in determining a fuller asymptotic expansion of the volume in Lemma~\ref{lem:volumecomputation}, which currently relies on the dominated convergence theorem.

If $\det(M) = \pm 1$, then the only lattice $\mathsf{\Lambda}$ with $M \Z^n \subseteq \mathsf{\Lambda} \subseteq \Z^n$ is $\mathsf{\Lambda} = \Z^n$. 
We therefore obtain the following consequence.

\begin{cor}\label{cor1.3}
    Let $n \ge 2$ and let  $M \in \mathcal{M}_{n}(\Z)$ with $\det M =\pm 1$. Then
    \begin{align*}
        \tau_n(T,M) \sim 
        2   \kappa (I_n, M)
        T^{\lfloor n^2/2 \rfloor} ,
    \end{align*}
    as $T\to \infty$, where  $\kappa(I_n,M)$ is the positive constant defined in \eqref{def:kappa}.
\end{cor}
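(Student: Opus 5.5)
The corollary follows from Theorem~\ref{thm:asymptotic_formula} once we identify the set of lattices appearing in the definition of $c_M$. The plan has three steps.

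\textbf{Step 1: identify the lattices.} Since $\det M=\pm 1$, the matrix $M$ lies in $\GL_n(\Z)$. Hence $M\Z^n=\Z^n$: the inclusion $M\Z^n\subseteq\Z^n$ is automatic, and conversely $M^{-1}$ has integer entries because $M^{-1}=\pm\adj(M)$, so any $\x\in\Z^n$ equals $M(M^{-1}\x)$.

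\textbf{Step 2: collapse the sum.} The condition $M\Z^n\subseteq\mathsf\Lambda\subseteq\Z^n$ now reads $\Z^n\subseteq\mathsf\Lambda\subseteq\Z^n$. This forces $\mathsf\Lambda=\Z^n$, so the sum defining $c_M$ has exactly one term. For this lattice we take the basis matrix $B_{\mathsf\Lambda}=I_n$, which gives $B_{\mathsf\Lambda}^{-1}M=M$ and therefore $c_M=2\kappa(I_n,M)$.

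\textbf{Step 3: basis independence.} One subtlety is that $B_{\mathsf\Lambda}$ is only a chosen basis. If the constant $\kappa$ is defined through some fixed choice, we must check that $\kappa(B_{\mathsf\Lambda},B_{\mathsf\Lambda}^{-1}M)$ does not depend on that choice. Replacing $B_{\mathsf\Lambda}$ by $B_{\mathsf\Lambda}U$ with $U\in\GL_n(\Z)$ sends the pair $(B,B^{-1}M)$ to $(BU,U^{-1}B^{-1}M)$. The factorisations counted change by the bijection $(A,B')\mapsto(AU,U^{-1}B')$, which preserves the counting problem, so we expect $\kappa$ to be invariant under this change.

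If the definition in \eqref{def:kappa} turns out to be tied to a fixed basis convention, it suffices to choose $I_n$ for $\Z^n$. Either way the constant is $2\kappa(I_n,M)$, and its positivity is inherited from the theorem. Step 3 is the only point needing care, and it is mild. Substituting $c_M=2\kappa(I_n,M)$ into Theorem~\ref{thm:asymptotic_formula} gives the corollary.
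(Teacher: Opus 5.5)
Your proposal is correct and is essentially the paper's argument: $\det M=\pm1$ forces $M\Z^n=\Z^n$, so the only intermediate lattice is $\Z^n$, and taking $B_{\Z^n}=I_n$ in Theorem~\ref{thm:asymptotic_formula} gives $c_M=2\kappa(I_n,M)$. Step~3 is not needed, because the theorem holds for any choice of basis matrix; in fact, since $\tau_n(T,M)$ does not depend on that choice, uniqueness of the asymptotic constant already gives $\kappa(B,B^{-1}M)=\kappa(I_n,M)$ for every $B\in\GL_n(\Z)$, which turns your ``we expect'' into a proof.
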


In Section \ref{2x2} we shall check that this is consistent with \eqref{eq:krieg} in the special case $n=2$ and $M=I_2$.
Turning to uniform upper bounds, 
we are able to prove the following.

\begin{thm}\label{thm:uniform}
Let $n\geq 1$ and let $M\in \cM_n(\Z)$ with $\det M \neq 0$.  Then  
    \[
        \tau_n(T,M) \ll_{\varepsilon} T^{ \lfloor n^2/2 \rfloor +\varepsilon}, 
    \]
for any $\eps>0$, 
where the implied constant depends only on $\varepsilon$ and $n$.
\end{thm}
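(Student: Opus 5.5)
The plan is to turn the count into a count over a single matrix $A$ and then to count $A$ row by row, using the cofactor bounds that $B$ forces on $A$. Given $A$ with $\det A\neq 0$, the factor $B=A^{-1}M$ is determined. Also $\det A\cdot\det B=\det M$, so $\det A$ ranges over the divisors $d$ of $\det M$. Since $|\det M|\ll T^{2n}$, there are only $\ll_\eps T^{\eps}$ possible values of $d$. It therefore suffices to show, uniformly in $d$ and $M$, that
\[
N_d(T)=\#\{A\in\cM_n(\Z;T):\ \det A=d,\ |A^{-1}M|\le T\}\ll_\eps T^{\lfloor n^2/2\rfloor+\eps}.
\]

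\emph{Constraints on $A$.} For $1\le i\le n$ write $a_1,\dots,a_n$ for the rows of $A$, and set $H_i=\|a_1\wedge\dots\wedge a_i\|$, the Euclidean norm of the vector of $i\times i$ minors of the first $i$ rows.
\begin{itemize}
\item Trivially $H_i\ll T^i$.
\item By the identity $AB=M$, the rows of $B$ relate to those of $A$ through $M$. The key input is Jacobi's complementary minor identity. Applied to $A^{-1}=BM^{-1}$, or equivalently to the adjugate relation $\adj(A)M=dB$, it should show that the $(n-i)\times(n-i)$ minors of $B$ control $H_i$ from the other side: one expects $H_i\ll T^{\,n-i}\cdot|d|/|\det M|\cdot(\text{bounded})\le T^{n-i}$ up to factors that the divisor bound absorbs.
\end{itemize}
So heuristically $H_i\ll \min(T^i,T^{n-i})$. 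To make this uniform in $M$, I would work with the lattice $\mathsf\Lambda=A\Z^n\supseteq M\Z^n$ rather than with $M^{-1}$ directly. Then $B$ is the coordinate matrix of $M\Z^n$ inside $\mathsf\Lambda$, and the bound $|B|\le T$ says that the columns of $M$ have small coordinates in the basis given by the columns of $A$.

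\emph{Counting.} I would choose $a_1,a_2,\dots$ successively.
\begin{itemize}
\item Given $a_1,\dots,a_i$ with wedge $w_i$, the next row $a_{i+1}$ lies in the box $|a|\le T$ and satisfies $\|w_i\wedge a\|=H_{i+1}$.
\item Decompose $a$ into its components parallel and orthogonal to $\mathrm{span}(a_1,\dots,a_i)$. The parallel part contributes $\ll T^{i}$ choices, or fewer after lattice refinement. The orthogonal part has norm $H_{i+1}/H_i$ in an $(n-i)$-dimensional space, giving $\ll 1+(H_{i+1}/H_i)^{n-i}$ choices, up to lattice-covolume factors.
\item Dyadically decompose each $H_i$ and sum over the ranges allowed by $H_i\ll\min(T^i,T^{n-i})$.
\end{itemize}
The exponents should telescope. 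The first $\lceil n/2\rceil$ rows essentially contribute their full $T^n$ each, but this must be offset by the decreasing $H_i$ for $i>n/2$. For $i\ge n/2$, each later row then contributes only about $T^{\,n-i-1}\cdot T^{i}\cdot(\text{shrinkage})$, and the totals should add up to $n^2/2$, rounded down. Logarithmic losses from the dyadic sums, and the final row (which is fixed up to $O(1)$ choices by $\det A=d$, given $a_1,\dots,a_{n-1}$), are absorbed into $T^\eps$.

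\emph{Expected difficulty.} The main obstacle is the bookkeeping in the row-by-row count, specifically making the point counts in the slabs sharp. The crude count $T^i(H_{i+1}/H_i)^{n-i}$ ignores the lattice structure; it must be replaced by a successive-minima bound that does not lose powers of $T$ when $H_i$ is small. The uniformity in $M$ must also hold throughout. If the direct approach leaks, I would symmetrise instead: fix the first $\lfloor n/2\rfloor$ rows of $A$ and the last $\lceil n/2\rceil$ columns of $B$. Then I would use the orthogonality relations $a_i\cdot b_j=m_{ij}$ and the divisor bound to show that the remaining entries are determined up to $T^\eps$ choices. Checking that this symmetric choice gives exactly $T^{\lfloor n^2/2\rfloor}$ is where I would expect the real work to lie.
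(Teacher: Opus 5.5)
Your proposal has a genuine gap. The inequality your whole count rests on, $H_i\ll\min(T^i,T^{n-i})$, is false uniformly in $M$, and the factor you call ``(bounded)'' is exactly where it breaks. Writing $A=MB^{-1}$, the first $i$ rows of $A$ are the first $i$ rows $m_1,\dots,m_i$ of $M$ multiplied by $B^{-1}$. Cauchy--Binet and Jacobi's identity then give
\[
H_i\ll \|m_1\wedge\cdots\wedge m_i\|\,\frac{T^{n-i}}{|\det B|}=\|m_1\wedge\cdots\wedge m_i\|\,\frac{|d|\,T^{n-i}}{|\det M|}.
\]
The factor $\|m_1\wedge\cdots\wedge m_i\|$ can be as large as $T^{2i}$, and neither $|d|/|\det M|$ nor the divisor bound controls it. The bound is fine for $M=I_n$, or with $M$-dependent constants, but the theorem requires uniformity. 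Concretely, for $n=3$ the pair $A=M=\mathrm{diag}(T,T,1)$, $B=I_3$ is admissible, yet $H_2=T^2$, not $\ll T$. So your row-by-row scheme discards genuine solutions.

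Once you use the correct $M$-dependent bound, size information alone no longer suffices. Your counting step uses nothing arithmetic beyond $\det A\mid\det M$. For large $|\det M|$ the essential constraint is that $A^{-1}M$ is integral, i.e.\ $M\Z^n\subseteq A\Z^n$. For example, take $n=4$, $M=pI_4$ with $p\asymp T^2$ prime (say $p\approx 2T^2$), and $d=p^2$. The archimedean conditions become $|A|\le T$, $\det A=p^2$ and $|\adj A|\le pT$. These cut out an open region of positive measure (a scaled Hadamard matrix lies inside it), so one expects about $T^{12}$ integer solutions. Only the congruence $\adj A\equiv 0\pmod p$ brings the count down to $T^{8+\eps}$.

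The paper builds the proof around this arithmetic constraint. It splits according to $\mathsf\Lambda=A\Z^n$ and bounds the number of intermediate lattices by $|\det M|^{\lfloor n^2/4\rfloor/n+\eps}$ (Proposition~\ref{prop:latticeinclusion}, via Bhowmik--Wu's subgroup counts). For each $\mathsf\Lambda$ it rescales to $g_{\mathsf\Lambda}=\det(\mathsf\Lambda)^{-1/n}B_{\mathsf\Lambda}$ and $h_M=|\det M|^{-1/n}M$. One then counts $U\in\SL_n(\Z)$ with $|g_{\mathsf\Lambda}U|\le T_1$ and $|(g_{\mathsf\Lambda}U)^{-1}h_M|\le T_2$, where $T_1T_2=T^2|\det M|^{-1/n}$. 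The uniform bound $(T_1T_2)^{\lfloor n^2/4\rfloor+\eps}$ of Theorem~\ref{thm:uniformlatticecount} then saves $|\det M|^{-\lfloor n^2/4\rfloor/n}$, which exactly offsets the number of lattices. That uniform bound is the real content, already for $M=I_n$. The paper proves it through a volume estimate on $\SL_n(\R)$, using the Cartan decomposition and a Markov-type bound with Chikuse's density on the Stiefel manifold. In your proposal the corresponding step is only the hope that ``the exponents should telescope''.

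Two smaller claims are also wrong. First, given $a_1,\dots,a_{n-1}$, the condition $\det A=d$ does not fix the last row up to $O(1)$ choices; it only places $a_n$ on an affine hyperplane. For instance, with $n=2$, $M=I_2$, $a_1=(1,0)$, every $a_2=(x,1)$ with $|x|\le T$ works. Second, your symmetrised fallback does not determine the rest up to $T^{\eps}$. After fixing the first $\lfloor n/2\rfloor$ rows $A_1$ of $A$ and the last $\lceil n/2\rceil$ columns of $B$, the first $\lfloor n/2\rfloor$ columns $B_1$ of $B$ are still only constrained by the linear system $A_1B_1=M_{11}$ (the top-left $\lfloor n/2\rfloor\times\lfloor n/2\rfloor$ block of $M$). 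That leaves a $\lfloor n/2\rfloor\lceil n/2\rceil$-dimensional affine family.
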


Without making further restrictions on $M$, we cannot hope for a 
bound which gets sharper as $|M|$ increases.  
Let  $n=2$ and let $M= m^2 I_2$ be a scalar matrix,  with $m$ a positive integer. Taking $T=2m$, it follows from Theorem \ref{thm:uniform} that 
$ \tau_2(T,m^2I_2) \ll_{\varepsilon} T^{ 2 +\varepsilon}$, where the implied constant only depends on $\varepsilon$. On the other hand, we notice that the equation $A A^\adj = m^2I_2$ holds whenever $\det A = m^2$. 
Thus $ \tau_2(T,m^2I_2)\gg T^{2-\varepsilon}$, since there  are $\gg m^{2-\varepsilon}$ solutions to the equation $ad-bc = m^2$,  with $a,b,c,d \in \ZZ\cap [-2m,2m]$, as follows from an application of the circle method 
\cite[Theorem~4]{HB}, for example. 
The authors are grateful to Victor Wang for this remark.

Our proof of Theorem \ref{thm:uniform} relies on  decomposing the set of all factorisations $AB =M$ according to the lattice $\mathsf\Lambda=A\Z^n$.
In Section~\ref{section:intermediate_lattices}, we give an upper bound for the number of different lattices $\mathsf\Lambda$ with $M\Z^n \subseteq \mathsf\Lambda \subseteq \Z^n$. Next, in  Section~\ref{section:boundingnumberofbases}, we prove an upper bound for the number of possible choices of $A$ with $A\Z^n = \mathsf\Lambda$, $|A|\le T$ and  $|A^{-1}M|\le T$,
where we recall that 
$|A|$ is used to denote the supremum norm of all the entries of a real $n\times n$ matrix $A$.
Finally, in Section~\ref{section:uniform_proof}, we combine these two bounds to prove Theorem~\ref{thm:uniform}. 

\medskip

We proceed to record some estimates for  
$\tau_n(T,M)$ when  $M\in \mathcal{M}_n(\ZZ)$ is  singular. The following lower bound  will be deduced from Theorem \ref{thm:asymptotic_formula} in Section \ref{sec:lower}.

  \begin{thm}\label{thm:lowerboundrankr}
  Let $M \in \mathcal{M}_n(\Z)$ have  rank $r$. Then
    \[
    \tau_{n}(T,M) \gg_M T^{\lfloor r^2/2 \rfloor + n(n-r)}.
    \]
    \end{thm}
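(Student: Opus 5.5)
Reduce to a normal form by unimodular changes of variables. By Smith normal form, write $M = U D V$ with $U,V\in \GL_n(\Z)$ and $D=\mathrm{diag}(d_1,\dots,d_r,0,\dots,0)$, where $d_1\cdots d_r\neq 0$. If $AB=D$ with $|A|,|B|\le T$, then $(UA)(BV)=M$ and $|UA|,|BV|\ll_M T$. The map $(A,B)\mapsto(UA,BV)$ is injective, so $\tau_n(T,M)\ge \tau_n(cT,D)$ for some constant $c=c_M>0$. It therefore suffices to treat $M=D$.

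Construct factorisations in block form. Write $D=\begin{pmatrix} D' & 0\\ 0 & 0\end{pmatrix}$ with $D'=\mathrm{diag}(d_1,\dots,d_r)$ non-singular of size $r$. Take
\[
A=\begin{pmatrix} A' & 0\\ X & Y\end{pmatrix},\qquad B=\begin{pmatrix} B' & Z\\ 0 & 0\end{pmatrix},
\]
where $A',B'\in\cM_r(\Z)$ satisfy $A'B'=D'$, while $X$ is $(n-r)\times r$, $Y$ is $(n-r)\times(n-r)$ and $Z$ is $r\times(n-r)$, all arbitrary of height at most $T$. Multiplying out gives
\[
AB=\begin{pmatrix} A'B' & A'Z\\ XB' & XZ\end{pmatrix}.
\]
This is not yet $D$, so the construction must be adjusted to kill the off-diagonal blocks.

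A cleaner choice is to keep $Y$ free and set $X=0$ and $Z=0$. Then $A=\begin{pmatrix} A'&0\\0&Y\end{pmatrix}$ and $B=\begin{pmatrix}B'&0\\0&0\end{pmatrix}$, so $AB=D$. The free parameters are $Y$, with $(n-r)^2$ entries, and $(A',B')$, counted by $\tau_r(T,D')$. That gives only $(n-r)^2$ free entries, short of the target $n(n-r)=(n-r)^2+r(n-r)$. The missing $r(n-r)$ must come from a block that does not interact with $D'$.

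Use instead $B=\begin{pmatrix} B' & 0\\ W & W'\end{pmatrix}$ with $A=\begin{pmatrix}A'&0\\X&0\end{pmatrix}$. Then
\[
AB=\begin{pmatrix}A'B'&0\\ XB'&0\end{pmatrix}.
\]
We need $XB'=0$, which forces $X=0$ since $B'$ is non-singular. The bottom block rows $(W,W')$ of $B$, containing $(n-r)n$ entries, are completely free. This gives at least
\[
\tau_r(T,D')\cdot(2\lfloor T\rfloor+1)^{n(n-r)}
\]
pairs with $AB=D$, all distinct.

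Now invoke Theorem~\ref{thm:asymptotic_formula} with $n=r$ and $M=D'$ non-singular. It gives $\tau_r(T,D')\gg_{D'} T^{\lfloor r^2/2\rfloor}$ for $r\ge2$. For $r=1$ we simply need $\tau_1(T,d_1)\ge 1$ once $T\ge|d_1|$, which matches $\lfloor 1/2\rfloor=0$. For $r=0$ we have $M=O_n$, the factor from $D'$ is absent, and the count is the $T^{n^2}$ choices of $B$ with $A=0$. Combining these yields $\tau_n(T,D)\gg_M T^{\lfloor r^2/2\rfloor+n(n-r)}$, and with the first reduction this proves the theorem.

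The only real input is the asymptotic for non-singular matrices (Theorem~\ref{thm:asymptotic_formula}). The main care needed is in the block construction: placing the free entries so that they do not disturb the product, which is why they go in the bottom rows of $B$ rather than in $A$.
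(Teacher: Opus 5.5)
Your argument is correct and is essentially the paper's proof: you reduce via Smith normal form to $D=\mathrm{diag}(d_1,\dots,d_r,0,\dots,0)$ and take $A=\begin{pmatrix}A'&0\\0&0\end{pmatrix}$, $B=\begin{pmatrix}B'&0\\W&W'\end{pmatrix}$ with $A'B'=D'$ and $W,W'$ free. Theorem~\ref{thm:asymptotic_formula} supplies $\tau_r(T,D')\gg T^{\lfloor r^2/2\rfloor}$ for $r\ge 2$, and $r\in\{0,1\}$ are handled by hand, exactly as in the paper; the two abandoned block constructions in the middle of your write-up can simply be deleted.
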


This result shows that the restriction 
to non-singular matrices $M$
 is essential in Theorem \ref{thm:uniform}. In the special case $M=O_n$, 
where $O_n$ is the $n\times n$ zero matrix, we are also able to prove 
the following asymptotic formula  in Section \ref{sec:On}.

\begin{thm}\label{thm:factoring_zero}
    Let $n \ge 2$. Then
    \begin{align*}
       \tau_n(T,O_n) = c_{O_n} T^{n^2} +O(T^{n^2-1}),
    \end{align*}
    where $c_{O_n}$
    is a positive constant given in \eqref{c_n}.
\end{thm}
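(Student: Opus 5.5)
We would prove Theorem~\ref{thm:factoring_zero} by stratifying the pairs $(A,B)$ according to the rational kernel of $A$. For a pair with $AB=O_n$, let $\mathsf L=\ker_{\Q}(A)\cap\Z^n$. This is a primitive sublattice of some rank $k\in\{0,\dots,n\}$. The columns of $B$ lie in $\mathsf L$, and the rows of $A$ lie in the dual primitive lattice $\mathsf L^{\perp}=\{\x\in\Z^n:\x\cdot\y=0\ \forall \y\in \mathsf L\}$, which has rank $n-k$ and $\det \mathsf L^\perp=\det\mathsf L$. Conversely, any $A$ whose rows span $\mathsf L^\perp\otimes\Q$, together with any $B$ whose columns lie in $\mathsf L$, gives $AB=O_n$. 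Hence
\[
\tau_n(T,O_n)=\sum_{\mathsf L}N^*(\mathsf L^\perp,T)\,N(\mathsf L,T)^n ,
\]
where the sum is over primitive sublattices $\mathsf L\subseteq\Z^n$ and we use the following notation:
\begin{itemize}
\item $N(\mathsf L,T)=\#\{\y\in \mathsf L:|\y|\le T\}$;
\item $N^*(\mathsf L^\perp,T)$ counts $n$-tuples of vectors of $\mathsf L^\perp$ in the box that span $\mathsf L^\perp\otimes\Q$.
\end{itemize}
The terms $k=0$ (where $B=O_n$) and $k=n$ (where $A=O_n$) contribute $2(2T+1)^{n^2}+O(1)$.

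For the intermediate ranks $1\le k\le n-1$, we would use the standard lattice point estimate for a rank-$k$ lattice $\mathsf L$ with successive minima $\lambda_1\le\dots\le\lambda_k$:
\[
N(\mathsf L,T)=\frac{\vol(\mathsf L\otimes\R\cap[-T,T]^n)}{\det\mathsf L}+O\Big(\sum_{j<k}\frac{T^j}{\lambda_1\cdots\lambda_j}\Big),
\]
with the whole quantity equal to $1$ once $T<\lambda_1$. The spanning condition in $N^*$ removes only those tuples lying in a proper sublattice. We expect this to cost a relative factor $O(\lambda_{n-k}(\mathsf L^\perp)/T)$, which is lower order. The main term for fixed $\mathsf L$ is then
\[
T^{n(n-k)}\,T^{nk}\,\frac{\vol_{n-k}(\mathsf L^\perp_\R\cap[-1,1]^n)^n\,\vol_k(\mathsf L_\R\cap[-1,1]^n)^n}{(\det\mathsf L)^{2n}} = T^{n^2}\,\frac{c(\mathsf L)}{(\det \mathsf L)^{2n}},
\]
where $c(\mathsf L)$ is bounded above and below in terms of $n$. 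We would then set
\[
c_{O_n}=2^{n^2+1}+\sum_{1\le k\le n-1}\ \sum_{\rank\mathsf L=k}\frac{c(\mathsf L)}{(\det\mathsf L)^{2n}} .
\]
This series converges because there are $\asymp H^n$ primitive rank-$k$ sublattices of $\Z^n$ with determinant at most $H$ (Schmidt), and $\sum_H H^{n-1}H^{-2n}<\infty$.

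The main obstacle is controlling the error terms uniformly over all $\mathsf L$, including lattices with $\det\mathsf L$ large compared with $T$, or with very unbalanced successive minima. Here the main term must be discarded and replaced by a crude bound. First, we would expand the product
\[
N^*(\mathsf L^\perp,T)N(\mathsf L,T)^n-T^{n^2}c(\mathsf L)(\det\mathsf L)^{-2n}
\]
and bound each cross term by $T^{n^2-1}$ times a product of the form $\prod_j \lambda_j(\mathsf L)^{-e_j}\prod_i\lambda_i(\mathsf L^\perp)^{-f_i}$, where every minimum appears with exponent at least $1$ overall. Second, we would use the relation between the minima of $\mathsf L$ and $\mathsf L^\perp$, namely $\det\mathsf L^\perp=\det\mathsf L$ together with Minkowski's second theorem. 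Third, we would apply Schmidt's counting of primitive sublattices with prescribed successive minima (roughly $\prod\lambda_i^{\,n}$ lattices with minima up to $\lambda_i$) to show that the resulting sums converge. For lattices with $\lambda_1(\mathsf L)>T$ or $\lambda_1(\mathsf L^\perp)>T$, one factor is trivial. Their total contribution would be bounded by summing the tail $\det\mathsf L\gg T$ of the convergent series above, which gives $O(T^{n^2-1})$ provided we retain one extra power of $\det\mathsf L$ in the decay. We expect the delicate point to be the borderline exponents, where a logarithmic loss might appear. If it does, we would refine the estimate with the sharper sublattice counts that are uniform in all successive minima.
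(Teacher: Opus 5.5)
Your stratification is the paper's. Indexing by $\mathsf L=\ker_{\Q}(A)\cap\Z^n$ is the same as indexing by the row lattice $\Lambda=\mathsf L^\perp$. Your main term, your constant $c(\mathsf L)=\nu_{\mathsf L}^n\nu_{\mathsf L^\perp}^n$, and the convergence argument via Schmidt's count all agree with \eqref{c_n}. The gap is the uniform error analysis, which you leave as an expectation, and two concrete claims you make there are wrong.

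\emph{The extreme strata.} The stratum $k=0$ consists of the pairs $(A,O_n)$ with $A$ \emph{invertible}. Its size is therefore $(2\lfloor T\rfloor+1)^{n^2}$ minus the number of singular matrices of height at most $T$, which is of size $T^{n^2-n}\log T$ (Katznelson). So the error is not $O(1)$: you need a bound $O(T^{n^2-1})$ for singular matrices, either Katznelson's or an elementary induction on $n$ that expands $\det A$ in $a_{11}$.

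\emph{Lattices with $\lambda_1(\mathsf L)>T$.} Here $\det\mathsf L\gg T^k$ and the contribution is $N^*(\mathsf L^\perp,T)$. Its natural bound $T^{n(n-k)}(\det\mathsf L)^{-n}$ exceeds the main term $T^{n^2}(\det\mathsf L)^{-2n}$ by the factor $(\det\mathsf L)^nT^{-nk}\gg1$. So these lattices are not controlled by the tail of your convergent series. You must sum them directly over $\det\mathsf L\ll T^{n-k}$, which gives $O(T^{n(n-k)}\log T)$. This is acceptable only because $n(n-k)\le n^2-n$.

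\emph{The spanning condition.} The relative loss $O(\lambda_{n-k}(\mathsf L^\perp)/T)$ is something you must prove. It is Lemma~\ref{lem:gammabound}, and it uses $\lambda_{n-k}(\mathsf L^\perp)\ll T$.

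The paper closes the error analysis with less machinery than you propose. No counts of lattices with prescribed successive minima are needed, and no logarithm is dangerous. Write $r=n-k$.
\begin{itemize}
\item Only $\Lambda=\mathsf L^\perp$ with $N^*(\Lambda,T)\neq0$ matter. These satisfy $\lambda_r(\Lambda)\le\sqrt n\,T$ and $\det\Lambda\ll T^r$ (Lemma~\ref{lem:condition_alphanonzero_detsize}).
\item For such $\Lambda$ one uses the sharp error $T^{r-1}\lambda_r/\det\Lambda$, together with $\lambda_r\ll\min(\det\Lambda,T)$.
\item For $\Lambda^\perp=\mathsf L$ one uses only the crude error $O(T^{k-1})$, which follows from $\lambda_i\ge1$. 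This needs no hypothesis on the minima of $\mathsf L$, so it covers your case $\lambda_1(\mathsf L)>T$ automatically.
\item After expanding the product and using $\det\Lambda^\perp=\det\Lambda$, every error term has the shape $T^a(\det\Lambda)^{-s}$. Each is summed using only $\#\{\Lambda:\det\Lambda\le H\}\ll H^n$ and a dyadic decomposition (Lemma~\ref{lem:inverse_determinant_bound}).
\item The borderline exponent $s=n$ produces only $T^{n^2-n}\log T$. Truncating the series for the constant at $\det\Lambda\gg T$ costs $O(T^{n^2-n})$.
\end{itemize}
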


Note that this is consistent with the lower bound in  Theorem \ref{thm:lowerboundrankr} when $r=0$.
The  proof 
of Theorem~\ref{thm:factoring_zero} will be adapted from arguments  of Katznelson \cite{katznelson} on counting singular matrices in $\mathcal{M}_n(\Z)$.

\medskip

Our investigation of  the matrix divisor function $\tau_n(T,M)$ is  inspired by recent work  of 
Afifurrahman~\cite{Afif}, who was the first  to consider counting matrices  $A_1,\ldots,A_m\in\cM_n(\Z;T)$ satisfying a multiplicative equation
$
A_1\cdots A_m=M,
$
for some fixed integer matrix $M$.  Taking $m=2$ and $n\geq 2$ in his work, it  follows from \cite[Theorem~2.1]{Afif} that 
$$
\tau_n(T,M)\ll_\eps
\begin{cases}
T^{n^2-n+\eps} & \text{ if $\det M\neq 0$,}\\
T^{n^2+\eps} & \text{ if $\det M= 0$,}
\end{cases}
$$
for any $\eps>0$. The second estimate is essentially tight, as follows from 
Theorem \ref{thm:factoring_zero}. The first estimate is tight when $n=2$, but worse than our Theorem \ref{thm:uniform} for all $n\geq 3$.

\begin{remark}
Given $M\in \mathcal{M}_n(\ZZ)$ it is also natural to study a version of 
\eqref{eq:divisor} in which non-square matrices are allowed. 
Letting $\mathcal{M}_{a\times b}(\ZZ)$ denote the set of $a\times b$ matrices with integer coefficients, this would involve the quantity
$$
\tau_{n,m}(T,M)=\#\left\{(A,B)\in \mathcal{M}_{n\times m}(\ZZ)\times 
\mathcal{M}_{m\times n}(\ZZ): |A|,|B|\leq T, ~AB=M\right\},
$$
for $T\geq 1$.  It would be interesting to see what analogues of Theorems \ref{thm:asymptotic_formula}, \ref{thm:uniform} 
or \ref{thm:factoring_zero}
are possible in this setting. 
\end{remark}

\begin{remark}\label{rem:nonabeliancircle}
In recent work of Arala et al.\ \cite{arala}, an asymptotic formula is obtained for the number of 
zeros of the Diophantine equation
$$
 \gamma_1^2\pm \cdots \pm \gamma_m^2=0,
$$
for $m\geq 9$ and an arbitrary choice of signs, where $\gamma_1,\dots,\gamma_m$ run through Hurwitz quaternions of bounded norm.  It would be desirable to obtain similar results when the variables run over elements of $\mathcal{M}_n(\ZZ;T)$. Following a strategy of Birch--Davenport--Lewis \cite{BDL}, it is plausible that strong uniform upper bounds for $\tau_n(T,M)$ could help with 
understanding the density of solutions to equations of the form 
$$
A_1B_1 + \dots + A_mB_m = 0,
$$
where $A_i,B_i$ run over elements of $\mathcal{M}_n(\ZZ;T)$.
There is a direct connection with Remark~\ref{rem:nonabeliancircle}.  Indeed, on putting
\[
 \mathcal A=(A_1\ \cdots\ A_m)\in M_{n\times mn}(\mathbb Z),
 \qquad
 \mathcal B=
 \begin{pmatrix}
 B_1\\ \vdots\\ B_m
 \end{pmatrix}
 \in M_{mn\times n}(\mathbb Z),
\]
we have
$ \mathcal A\mathcal B=A_1B_1+\cdots+A_mB_m$, so that  the number of solutions under consideration is precisely
$\tau_{n,mn}(T,O_n)$.  Moreover, a direct application of work by Rydin Myerson
\cite[Theorem~1.2]{simon}  on systems of quadratic forms, 
gives an asymptotic formula 
for this counting problem whenever $m>4n$.  
It would be very interesting to determine whether 
the range $m>4n$ is improvable by working directly with matrices. 
\end{remark}

\begin{ack}
The authors are very grateful to 
Igor Shparlinski and Victor Wang for useful comments on an earlier draft, and to 
Fran\c{c}ois
 Maucourant  for his explanations about applying his work 
\cite{Mau} to the situation where $n\geq 4$.
During the preparation of this paper,  A.O. was supported by the Australian Research Council Grant FT250100208.
\end{ack}

\begin{AI}
Generative AI tools were used to suggest proof outlines for
Theorems~\ref{thm:asymptotic_formula} and~\ref{thm:uniform}.
Specifically, Claude Opus 4.8 and GPT-5.5 Pro were used in connection
with Theorem~\ref{thm:asymptotic_formula}, and GPT-5.6 Sol Pro was
used in connection with Theorem~\ref{thm:uniform}. All arguments
were independently checked by the authors, who take full
responsibility for the contents of the paper. The final text was
written by the authors.
\end{AI}

\section{Preliminaries} 

\subsection{Notation and background }
The height $|A|$ of a  matrix $A \in \cM_n(\R)$ is 
the supremum norm of all its  entries.
For any matrices $M,L \in \mathcal{M}_n(\RR)$, we recall the basic inequalities
\begin{align}\label{eq:basic_infty_norm_inequ}
    | M + L | \le  | M| +
    | L| \quad
    \text{ and } \quad
    | M  L| \le  n| M | 
    | L|.
\end{align}
The  second inequality also gives $(n|L^{-1}|)^{-1}
    | M | 
    \le 
    | M  L| $ for invertible $L$.

We denote by $\vol$ the Haar measure on 
$\SL_n(\R)$, normalised in such a way that 
\begin{align}\label{eq:siegelnormalisation}
    \vol (\SL_n(\R) / \SL_n(\Z)) = 1.
\end{align}
This measure may be obtained from the Lebesgue measure $\mu$ on $\mathcal{M}_{n}(\R) \cong \R^{n^2}$ by setting 
\[
    \vol(S) =
    \frac{
    \mu 
    \left( 
    \{
    \lambda M : M \in S, \; \lambda \in (0,1) 
    \}
    \right)
    }{\frac{1}{n} \prod_{j=2}^n \zeta (j)},
\]
whenever $\{
    \lambda M: M \in S, \; \lambda \in (0,1) 
    \}$ is Lebesgue-measurable. This, up to normalisation, is Siegel's construction \cite[\S3]{siegemvt}; a  contemporary exposition is provided in \cite[\S2]{volumesgilletgrayson}. 
    Denoting by $\mathcal{F}$ any measurable fundamental region for the action of $\SL_n(\Z)$ on $\SL_n(\R)$, Siegel's formula \cite[Lecture XV]{siegel} gives
$$
    \mu \left( 
    \{
    \lambda F : F \in \mathcal{F}, \; \lambda \in (0,1) 
    \}
    \right) = 
    \frac{1}{n} \prod_{j=2}^n \zeta (j),
$$
which yields \eqref{eq:siegelnormalisation}. Concretely, Siegel computes the volume of the set 
$$
\{M \in \mathcal{A} : |\det(M)| \le 1 \}, 
$$
where $\mathcal{A}$ is a measurable fundamental domain for the action of $\GL_n(\Z)$ on the set of invertible matrices $\{M \in \mathcal{M}_n(\R): \det M \neq 0 \}$. As $\GL_n(\Z)$ contains matrices $U$ of determinant $\det(U)=-1$, which can flip determinant signs, one may choose a fundamental domain where all matrices are of positive determinant; here, two matrices are equivalent under the action of $\GL_n(\Z)$ if and only if they are already equivalent under the action of $\SL_n(\Z)$. 
We observe that such a fundamental region $\mathcal{A}$ is given by $\{\lambda F : F \in \mathcal{F}, \: \lambda > 0 \}$.

 We recall that every $g \in \SL_n(\R)$ has a Cartan decomposition as
    $g = k_1 a_{\mathbf{t}} k_2$, where
    \begin{align}
    \label{eq:cartan_decomp}
         k_1, k_2 \in \SO_n(\R) =:K, \quad a_{\mathbf{t}} = \mathrm{diag}(e^{t_1}, \ldots, e^{t_n}), \quad \mathbf{t} = (t_1,\ldots, t_n) \in H^{+}, 
    \end{align}
    with     \begin{align}\label{eq:defH+}
  H^{+} := 
 \left \{
  \mathbf{t} \in \R^n : t_1 \ge \cdots \ge t_n , \ \sum_{i=1}^n t_i = 0
 \right  \} \ \subseteq \
 \left \{ 
         \mathbf{t} \in \R^n : \sum_{i=1}^n t_i = 0
         \right \} =: H .
     \end{align} 
     The matrices $k_1, k_2$ are not uniquely determined by $g$, whereas the matrix $a_{\mathbf{t}}$, a
  diagonal matrix with decreasing positive entries along the diagonal and determinant 1, is uniquely determined by $g$. We can interpret the  Haar measure on $\SL_n(\R)$ as 
  the pushforward under the map $K \times H^{+} \times K \to \SL_n(\R),  \ (k_1, \mathbf{t}, k_2) \mapsto k_1 a_{\mathbf{t}} k_2$ of the measure 
  \begin{align}\label{eq:cartanvol}
  c_n \prod_{i<j} \sinh(t_i - t_j) \;
      \mathrm{d}k_1
      \mathrm{d} \mathbf{t}
      \mathrm{d}k_2
      , \qquad
      c_n =  
      \frac{
      2^{\binom{n}{2}+n-1} \pi^{\frac{n(n+1)}{2}-1}
      }{ \prod_{j=2}^n
       (\Gamma(\tfrac{j}{2})^{2} \zeta(j))
      },
  \end{align} 
   where $\mathrm{d}k_1$, $\mathrm{d}k_2$ are Haar probability measures on $K$, $\mathrm{d}\mathbf{t}$ is given by the push-forward of the Lebesgue measure on $\R^{n-1}$ under the parametrisation $-t_n = t_1 + \cdots + t_{n-1}$, and $c_n$ is the normalisation constant consistent with \eqref{eq:siegelnormalisation}. 
   All of this is explained in Helgason \cite[Theorem~5.8]{helgasongroupsandgeom} for the Cartan decomposition of more general $G$, 
   or by Forrester
   \cite[Sections~2.1--2.2, especially (2.6), (2.8) and (2.14)]{cartandecomp}
   for the specialisation to $G = \SL_n(\R)$ and the value of $c_n$.

For invertible matrices $X, Y \in \mathcal{M}_{n}(\Z)$, we set
\begin{align}\label{def:kappa}
    \kappa(X,Y) = \lim_{T \to \infty} 
   T^{-\lfloor n^2/2 \rfloor} \vol (E_{X,Y}(T)),
\end{align}
where
\begin{align}\label{eq:defxy}
    E_{X,Y}(T) = 
    \{
    g \in \SL_n(\R) :  | X g | \le T , | g^{-1} Y| \le T
    \}.
\end{align}
In Lemma \ref{lem:volumecomputation} we shall prove that  $\kappa(X,Y)$ exists and is a positive constant.

\subsection{Lower bounds}\label{sec:lower}

We begin by proving the easy lower bound \eqref{eq:lower-easy},
before 
turning to the proof  of Theorem~\ref{thm:lowerboundrankr}.
Let  $n\geq 2$ and 
write $n=r+s$, with $r=\lfloor n/2\rfloor$ and $s=\lceil n/2\rceil$. 
For any $r\times s$ matrix $X$ with integer coefficients
and height $|X|\leq T$, consider the matrix
$$
A_X=
\begin{pmatrix} I_r & X\\ 0 & I_s
\end{pmatrix}.
$$
Then clearly $A_X\in \SL_n(\mathbb Z)$ and
$$
A_X^{-1}=
\begin{pmatrix} I_r & -X\\ 0 & I_s
\end{pmatrix}.
$$
Thus $A_X$ and $A_X^{-1}$ both have height at most $T$, so that
$$
\tau_n(T)\ge (2\lfloor T\rfloor+1)^{rs}\gg T^{\lfloor n^2/4\rfloor},
$$
as claimed in \eqref{eq:lower-easy}.

\begin{proof}[Proof of Theorem~\ref{thm:lowerboundrankr}]
Let $M \in \mathcal{M}_n(\Z)$ be a matrix of rank $r$. 
If $r=0$ take $A=O_n$ and $B$ arbitrary to get the lower bound 
$\tau_n(T,M)\gg T^{n^2}$. 
Suppose now that $r\geq 1$.
Then, by the existence of the Smith normal form, there exist invertible matrices $U,V \in \GL_n(\Z)$ such that $UMV$ is a diagonal matrix of the form
\[
    U M V = \mathrm{diag}(d_1, \ldots, d_r, 0, \ldots, 0),
\]
where $d_1, \ldots, d_r \neq 0$. Every factorisation $UMV = AB$ provides a factorisation of the form $M = (U^{-1}A)(BV^{-1})$ and vice versa. Moreover, as $M$ is fixed, so are $U$ and $V$. 
Thus  $|A| \ll |U^{-1}A| \ll |A|$ and $|B| \ll |BV^{-1}| \ll |B|$. It follows that there exists a constant $k>0$ depending on $M$ such that 
\begin{equation}\label{eq:comparisontaum_smithnormalform}
       \tau_n(T,M) \ge \tau_n(kT,UMV).
\end{equation}

Let us assume that there are integer matrices $L, R\in \mathcal{M}_r(\Z)$ such that $LR = \mathrm{diag}(d_1,\ldots,d_r)$. Then, for any $(n-r)\times r$ matrix $X_1$ and any $(n-r)\times (n-r)$ matrix $X_2$, we have 
\[
    \begin{pmatrix}
        L & O_{r,n-r} \\
        O_{n-r,r} & O_{n-r}
    \end{pmatrix}
    \begin{pmatrix}
        R & O_{r,n-r} \\
        X_1 & X_2
    \end{pmatrix} 
    = \mathrm{diag}(d_1,\ldots,d_r,0,\ldots,0).
\]
Here $O_{i,j}$ denotes the $i\times j$ zero matrix, and we write $O_{i,i}=O_i$.
If $r=1$, then there are $\gg 1 = T^{\lfloor r^2 /2 \rfloor}$ choices for $(L,R)$, since one may for instance take $(L,R) = (d_1,1)$ for sufficiently large $T \ge |d_1|$; if $r\ge 2$, then there are $\gg T^{\lfloor r^2/2 \rfloor}$ choices for $(L,R)$ by Theorem~\ref{thm:asymptotic_formula}. Moreover, there are $\gg T^{n(n-r)}$ choices for $X_1, X_2$. 
Therefore, there are 
\[
   \tau_{n}(T,UMV) \gg T^{\lfloor r^2/2 \rfloor + n(n-r)}
\]
 factorisations $A'B' = UMV$, with $|A'|, |B'| \le T$.
The claim now follows by \eqref{eq:comparisontaum_smithnormalform}.
\end{proof}

\section{Lattice point counting on
\texorpdfstring{$\SL_n(\R)$}{SL\_n(R)}}\label{section:latticepts}

 Recall that a group $G$, equipped with a topology $\mathcal{T}$,
is called {\it second countable} if $\mathcal{T}$ has a countable basis, and that it is called
{\it locally compact} if $\mathcal{T}$ is Hausdorff and every $g \in G$ has a compact neighbourhood. 
The main tool for the proof of Theorem~\ref{thm:asymptotic_formula} is a lattice point counting result due to Gorodnik and Nevo \cite{gorodnik_nevo} for
locally compact second countable groups.
Throughout this section, we fix a locally compact second countable group $G$ with a Haar measure $m_G$ and a lattice $\Gamma \subset G$. Furthermore, we let  $(B_T)_{T>0}$ be a family of bounded Borel subsets of $G$ of positive finite Haar measure and write 
$$
    \beta_T = m_G(B_T)^{-1} \mathbf{1}_{B_T} \mathrm{d}m_G
$$
for the normalised Haar measure on $B_T$. We let $(\mathcal{O}_\varepsilon)_{\varepsilon > 0}$ be a symmetric family of neighbourhoods of the identity in $G$, meaning that $g\in \mathcal{O}_\varepsilon \Leftrightarrow g^{-1}\in \mathcal{O}_\varepsilon$, 
 with the properties that 
\begin{itemize}
\item[(i)] $\mathcal{O}_{\varepsilon_1} \subset \mathcal{O}_{\varepsilon_2}$ if $\varepsilon_1 < \varepsilon_2$;
\item[(ii)] 
the projection $\mathcal{O}_{\varepsilon}  \mathcal{O}_{\varepsilon} \to G / \Gamma, g \mapsto [g]_{G/ \Gamma}$, is injective for all sufficiently small $\varepsilon$, where
\begin{equation}\label{eq:OO}
\mathcal{O}_{\varepsilon}  \mathcal{O}_{\varepsilon} = \{g_1 g_2: g_1, g_2 \in \mathcal{O}_\varepsilon \},
\end{equation}
 and $[g]_{G/ \Gamma}$ is the equivalence class of $g \in G$ under right-multiplication by $\Gamma$.
\end{itemize}

Our application concerns the locally compact second countable group $\SL_n(\R)$ with the lattice $\SL_n(\Z)$. The rôle of the family of sets $(B_T)_{T>0}$ will be played by the family $(E_{X,Y}(T))_{T>0}$  defined in \eqref{eq:defxy}; we introduce our specific choice of $(\mathcal{O}_\varepsilon)_{\varepsilon>0}$ below in \eqref{eq:nbh_id}.  The main goal of this section is to derive the following result from 
Gorodnik--Nevo \cite{gorodnik_nevo}.

\begin{thm} \label{lem:maincountinglemma}
    Let $n\ge 2$, let  $X, Y \in \mathcal{M}_{n}(\Z)$ be such that $\det X\neq 0$ and $\det Y\neq 0$,  and let the sets $(E_{X,Y}(T))_{T>0}$ be  defined as in \eqref{eq:defxy}. Then 
    \[
    \# (E_{X,Y}(T) \cap \SL_n(\Z)) \sim \kappa(X,Y) T^{\lfloor
    n^2/2 \rfloor} 
    \]
    as $T \to \infty$.
\end{thm}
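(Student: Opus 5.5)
The plan is to apply the Gorodnik--Nevo lattice point counting theorem for $G=\SL_n(\R)$, $\Gamma=\SL_n(\Z)$, $B_T=E_{X,Y}(T)$. Their result says that if the family $(B_T)$ is \emph{well-rounded}, then $\#(B_T\cap\Gamma)\sim m_G(B_T)/m_G(G/\Gamma)$. Well-roundedness means that for every $\delta>0$ there is $\varepsilon>0$ such that, for all large $T$,
\[
m_G(\mathcal{O}_\varepsilon B_T^{+}\mathcal{O}_\varepsilon)\le(1+\delta)\,m_G(\mathcal{O}_\varepsilon B_T^{-}\mathcal{O}_\varepsilon)
\]
in the two-sided version. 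With the normalisation \eqref{eq:siegelnormalisation}, the denominator is $1$. The conclusion then follows at once from the definition \eqref{def:kappa} of $\kappa(X,Y)$, together with Lemma~\ref{lem:volumecomputation}, which gives existence and positivity of the limit and hence $\vol(E_{X,Y}(T))\sim\kappa(X,Y)T^{\lfloor n^2/2\rfloor}$. The work therefore reduces to verifying the hypotheses, chiefly well-roundedness.

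First I would fix the neighbourhoods
\[
\mathcal{O}_\varepsilon=\{g\in\SL_n(\R): |g-I_n|<\varepsilon,\ |g^{-1}-I_n|<\varepsilon\}.
\]
This family is symmetric and monotone. For small $\varepsilon$ it injects into $\SL_n(\R)/\SL_n(\Z)$ from $\mathcal{O}_\varepsilon\mathcal{O}_\varepsilon$, because $\SL_n(\Z)$ is discrete: $g_1\gamma=g_2$ with $g_i\in\mathcal{O}_\varepsilon\mathcal{O}_\varepsilon$ forces $\gamma$ close to $I_n$, hence $\gamma=I_n$. Next, perturbation is controlled by \eqref{eq:basic_infty_norm_inequ}. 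If $h_1,h_2\in\mathcal{O}_\varepsilon$ and $g\in E_{X,Y}(T)$, then
\[
|Xh_1gh_2|\le |Xg|+|X(h_1-I)g|+|Xh_1g(h_2-I)|.
\]
The middle term is awkward because $h_1$ sits between $X$ and $g$. I would handle it by writing $Xh_1g=(Xh_1X^{-1})Xg$ and noting $|Xh_1X^{-1}-I|\ll_X\varepsilon$. This gives $|Xh_1gh_2|\le(1+C_X\varepsilon)T$. The same argument applies to $h_2^{-1}g^{-1}h_1^{-1}Y$ via $Y^{-1}h_1^{-1}Y$. Hence
\[
\mathcal{O}_\varepsilon E_{X,Y}(T)\mathcal{O}_\varepsilon\subseteq E_{X,Y}((1+C\varepsilon)T),
\]
and symmetrically $E_{X,Y}((1-C\varepsilon)T)\subseteq\bigcap_{h_1,h_2\in\mathcal{O}_\varepsilon}h_1E_{X,Y}(T)h_2$, where $C$ depends only on $n,X,Y$.

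Well-roundedness then reduces to the ratio estimate
\[
\frac{\vol(E_{X,Y}((1+C\varepsilon)T))}{\vol(E_{X,Y}((1-C\varepsilon)T))}\le 1+\delta
\]
for large $T$. This is exactly where the polynomial asymptotic from Lemma~\ref{lem:volumecomputation} enters: the ratio tends to $\bigl(\tfrac{1+C\varepsilon}{1-C\varepsilon}\bigr)^{\lfloor n^2/2\rfloor}$, which is at most $1+\delta$ once $\varepsilon$ is small. I would also check that each $E_{X,Y}(T)$ is bounded, Borel and of positive finite measure. Boundedness holds because $|g|\le n|X^{-1}||Xg|$, and positivity holds because $E_{X,Y}(T)$ contains a neighbourhood of $I_n$ for large $T$.

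The main obstacle is matching the precise form of Gorodnik--Nevo's hypotheses. Their theorem needs $\Gamma$ to be a lattice in a group with the appropriate mixing or ergodic property; for $\SL_n(\R)$ acting on $\SL_n(\R)/\SL_n(\Z)$ this is Howe--Moore, which I would cite. It also needs their notion of well-roundedness, possibly phrased with admissible or Lipschitz conditions, and I would verify that the sandwich above gives exactly that. Everything quantitative is carried by Lemma~\ref{lem:volumecomputation}.
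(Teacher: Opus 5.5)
Your proposal is correct and follows essentially the paper's route. It uses the same neighbourhoods $\mathcal{O}_\varepsilon$ and the sandwich $E_{X,Y}((1-C\varepsilon)T)\subseteq (E_{X,Y}(T))^{-}(\varepsilon)$, $(E_{X,Y}(T))^{+}(\varepsilon)\subseteq E_{X,Y}((1+C\varepsilon)T)$ together with Lemma~\ref{lem:volumecomputation} for well-roundedness. It then applies Theorem~\ref{thm:gorodnik_nevo} with the normalisation \eqref{eq:siegelnormalisation}.

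The one imprecision is that Theorem~\ref{thm:gorodnik_nevo} also has the \emph{stable} mean ergodic theorem as a hypothesis, not just well-roundedness. Your Howe--Moore citation does supply it, since decay of matrix coefficients gives the mean ergodic theorem for any family whose volume tends to infinity, and your sandwich shows $\vol((E_{X,Y}(T))^{\pm}(\varepsilon))\to\infty$. You should state this step explicitly. The paper instead quotes the quantitative $L^{p+}$ bound of \cite[Theorem~4.5]{gorodnik_nevo}.
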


We now introduce all the definitions necessary to state the main theorem due to Gorodnik and Nevo, on which the proof of Theorem~\ref{lem:maincountinglemma}
 is founded.

\begin{definition}[{Well-roundedness \cite[Definition~1.1]{gorodnik_nevo}}] \label{def:wellrounded}
    For
 any $\varepsilon > 0$, we set
\begin{align*}
    B^{+}_T (\varepsilon)= \mathcal{O}_\varepsilon B_T \mathcal{O}_\varepsilon \quad \text{ and } 
    \quad 
    B^{-}_T (\varepsilon) =\bigcap_{h_1, h_2 \in \mathcal{O}_\varepsilon} h_1 B_T h_2.
\end{align*}
We say that $(B_T)_{T>0}$ is {\it well-rounded} with respect to $(\mathcal{O}_\varepsilon)_{\varepsilon > 0}$ if for every $\delta >0$, there exist $\varepsilon_0 >0$, $ T_0>0$ such that for all $T \ge T_0$ and $ \varepsilon \in (0, \varepsilon_0)$, we have 
\begin{align*}
    m_G(B^{+}_T (\varepsilon)) \le (1 + \delta) m_G(B^{-}_T (\varepsilon)).
\end{align*}

\end{definition}

\begin{definition}[{Mean ergodic theorem \cite[Definition~1.4]{gorodnik_nevo}}] \label{def:meanergodictheorem}
       On the quotient $X = G/\Gamma$, let $\mu$ denote the  probability measure  obtained by normalising the measure on $X$ induced by $m_G$. For $f \in L^2(X)$, we define the averaging operator $\pi_X(\beta_T)$ by 
       \[ \pi_X(\beta_T)(f): x \longmapsto 
       \frac{1}{m_G(B_T)}
       \int_{B_T}
       f(g^{-1}x)\mathrm{d}m_G(g).
       \]
        The operators $(\pi_X(\beta_T))_{T>0}$
       satisfy the {\it mean ergodic theorem in $L^2(X)$} 
       if
       \begin{align*}
           \left\|
           \pi_X(\beta_T)(f) -  
           \int_X
           f \mathrm{d} \mu
           \right\|_{L^2(X)} \longrightarrow 0 \quad \text{ as } \quad T \to \infty
       \end{align*}
       for all $f \in L^2(X)$. The ergodic theorem is called \textit{stable} if there is some $\varepsilon_0>0$ such that it holds for all families $B_T^+(\varepsilon)$ and $B_T^-(\varepsilon)$, $\varepsilon \in (0,\varepsilon_0)$, simultaneously. 
\end{definition}

With the terminology in place, we now state the quantitative lattice point counting theorem of Gorodnik and Nevo \cite[Theorem~1.7]{gorodnik_nevo}.

\begin{thm}[Gorodnik--Nevo] \label{thm:gorodnik_nevo}
  Assume that the family $(B_T)_{T>0}$ is well-rounded with respect to  $(\mathcal{O}_\varepsilon)_{\varepsilon>0}$ and that the averages along $(B_T)_{T>0}$  satisfy the stable mean ergodic theorem in $L^2(G/\Gamma)$. Then, as $T \to \infty$, we have   
    \[
    \# (\Gamma \cap B_T) \sim \frac{m_G(B_T)}{m_G( G/\Gamma )}.
    \] 
\end{thm}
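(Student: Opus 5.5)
The plan is the standard ``smoothing plus ergodic averaging'' argument: compare $N_T:=\#(\Gamma\cap B_T)$ with averaging operators applied to the normalised indicator function of a small neighbourhood of the identity coset in $X=G/\Gamma$. Fix $\varepsilon$ small enough that property (ii) holds, i.e.\ $\mathcal{O}_\varepsilon\mathcal{O}_\varepsilon$ injects into $G/\Gamma$. Let $\Omega_\varepsilon=[\mathcal{O}_\varepsilon]_{G/\Gamma}\subset X$ and $f_\varepsilon=\mathbf 1_{\Omega_\varepsilon}$. Injectivity gives $\mu(\Omega_\varepsilon)=m_G(\mathcal{O}_\varepsilon)/m_G(G/\Gamma)>0$.

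\textbf{Step 1: pointwise sandwich.} Fix $h\in\mathcal{O}_\varepsilon$ and put $x=[h]$. We have $g^{-1}h\in \mathcal{O}_\varepsilon\Gamma$ if and only if $g\in\bigcup_{\gamma\in\Gamma}h\gamma\mathcal{O}_\varepsilon$, using symmetry of $\mathcal{O}_\varepsilon$. The translates $h\gamma\mathcal{O}_\varepsilon$ are pairwise disjoint: a common point would give $\gamma^{-1}\gamma'\in\mathcal{O}_\varepsilon\mathcal{O}_\varepsilon$, which contradicts injectivity unless $\gamma=\gamma'$.
\begin{itemize}
\item If $\gamma\in B_T$, then $h\gamma\mathcal{O}_\varepsilon\subset B_T^+(\varepsilon)$. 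Hence
\[
\int_{B_T^+(\varepsilon)}f_\varepsilon(g^{-1}x)\,\mathrm{d}m_G(g)\ \ge\ N_T\, m_G(\mathcal{O}_\varepsilon).
\]
\item If $h\gamma\mathcal{O}_\varepsilon$ meets $B_T^-(\varepsilon)$, then $\gamma\in h^{-1}B_T^-(\varepsilon)\mathcal{O}_\varepsilon\subset B_T$, by the definition of $B_T^-$ as an intersection. Hence
\[
\int_{B_T^-(\varepsilon)}f_\varepsilon(g^{-1}x)\,\mathrm{d}m_G(g)\ \le\ N_T\, m_G(\mathcal{O}_\varepsilon).
\]
\end{itemize}
Both inequalities hold for every $x\in\Omega_\varepsilon$.

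\textbf{Step 2: choosing a good point.} The mean ergodic theorem converges only in $L^2$, not pointwise, and this is the main obstacle; I handle it with Chebyshev's inequality. By stability, both families $B_T^\pm(\varepsilon)$ satisfy the mean ergodic theorem. So for any $\eta>0$, the set of $x$ where
\[
\bigl|\pi_X(\beta^\pm_T)f_\varepsilon(x)-\mu(\Omega_\varepsilon)\bigr|>\eta
\]
has measure tending to $0$ as $T\to\infty$. Since $\mu(\Omega_\varepsilon)>0$ is fixed, for large $T$ there is an $x\in\Omega_\varepsilon$ at which both averages lie within $\eta$ of $\mu(\Omega_\varepsilon)$. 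Dividing the Step 1 inequalities by $m_G(\mathcal{O}_\varepsilon)$ at this point gives
\[
m_G(B_T^-)\Bigl(\tfrac1{m_G(G/\Gamma)}-\tfrac{\eta}{m_G(\mathcal{O}_\varepsilon)}\Bigr)\ \le\ N_T\ \le\ m_G(B_T^+)\Bigl(\tfrac1{m_G(G/\Gamma)}+\tfrac{\eta}{m_G(\mathcal{O}_\varepsilon)}\Bigr).
\]

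\textbf{Step 3: order of quantifiers.} Given $\delta>0$, take $\varepsilon$ from well-roundedness, so that $m_G(B_T^+)\le(1+\delta)m_G(B_T^-)$ for $T\ge T_0$; shrink $\varepsilon$ further if needed so that (ii) and stability apply. Only after fixing $\varepsilon$, choose $\eta=\delta\, m_G(\mathcal{O}_\varepsilon)/m_G(G/\Gamma)$. Since $B_T^-\subset B_T\subset B_T^+$, the two bounds of Step 2 pin $N_T$ within a factor $(1+O(\delta))$ of $m_G(B_T)/m_G(G/\Gamma)$ for all large $T$. Letting $\delta\to0$ gives the asymptotic.
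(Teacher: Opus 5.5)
Your argument is correct. The paper does not prove this statement itself; it imports it from \cite[Theorem~1.7]{gorodnik_nevo}, and what you have written is essentially Gorodnik and Nevo's own proof there. Its steps are: sandwich $N_T\, m_G(\mathcal{O}_\varepsilon)$ between the integrals of the indicator of $\mathcal{O}_\varepsilon\Gamma/\Gamma$ over $B_T^-(\varepsilon)$ and $B_T^+(\varepsilon)$, at every point of $\mathcal{O}_\varepsilon\Gamma/\Gamma$; use $L^2$ convergence and Chebyshev's inequality to find such a point where both averages are close to their mean; then use well-roundedness, fixing $\varepsilon$ before $\eta$ and $T$, to close the gap. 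The finiteness, positivity and measurability of $B_T^{\pm}(\varepsilon)$ that you use tacitly are implicit in the stable mean ergodic hypothesis.
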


To prove Theorem~\ref{lem:maincountinglemma}, we verify the hypotheses of Theorem~\ref{thm:gorodnik_nevo} for the family $(E_{X,Y}(T))_{T>0}$ and establish an asymptotic formula for $\vol(E_{X,Y}(T))$.

\subsection{The volume of $E_{X,Y}(T)$} \label{section:volume}
   In this section, we compute an asymptotic formula for the volumes of the sets $E_{X,Y}(T)$ as $T \to \infty$. 

\begin{lem}\label{lem:volumecomputation}
    For fixed invertible matrices $X,Y \in \mathcal{M}_{n}(\Z)$, the limit $\kappa (X,Y)$ in
    \eqref{def:kappa} exists and satisfies $\kappa (X,Y)>0$.
\end{lem}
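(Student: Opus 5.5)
Our plan is to compute $\vol(E_{X,Y}(T))$ in the Cartan coordinates \eqref{eq:cartan_decomp}--\eqref{eq:cartanvol}.

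\emph{Reduction to diagonal matrices.} Since $X$ and $Y$ are fixed and invertible, \eqref{eq:basic_infty_norm_inequ} gives $|Xg|\asymp|g|$ and $|g^{-1}Y|\asymp|g^{-1}|$. Hence $E_{X,Y}(T)$ is sandwiched between $E_{I,I}(cT)$ and $E_{I,I}(CT)$. For $g=k_1a_{\mathbf t}k_2$ we also have $|g|\asymp e^{t_1}$ and $|g^{-1}|\asymp e^{-t_n}$. This gives
\[
\vol(E_{X,Y}(T))\asymp \int_{\{\mathbf t\in H^+:\ t_1\le \log T,\ -t_n\le \log T\}}\prod_{i<j}\sinh(t_i-t_j)\,\mathrm d\mathbf t,
\]
up to shifting $\log T$ by constants. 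Since $\sinh(t_i-t_j)\asymp e^{t_i-t_j}$ away from the walls, the integral is of size $T^{a}$ times a power of $\log T$. Here
\[
a=\max\Bigl\{\textstyle\sum_{i<j}(t_i-t_j):\ \mathbf t\in H^+,\ t_1\le1,\ -t_n\le1\Bigr\}.
\]
The linear form $\sum_{i<j}(t_i-t_j)=\sum_i (n+1-2i)t_i$ is maximised at $t_i=1$ for $i\le n/2$ and $t_i=-1$ for $i>n/2$, with a middle coordinate $0$ when $n$ is odd. Its value there is $\lfloor n^2/2\rfloor$. We must check that the maximiser is unique: the form has strictly positive coefficients on the first half and strictly negative on the second, so it is a vertex. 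Uniqueness means there is no logarithmic factor, and we obtain $T^{\lfloor n^2/2\rfloor}\ll\vol\ll T^{\lfloor n^2/2\rfloor}$. This already gives positivity of any limit.

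\emph{Existence of the limit.} Write $\mathbf t=\log T\cdot \mathbf v+\mathbf s$, where $\mathbf v$ is the maximising vertex. The bulk of the mass comes from $\mathbf s$ in a cone at the vertex, with $\mathbf s$ bounded in the directions that change the linear form. Then
\[
\prod_{i<j}\sinh(t_i-t_j)=T^{\lfloor n^2/2\rfloor}2^{-\binom n2}e^{\ell(\mathbf s)}\bigl(1+o(1)\bigr),
\]
where $\ell(\mathbf s)=\sum_i(n+1-2i)s_i$. The $o(1)$ holds for pairs $i<j$ with $v_i>v_j$. For pairs inside the same block, namely $v_i=v_j$, the factor $\sinh(s_i-s_j)$ stays exact.

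\emph{The renormalised integral.} The normalised volume becomes
\[
T^{-\lfloor n^2/2\rfloor}\vol(E_{X,Y}(T))=\int_{K\times K}\int_{\mathbf s}\mathbf 1\bigl[\,|Xk_1a_{\mathbf v\log T+\mathbf s}k_2|\le T,\ |k_2^{-1}a_{-\mathbf v\log T-\mathbf s}k_1^{-1}Y|\le T\,\bigr]\,w_T(\mathbf s)\,\mathrm d\mathbf s\,\mathrm dk_1\,\mathrm dk_2 .
\]
As $T\to\infty$, the condition $|Xk_1a_{\mathbf t}k_2|\le T$ converges pointwise to $|Xk_1Pa_{\mathbf s}Pk_2|\le 1$. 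Here $P$ is the projection onto the first block of coordinates, with the middle coordinate $e^{s}$ contributing $0$ after scaling. The analogous statement holds for the inverse, with the complementary projection $Q$ and $a_{-\mathbf s}$. The weights $w_T$ converge to
\[
w(\mathbf s)=c_n2^{-\binom n2}e^{\ell(\mathbf s)}\prod_{\substack{i<j\\ v_i=v_j}}2\sinh(s_i-s_j)\,e^{-(s_i-s_j)}.
\]
We then apply dominated convergence. The limit is positive, because the limiting region contains an open set of $(k_1,\mathbf s,k_2)$.

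\emph{Main obstacle.} The hardest step is the dominating function. We will bound the integrand by $C e^{\ell(\mathbf s)}$ on the region $\{s_i\le C \text{ for } i \text{ in the first block},\ s_j\ge -C \text{ for } j \text{ in the second}\}$, obtained from the crude bounds $|g|\asymp e^{t_1}$ and $|g^{-1}|\asymp e^{-t_n}$. The exponential $e^{\ell(\mathbf s)}$ is integrable there because $\ell$ has strictly positive coefficients on the first block and strictly negative coefficients on the second. The only subtlety is the constraint $\sum_i s_i=0$ together with the ordering inside $H^+$, which we expect to handle by a direct change of variables.
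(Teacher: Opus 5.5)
Your proposal is correct and follows essentially the paper's route. Both use Cartan coordinates, the shift $\mathbf{t}=\log T\cdot\mathbf{v}+\mathbf{s}$, pointwise limits through $D_\pm$ and the block-wise $\sinh$ asymptotics, domination by $e^{\ell(\mathbf{s})}$ on $\{s_i\le C \text{ (first block)},\ s_j\ge -C \text{ (second block)}\}$, and positivity from an open set where the limiting integrand is positive. Your sign-pattern argument for integrability replaces the paper's unique-maximiser argument on $\mathcal{P}$, and your a priori sandwich $E_{I_n,I_n}(cT)\subseteq E_{X,Y}(T)\subseteq E_{I_n,I_n}(CT)$ is an optional extra.

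Two points are stated too loosely. The pointwise convergence of the indicators fails on $\{|Xk_1D_+(\mathbf{s})k_2|=1\}\cup\{|k_2^{-1}D_-(\mathbf{s})k_1^{-1}Y|=1\}$, so you must show this set is null before applying dominated convergence; the paper does this by real-analyticity. You should also note that the domain $H^+-\log T\cdot\mathbf{v}$ increases to the cone $\mathcal{C}$ cut out by the within-block orderings.
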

\begin{proof}
We use the 
Cartan decomposition     $g = k_1 a_{\mathbf{t}} k_2$ of any 
 $g \in \SL_n(\R)$, in the notation of \eqref{eq:cartan_decomp}.
   Set $T = e^L$ and write
  \begin{align*} 
  f_L(k_1,\mathbf{t},k_2) &= \mathbf{1}_{[| X k_1 a_{\mathbf{t}} k_2 | \le T]} 
      \mathbf{1}_{[| (k_1 a_{\mathbf{t}} k_2)^{-1} Y | \le T]}  \\
      &=
      \mathbf{1}_{[| e^{-L} X k_1 a_{\mathbf{t}} k_2 | \le 1]}  
      \mathbf{1}_{[|e^{-L} (k_1 a_{\mathbf{t}} k_2)^{-1} Y | \le 1]} .
  \end{align*}
  By \eqref{eq:cartanvol}, we have  
  \begin{align*}
      \vol(E_{X,Y}(T)) = c_n \int_K \int_{H^{+}}  \int_K f_L(k_1,\mathbf{t},k_2) \prod_{i<j} \sinh(t_i - t_j) \;
      \mathrm{d}k_1
      \mathrm{d} \mathbf{t}
      \mathrm{d}k_2.
  \end{align*}
  To this integral we apply the change of variables $\mathbf{x} = \mathbf{t} - L\mathbf{v}$, where we set 
  \begin{align}\label{eq:definitionof v}
  \mathbf{v} =
\begin{cases}
(\underbrace{1,\dots,1}_{n/2},
 \underbrace{-1,\dots,-1}_{n/2}) &\text{if $n$ is even},\\
(\underbrace{1,\dots,1}_{(n-1)/2},
 0,
 \underbrace{-1,\dots,-1}_{(n-1)/2}) & \text{if $n$ is odd.}
\end{cases}
  \end{align}
  We further set  $\mathcal{D}_L = H^+ -L \mathbf{v}$. It follows that the volume of $E_{X,Y}(T)$ may be computed as
  \begin{align}\label{eq:volEafterchangeofvariables}
      c_n \int_K \int_{H}  \int_K 
     \mathbf{1}_{\mathcal{D}_L}(\mathbf{x})
     f_L(k_1,\mathbf{x}+L\mathbf{v},k_2) \prod_{i<j} \sinh(x_i - x_j + L(v_i-v_j)) \;
      \mathrm{d}k_1
      \mathrm{d} \x
      \mathrm{d}k_2,
  \end{align}
  where $H$ is given in \eqref{eq:defH+}. 
  We now study the pointwise behaviour of the integrand as $L \to \infty$. Ultimately, via an application of dominated convergence, this will allow us to describe the asymptotic behaviour of the integral.

\subsubsection*{Asymptotics of $\mathbf{1}_{\mathcal{D}_L}$}
    Let us fix any $\mathbf{x} \in H$. We have $\mathbf{x} \in \mathcal{D}_L = H^{+}-L\mathbf{v}$ if and only if 
    \[
        x_i + L v_i
        \ge 
        x_j + L v_j
    \]
    for all $i < j$; therefore, we have
    \begin{align}\label{eq:domoconDL}
        \mathcal{D}_L  \subset \mathcal{C}  :=
    \{
    \mathbf{y} \in H: y_i \ge y_j \text{ if $i < j \le \tfrac{n}{2}$ or $\tfrac{n+1}{2} < i <j$ } \}.
    \end{align}
     Conversely, for $\y \in \mathcal{C}$, we have $y_i + Lv_i \ge y_j + L v_j$ for all $i<j$;
      i.e.\ $\y \in \mathcal{D}_{L}$, 
     for all sufficiently large $L$. Since we study the pointwise behaviour of $\mathbf{1}_{\mathcal{D}_{L}}$, the lower bound on $L$ is allowed to depend on $\y$. 
     Indeed, for $i<j$ with $i < j \le \tfrac{n}{2}$ or $\tfrac{n+1}{2} < i <j$, we have $v_i = v_j$ and $y_i \ge y_j$, and so
      $y_i + L v_i \ge y_j + Lv_j$ holds for all $L$. For the remaining combinations of indices $i<j$, we have  $v_i > v_j$ and so we get $y_i + L v_i \ge y_j + Lv_j$  for all sufficiently large $L$.
    For all $\mathbf{x} \in H$, 
     the pointwise convergence of the indicator function
    \[
    \mathbf{1}_{\mathcal{D}_L}(\x) \to \mathbf{1}_{\mathcal{C}}(\x), 
    \]
    follows as $L \to \infty$.

\subsubsection*{Asymptotics of $f_L$} Recall the notation~\eqref{eq:cartan_decomp}. 
   We observe, as $L \to \infty$, that 
      \begin{equation}\label{eq:defofD+}
          e^{-L} a_{\mathbf{x}+L\mathbf{v}} = \mathrm{diag}(
      e^{x_i + (v_i - 1)L }
      )_{i=1,\ldots,n} \to \mathrm{diag}
      (
      \mathbf{1}_{[v_i=1]} e^{x_i  }
      )_{i=1,\ldots,n} =: D_+(\mathbf{x}) 
      \end{equation}
      and
        \begin{equation}\label{eq:defofD-}
      e^{-L} a_{\mathbf{x}+L\mathbf{v}}^{-1} = \mathrm{diag}(
      e^{-x_i - (v_i + 1)L }
      )_{i=1,\ldots,n} \to \mathrm{diag}
      (
      \mathbf{1}_{[v_i=-1]} e^{-x_i  }
      )_{i=1,\ldots,n} =: D_{-}(\mathbf{x}).
        \end{equation}
      Hence, for all $(k_1,\mathbf{x},k_2)$ not lying on $| X k_1 D_+(\mathbf{x}) k_2|=1$ or $|  k_2^{-1} D_-(\mathbf{x}) k_1^{-1} Y |=1$, we have  
     \begin{equation}
     \label{eq:f(k_1,x,k_2)}
       f_L(k_1,\mathbf{x}+L\mathbf{v},k_2) 
       \to \mathbf{1}_{[| X k_1 D_+(\mathbf{x}) k_2|\le 1]} 
        \mathbf{1}_{[|  k_2^{-1} D_-(\mathbf{x}) k_1^{-1} Y |\le 1]} =: f(k_1,\mathbf{x},k_2) ,
    \end{equation}
      as $L \to \infty$.
      
    For $(k_1,\mathbf{x},k_2)$ lying on $| X k_1 D_+(\mathbf{x}) k_2|=1$ or $|  k_2^{-1} D_-(\mathbf{x}) k_1^{-1} Y |=1$, pointwise convergence 
    of $f_L$ to $f$ 
    may fail: for instance, we may have
    that $\lim_{L \to \infty} | e^{-L} X k_1 a_{\mathbf{x}+L\mathbf{v}} k_2 | =1$ but $| e^{-L} X k_1 a_{\mathbf{x}+L\mathbf{v}} k_2 | > 1 $ for all $L$.
However, the set         \[
       \{(k_1,\x,k_2) :
        | X k_1 D_+(\mathbf{x}) k_2| = 1
        \text{ or }
        |  k_2^{-1} D_-(\mathbf{x}) k_1^{-1} Y | =1
        \}
        \]
        has measure zero.
       To formally justify this, one may cover $K \times K$ by finitely many real 
       analytic coordinate charts. On
each such chart, each entry of the matrices
$Xk_1D_+(\mathbf{x})k_2$ and $k_2^{-1}D_-(\mathbf{x})k_1^{-1}Y$ is real
analytic in $(k_1,\mathbf x,k_2)$. None of these entries is identically equal
to $\pm 1$, as the entries of $D_+(\mathbf x)$ and
$D_-(\mathbf x)$, and thus the entries of $Xk_1D_+(\mathbf{x})k_2$ and $k_2^{-1}D_-(\mathbf{x})k_1^{-1}Y$, can be made arbitrarily small. The vanishing loci of real-analytic functions that are not identically zero, such as $(X k_1 D_+(\mathbf{x}) k_2)_{i,j}\pm1$ and $(k_2^{-1} D_-(\mathbf{x}) k_1^{-1} Y)_{i,j}\pm1$, always constitute a set of measure zero, and our exceptional set is contained in the finite union over $1 \le i,j \le n$ of these sets.
Hence we have pointwise almost everywhere convergence 
$f_L \to f$.

      Moreover, by \eqref{eq:basic_infty_norm_inequ} and the fact that  $| k | \le 1$ for all $k \in K$,  for all $\x \in \mathcal{D}_{L}=H^{+}-L\mathbf{v}$ with $| e^{-L} X k_1 a_{\mathbf{t}} k_2 | \le 1$  and $|e^{-L} (k_1 a_{\mathbf{t}} k_2)^{-1} Y | \le 1$, writing $\mathbf{t}=\mathbf{x}+L\mathbf{v}$, we have 
      \begin{align*}
           e^{x_1} = 
      e^{-L}
      | a_{\mathbf{x} + L\mathbf{v}} | 
      &=
      e^{-L}| (X k_1)^{-1}X k_1 a_{\mathbf{x} + L\mathbf{v}} k_2 k_2^{-1} | \\
      &\le
      n^3 |  X^{-1} |  | e^{-L} X k_1 a_{\mathbf{x} + L\mathbf{v}} k_2  | \le n^3 |  X^{-1} |
      \end{align*}
      and 
      \[
      e^{-x_n} = 
      e^{-L} |a_{\mathbf{-x} - L\mathbf{v}} | 
      \le n^3
      | Y^{-1} |
      | e^{-L}(k_1 a_{\mathbf{t}} k_2)^{-1} Y  | \le  n^3
      | Y^{-1} |.
      \]
       By continuity, analogous bounds hold for $|D_+(\x)|$ and 
      $|D_-(\x)|$. Namely, if we have  $\limsup_{L \to \infty}| e^{-L} X k_1 a_{\mathbf{t}} k_2 | \le 1$  and $\limsup_{L \to \infty}|e^{-L} (k_1 a_{\mathbf{t}} k_2)^{-1} Y | \le 1$, we get 
      \[
      e^{x_1}=
      |D_+(\x)| = \lim_{L\to \infty} ( e^{-L} |a_{\mathbf{x}+  L\mathbf{v}}|) \le n^3|X^{-1}|
      \]
      and 
      \[
      e^{-x_n}=
       |D_-(\x)| = \lim_{L\to \infty} ( e^{-L} |a^{-1}_{\mathbf{x}+  L\mathbf{v}}|) \le n^3|Y^{-1}|.
      \]
    Taking logarithms, it follows that there exists a constant $c_0 = c_0(X,Y) > 0$, independent of $L$, such that 
    \begin{align} \label{eq:suppboundforf}
        x_1 \le c_0, \ x_n \ge - c_0 
    \end{align}
    whenever $\mathbf{1}_{\mathcal{D}_{L}}(\x)f_L(k_1,\mathbf x+L\mathbf{v},k_2)\neq0$ or
$\mathbf{1}_{\mathcal{C}}(\x)f(k_1,\mathbf x,k_2)\neq0$, where $f(k_1,\mathbf x,k_2)$ is defined by~\eqref{eq:f(k_1,x,k_2)}.

\subsubsection*{Asymptotics of $\prod_{i<j} \sinh(x_i-x_j + L(v_i-v_j))$} 
For $v_i > v_j $, since $\sinh(x) = \frac{1}{2}(e^x - e^{-x})$, we see that 
      \begin{align}\label{eq:sinhasymp}
          \sinh(x_i - x_j + L(v_i-v_j)) 
       \sim \frac{e^{x_i - x_j + L(v_i-v_j)}}{2}, \quad \text{as $L \to \infty$,}
      \end{align}
            while for $v_i = v_j$, we have
      \[
      \sinh(x_i - x_j + L(v_i-v_j)) = \sinh(x_i-x_j).
      \]
      Since $\sum_{i<j}(v_i-v_j) = \lfloor n^2/2 \rfloor$, this yields, as $L \to \infty$,   
      \[
      \prod_{i<j} \sinh(x_i - x_j + L(v_i-v_j)) \sim 
      e^{\lfloor n^2/2 \rfloor L} \Psi(\mathbf{x})
      ,
      \]
      where 
      \[
      \Psi(\mathbf{x}) = 2^{-\binom{n}{2}+2\binom{\lfloor n/2 \rfloor}{2}} \prod_{\substack{i < j \\ v_i = v_j}}
      \sinh(x_i-x_j)
      \prod_{\substack{i < j \\ v_i > v_j}} e^{x_i - x_j}.
      \]
    Here we used the fact that there are $\binom{n}{2}-2\binom{\lfloor n/2 \rfloor}{2}$
    factors of the form \eqref{eq:sinhasymp}. Indeed, there
    are $\binom{n}{2}$ indices $i<j$ overall, and of these, there are $2\binom{\lfloor n/2 \rfloor}{2}$ indices  with $v_i = v_j$. As $v_i \ge v_j$ for all $i<j$, it follows that there are $\binom{n}{2}-2\binom{\lfloor n/2 \rfloor}{2}$ indices $i<j$ with $v_i>v_j$.

      We observe that for all $\x \in \mathcal{D}_L$, we have
      \begin{align}\label{eq:domconvpsi}
      0 \le
          e^{-\lfloor n^2/2 \rfloor L}\prod_{i<j} \sinh(x_i - x_j + L(v_i-v_j)) \le \prod_{i<j} e^{x_i-x_j} =\exp(F(\x)),
      \end{align}
      where we set $F(\x)=\sum_{i<j}(x_i-x_j)$.

\subsubsection*{Dominated convergence} 
      Let 
      \[
      \mathcal{C}_0 = 
      \{ 
      \y \in \mathcal{C}:  y_1 \le c_0 , y_n  \ge -c_0
      \},
      \]
      with $\mathcal{C}$ defined as in \eqref{eq:domoconDL} and $c_0$ given as in \eqref{eq:suppboundforf}. By \eqref{eq:domoconDL},
      \eqref{eq:suppboundforf} and 
      \eqref{eq:domconvpsi},  the function 
      $\mathbf{1}_{\mathcal{C}_0} (\mathbf{x}) \exp(F(\x))$ gives an upper bound for the modulus of the renormalised integrand from  \eqref{eq:volEafterchangeofvariables}; i.e. for
      \[
       e^{-\lfloor n^2/2\rfloor L}
       \mathbf{1}_{\mathcal{D}_L}(\mathbf{x})
     f_L(k_1,\mathbf{x}+L\mathbf{v},k_2) 
      \prod_{i<j} \sinh(x_i - x_j + L(v_i-v_j)),
      \]
      as well as for the modulus of its almost everywhere pointwise limit 
      $
      \mathbf{1}_{\mathcal{C}}(\x)
     f(k_1,\mathbf{x},k_2)
      \Psi(\x)$.
      
    We now show that there exists a positive constant $\eta > 0$ with
    \begin{align}\label{eq:fboundonR}
         F(\mathbf{r}) \le - \eta | \mathbf{r} |
    \end{align}
    for all $\mathbf{r} \in \mathcal{R}$, where we let 
    \[ \mathcal{R}:= \{\y \in \mathcal{C}: y_1 \le 0, \; y_n \ge 0 \} .
    \]
     From the definition of $\mathcal{C}$ in \eqref{eq:domoconDL}, for $\y \in \mathcal{R}$ we have that
     $0 \ge y_1 \ge y_i$ for all $i$ with $v_i =1$, and  $0 \le y_n \le y_i$ for all $i$ with $v_i =-1$.
    For $\y \in \mathcal{C}_0$, we have $y_1 \le c_0$ and $y_n \ge -c_0$; by the definition of $\mathcal{C}$ in \eqref{eq:domoconDL}, it follows that 
    $c_0 \ge y_1 \ge y_i$ for all $i$ with $v_i =1$, and  $-c_0 \le y_n \le y_i$ for all $i$ with $v_i =-1$. Therefore every $\x \in \mathcal{C}_0$ may be written as 
    \[
    \x = \x_0 + \mathbf{r}, 
    \]
where $|\x_0| \le c_0$ and $\mathbf{r} \in \mathcal{R}$.
    It therefore suffices
    to establish \eqref{eq:fboundonR}  in order to prove that the function $\mathbf{1}_{\mathcal{C}_0} \exp(F(\x))$ is integrable.
    
    Observe that on the polytope 
    \[
    \mathcal{P} = \{ 
    \mathbf{x} \in H^+: x_1 \le 1, x_n \ge -1 
    \},
    \]
    where $H^{+}$ is defined in \eqref{eq:defH+},
    the vector $\mathbf{v}$  in \eqref{eq:definitionof v} is the unique maximiser of the function $F(\x)$. 
    Indeed, writing $d_i = x_{i} - x_{i+1} \ge 0$, we see that 
    \[
    F(\x) = \sum_{i=1}^{n-1} i(n-i)d_i, \qquad \sum_{i=1}^{n-1} d_i = x_1 - x_n \le 2.
    \]
    Thus, considering the coefficients $i(n-i)$, we see that $F(\x)$ is maximised by  choosing $\x$ in such a way that $d_{n/2} = 2$ for $n$ even and $d_{(n-1)/2}+d_{(n+1)/2}=2$ for $n$ odd. Because of the constraint $ x_1 + \cdots + x_n = 0$, this is achieved on $\mathcal{P}$ exactly by $\x=\mathbf{v}$. 

    For every $\mathbf{r} \in \mathcal{R} \setminus \{\mathbf{0}\}$, there is some small positive number $s_\mathbf{r}$ with $\mathbf{v} + s_\mathbf{r}\mathbf{r} \in \mathcal{P}$. By linearity of $F$ and the fact that $\mathbf{v}$ is the unique maximiser, we have that 
    \[
    F(\mathbf{v} + s_\mathbf{r}  \mathbf{r}) = F(\mathbf{v}) + s_\mathbf{r} F(\mathbf{r}) < F(\mathbf{v}), 
    \]
    and so $F(\mathbf{r}) < 0$. As the set $\{ \mathbf{r} \in \mathcal{R}: | \mathbf{r}| =1 \}$ is compact and as $F$ is linear, the desired inequality  \eqref{eq:fboundonR} follows.

      Now it follows by dominated convergence  that $\lim_{T \to \infty}  (T^{-\lfloor n^2 / 2 \rfloor} \vol(E_{X,Y}(T)))$ exists, is finite and may be expressed as
      \begin{align*}
          \lim_{T \to \infty}  (T^{-\lfloor n^2 / 2 \rfloor} \vol(E_{X,Y}(T)))  
          =
          c_n
          \int_{K} \int_{\mathcal{C}} \int_{K}
          G(k_1,\x,k_2)
          \mathrm{d} k_1
        \mathrm{d} \x
        \mathrm{d} k_2 
        = \kappa(X,Y),
      \end{align*}
      where 
      \[
      G(k_1,\x,k_2) = f(k_1,\mathbf{x},k_2)\Psi(\x).
      \]

\subsubsection*{Positivity of $\kappa(X,Y)$} 
On $\mathcal{C}$, we have $\Psi(\x) \ge 0$ and thus $G(k_1,\x,k_2) \ge 0$, so it suffices to prove that 
      \[
       \int_{K} \int_{W} \int_{K}
          G(k_1,\x,k_2)
          \mathrm{d} k_1
        \mathrm{d} \x
        \mathrm{d} k_2 > 0
      \]
      for some $W \subset \mathcal{C}$. We show that we may take 
      \[
      W = \{
      \y \in H: | \mathbf{z} - \y | \le \mu
      \}, 
      \]
      where $\mu>0$ is some small positive parameter and
$       z_i = - v_i R + \delta_i.
 $
       The $\delta_i$ are chosen to be small perturbations such that $\sum_{i=1}^n \delta_i =0$ and 
      $z_i > z_{i+1}$ if $v_i = v_{i+1}$, and $R$ is a sufficiently large parameter. 
      We recall the definitions of $D_+(\x)$ and $D_-(\x)$ in \eqref{eq:defofD+} and \eqref{eq:defofD-}. 
      By \eqref{eq:basic_infty_norm_inequ}, for $\x \in W$ we have 
      \[
      | X k_1 D_+(\mathbf{x}) k_2| \le
      n^3 | X|  | D_+(\mathbf{x})| \leq
      n^3 | X| e^{-R + \delta_1 + \mu }
      \]
      and
      \[
      |  k_2^{-1} D_-(\mathbf{x}) k_1^{-1} Y|\le n^3 | Y| |   D_-(\mathbf{x})| \leq  n^3 | Y| e^{-R -\delta_n + \mu}.
      \]
      Thus we may choose $R$ sufficiently large and $\mu$ sufficiently small in such a way that $W$ lies inside the interior of $\mathcal{C}$ and $K \times W \times K$ is contained in the support of 
      \[
      \mathbf{1}_{[| X k_1 D_+(\mathbf{x}) k_2|\le 1]}
      \mathbf{1}_{[|  k_2^{-1} D_-(\mathbf{x}) k_1^{-1} Y|\le 1]}
      .
      \]
      Now let $ \nu >0$ be the minimum of the function $\Psi$ on the compact set $W$. Then clearly 
      \[
      c_n 
       \int_{K} \int_{W} \int_{K}
          G(k_1,\x,k_2)
          \mathrm{d} k_1
        \mathrm{d} \x
        \mathrm{d} k_2 \ge c_n  \, \nu \, \mu_H(  W )>0. \qedhere
      \]
\end{proof}

\subsection{Neighbourhoods of the identity}

 For $\varepsilon > 0$, we define 
\begin{align}\label{eq:nbh_id}
    \mathcal{O}_\varepsilon = 
    \{ 
    h \in \SL_n(\R):
    | 
    I_n - h
    | < \varepsilon, \ 
    | 
    I_n - h^{-1}
    | < \varepsilon
    \}.
\end{align}
Recall the definition \eqref{eq:OO} of 
$\mathcal{O}_{\varepsilon}\mathcal{O}_{\varepsilon}$.
The family $(\mathcal{O}_\varepsilon)_{\varepsilon>0}$ consists of symmetric neighbourhoods of $I_n$, with the properties that $\mathcal{O}_{\varepsilon_1} \subset \mathcal{O}_{\varepsilon_2}$ if $\varepsilon_1 < \varepsilon_2$ and that the projection $\mathcal{O}_{\varepsilon}\mathcal{O}_{\varepsilon} \to \SL_n(\R) / \SL_n(\Z)$ is injective for all sufficiently small $\varepsilon$. 
Indeed, the symmetry of the sets as well as the monotonicity in  $\varepsilon$ follow directly from the definition of $\mathcal{O}_\varepsilon$, while we give a short proof of the injectivity in the next result.

\begin{lem}
     The projection 
     $$
     \mathcal{O}_{\varepsilon}\mathcal{O}_{\varepsilon} \to \SL_n(\R) / \SL_n(\Z)
     $$ 
     is injective for all $\varepsilon \in \big(0,\frac{1}{(1+2n)^2}\big)$. 
\end{lem}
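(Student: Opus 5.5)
Injectivity means the following: if $g,g'\in\mathcal{O}_\varepsilon\mathcal{O}_\varepsilon$ satisfy $g'=g\gamma$ for some $\gamma\in\SL_n(\Z)$, then $\gamma=I_n$. So I will show that $\gamma=g^{-1}g'$ is forced to be an integer matrix very close to the identity, and hence equal to it. This needs three ingredients.

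\emph{Products of two elements of $\mathcal{O}_\varepsilon$.} Take $g=h_1h_2$ with $h_1,h_2\in\mathcal{O}_\varepsilon$. Write $h_i=I_n+E_i$ with $|E_i|<\varepsilon$. Then
\[
g-I_n=E_1+E_2+E_1E_2 .
\]
By \eqref{eq:basic_infty_norm_inequ} this gives $|g-I_n|<2\varepsilon+n\varepsilon^2$. The same bound holds for $g^{-1}=h_2^{-1}h_1^{-1}$, because the definition \eqref{eq:nbh_id} also controls $|I_n-h_i^{-1}|$. The same holds for $g'$ and $g'^{-1}$.

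\emph{The matrix $\gamma$.} Write $g^{-1}=I_n+F$ and $g'=I_n+F'$, where $|F|,|F'|<\varepsilon_1:=2\varepsilon+n\varepsilon^2$. Then
\[
\gamma=g^{-1}g'=I_n+F+F'+FF',
\]
and again by \eqref{eq:basic_infty_norm_inequ},
\[
|\gamma-I_n|<2\varepsilon_1+n\varepsilon_1^2 .
\]

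\emph{Integrality.} The entries of $\gamma-I_n$ are integers, so it is enough to check that $2\varepsilon_1+n\varepsilon_1^2<1$ whenever $\varepsilon<(1+2n)^{-2}$. I expect this to be the only delicate step, and only as arithmetic: the stated threshold is clearly generous, and the work is to verify it crudely. Since $\varepsilon<1/(1+2n)^2\le 1/9$ and $n\varepsilon\le 1/(4n)$, we get $\varepsilon_1\le 3\varepsilon<1/3$ roughly. Then
\[
2\varepsilon_1+n\varepsilon_1^2\le 6\varepsilon+9n\varepsilon^2<1 .
\]
This yields $\gamma=I_n$, hence $g=g'$, which is the claimed injectivity.
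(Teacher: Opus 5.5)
Your proof is correct and takes essentially the same route as the paper. You bound $|I_n-g|$ and $|I_n-g^{-1}|$ for $g\in\mathcal{O}_\varepsilon\mathcal{O}_\varepsilon$ using \eqref{eq:basic_infty_norm_inequ}, then show the integer matrix $g^{-1}g'-I_n$ has height $<1$; the only difference is bookkeeping, since your expansion $h_i=I_n+E_i$ gives the sharper intermediate bound $2\varepsilon+n\varepsilon^2$ in place of the paper's $(1+2n)\varepsilon$, which is why the threshold $(1+2n)^{-2}$ looks generous from your side.
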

\begin{proof}
    Assume  there are $g,h \in \mathcal{O}_\varepsilon \mathcal{O}_{\varepsilon}$ such that $[h]_{\SL_n(\R) / \SL_n(\Z)} = [g]_{\SL_n(\R) / \SL_n(\Z)}$; i.e.\ such that $ h^{-1}g \in \SL_n(\Z)$. 
    We need to prove that $h =g$.
    
    Write $g = g_1 g_2$, $h = h_1 h_2$ for $g_1,g_2,h_1,h_2 \in \mathcal{O}_\varepsilon$. 
    Using \eqref{eq:basic_infty_norm_inequ}, we have  
\[
    | I_n - g_1 g_2 | \le | I_n - g_1| + | g_1 - g_1 g_2| \le \varepsilon + n | g_1 | | I_n - g_2| \le (1 + 2n) \varepsilon,
\]
and similarly $|
I_n - (h_1 h_2)^{-1}| \le (1+2n)\varepsilon$. But now, another application of the inequalities from \eqref{eq:basic_infty_norm_inequ} yields that 
\[
    | I_n - h^{-1}g | \le
    | I_n - h^{-1} | + n|h^{-1}| | I_n - g | \le 
    (1+2n)^2\varepsilon
    < 1;
\]
as  $h^{-1}g \in \SL_n(\Z)$, the inequality $ | I_n - h^{-1}g | < 1 $ implies that $| I_n - h^{-1}g | =0$, whence $g =h$.
\end{proof}

\subsection{Well-roundedness of $E_{X,Y}(T)$} 
In this section, we verify that the family of sets $(E_{X,Y}(T))_{T>0}$ satisfies the required well-roundedness property
from Definition  \ref{def:wellrounded}.

\begin{lem}\label{lem:wellroundedness}
    The sets $(E_{X,Y}(T))_{T>0}$ are well-rounded with respect to the family of neighbourhoods $(\mathcal{O}_\varepsilon)_{\varepsilon>0}$ defined in \eqref{eq:nbh_id}.
\end{lem}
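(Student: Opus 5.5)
The plan is to sandwich the thickened and shrunken sets between rescaled copies of $E_{X,Y}(T)$ itself, and then use the asymptotic $\vol(E_{X,Y}(T))\sim \kappa(X,Y)T^{\lfloor n^2/2\rfloor}$ from Lemma~\ref{lem:volumecomputation}. Concretely, we will show that for every $\varepsilon\in(0,1)$ there is a constant $c(\varepsilon)\ge 1$ with $c(\varepsilon)\to 1$ as $\varepsilon\to 0$, such that
\begin{equation*}
E_{X,Y}(T/c(\varepsilon))\subseteq B_T^-(\varepsilon)\subseteq B_T^+(\varepsilon)\subseteq E_{X,Y}(c(\varepsilon)T)
\end{equation*}
for all $T>0$. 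Here $B_T=E_{X,Y}(T)$, and the middle inclusion is trivial since $I_n\in\mathcal{O}_\varepsilon$.

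For the outer inclusion, take $g\in E_{X,Y}(T)$ and $h_1,h_2\in\mathcal{O}_\varepsilon$. We need bounds for $|Xh_1gh_2|$ and $|(h_1gh_2)^{-1}Y|=|h_2^{-1}g^{-1}h_1^{-1}Y|$. The left factor $h_1$ sits between $X$ and $g$, so we cannot simply pull it out. Instead we write
\begin{equation*}
Xh_1 g h_2 = X g h_2 + X(h_1-I_n)g h_2 .
\end{equation*}
Since $g$ itself may be huge, we rewrite $X(h_1-I_n)g=X(h_1-I_n)X^{-1}\cdot Xg$. Using \eqref{eq:basic_infty_norm_inequ}, this gives
\begin{equation*}
|Xh_1gh_2|\le n|Xg|\,|h_2| + n^3|X|\,|X^{-1}|\,\varepsilon\,|Xg|\,|h_2|\le c_1(\varepsilon)T,
\end{equation*}
where $c_1(\varepsilon)= n(1+\varepsilon)\bigl(1+n^2|X||X^{-1}|\varepsilon\bigr)$.

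This is the main obstacle: the factor $n$ from $|Mh_2|\le n|M||h_2|$ does not tend to $1$. So we should instead treat $h_2$ by the same trick, writing $Xgh_2=Xg+Xg(h_2-I_n)$. This gives $|Xgh_2|\le |Xg|(1+n\varepsilon)$, and hence
\begin{equation*}
|Xh_1gh_2|\le T(1+n\varepsilon)\bigl(1+n^2|X||X^{-1}|\varepsilon\bigr).
\end{equation*}
Symmetrically, using $|h_i^{-1}-I_n|<\varepsilon$ and writing $h_1^{-1}Y=Y\cdot Y^{-1}h_1^{-1}Y$, we get
\begin{equation*}
|h_2^{-1}g^{-1}h_1^{-1}Y|\le T(1+n\varepsilon)\bigl(1+n^2|Y||Y^{-1}|\varepsilon\bigr).
\end{equation*}
So $c(\varepsilon)=(1+n\varepsilon)\bigl(1+n^2\max(|X||X^{-1}|,|Y||Y^{-1}|)\varepsilon\bigr)$ works, and it tends to $1$.

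For the inner inclusion, suppose $g\in E_{X,Y}(T/c(\varepsilon))$ and $h_1,h_2\in\mathcal{O}_\varepsilon$. Then $g'=h_1^{-1}gh_2^{-1}$ lies in $E_{X,Y}(T)$ by the outer estimate, since $\mathcal{O}_\varepsilon$ is symmetric. Hence $g=h_1g'h_2\in h_1E_{X,Y}(T)h_2$. As $h_1,h_2$ were arbitrary, this gives $E_{X,Y}(T/c(\varepsilon))\subseteq B_T^-(\varepsilon)$.

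Finally, by Lemma~\ref{lem:volumecomputation},
\begin{equation*}
\frac{\vol(B_T^+(\varepsilon))}{\vol(B_T^-(\varepsilon))}\le \frac{\vol(E_{X,Y}(cT))}{\vol(E_{X,Y}(T/c))}\to c(\varepsilon)^{2\lfloor n^2/2\rfloor}
\end{equation*}
as $T\to\infty$, using $\kappa(X,Y)>0$ so that the denominator is nonzero. Given $\delta>0$, we choose $\varepsilon_0$ with $c(\varepsilon_0)^{2\lfloor n^2/2\rfloor}<1+\delta/2$. Since $c$ is monotone in $\varepsilon$, it is enough to take $T_0$ large for this single $\varepsilon_0$: for $\varepsilon<\varepsilon_0$ we bound the ratio by the one with $c(\varepsilon_0)$, namely $\vol(E_{X,Y}(c(\varepsilon_0)T))/\vol(E_{X,Y}(T/c(\varepsilon_0)))$, which is uniform in $\varepsilon$. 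This yields well-roundedness in the sense of Definition~\ref{def:wellrounded}.
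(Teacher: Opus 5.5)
Your proposal is correct and follows essentially the same route as the paper: you sandwich $B_T^{-}(\varepsilon)$ and $B_T^{+}(\varepsilon)$ between copies of $E_{X,Y}$ at rescaled heights using \eqref{eq:basic_infty_norm_inequ}, and then invoke the volume asymptotic of Lemma~\ref{lem:volumecomputation}; the conjugation $X(h_1-I_n)X^{-1}$, the multiplicative factor $c(\varepsilon)$ in place of $(1\pm\varepsilon q)$, and your explicit monotonicity argument for uniformity in $\varepsilon$ are only cosmetic refinements. The one slip is a constant: $X(h_1-I_n)X^{-1}\cdot Xgh_2$ has four factors, so the bound is $n^3|X||X^{-1}|\varepsilon|Xgh_2|$, and the $n^2$ in $c(\varepsilon)$ should be $n^3$ (likewise for $Y$), which changes nothing in the argument.
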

 \begin{proof}
     We may assume without loss of generality that $\varepsilon < 1$. 
Assume that $h_1, h_2 \in \mathcal{O}_\varepsilon$, and let $g \in E_{X,Y}(T)$. By \eqref{eq:basic_infty_norm_inequ}, we have
\begin{align*}
    | X h_1 g h_2|
    &\le 
    | X (I_n-h_1) g h_2| + 
     | X g (I_n-h_2)|
     + | Xg | \\
     &\le 
     2 n^4 |X^{-1}|  |X| \varepsilon T 
     + 
     n \varepsilon T
     + T  \\
     & = (1+\varepsilon q)T,
\end{align*}
for some positive constant $q = q(n,X,Y)>0$.
Here we used the fact that we have  $|h|, \; |h^{-1}| \le 1 + \varepsilon < 2$ for all $h \in \mathcal{O}_\varepsilon$, and that $| g | \le n |X^{-1}| |Xg| \le n |X^{-1}|  T$.
Analogously, after possibly increasing $q$, one may show that 
\[
    |  (h_1 g h_2)^{-1} Y| \le (1+\varepsilon q)T.
\]
Recalling the notation in Definition~\ref{def:wellrounded}, we have that 
\begin{align}\label{eq:B+subset}
\mathcal{O}_\varepsilon E_{X,Y}(T) \mathcal{O}_\varepsilon  =  (E_{X,Y}(T))^{+}(\varepsilon) \subset E_{X,Y}((1+\varepsilon q)T).
\end{align}
Shrinking $\varepsilon$ further if necessary, we may assume $\varepsilon < 1/q$.

Conversely, let $g' \in E_{X,Y}((1-\varepsilon q)T)$. Then the same argument  gives that 
\[
    | X h_1^{-1} g' h_2^{-1}|, \;
     |  (h_1^{-1} g' h_2^{-1})^{-1} Y| \le (1+\varepsilon q)(1-\varepsilon q)T < T;
\]
this means that $h_1^{-1} g' h_2^{-1} \in E_{X,Y}(T)$, i.e. that $ g' \in h_1 E_{X,Y}(T) h_2$ for all $h_1, h_2 \in \mathcal{O}_\varepsilon$ and thus
\begin{align}\label{eq:B-inclusion}
E_{X,Y}((1-\varepsilon q)T)
\subset
    \bigcap_{h_1, h_2 \in \mathcal{O}_\varepsilon} (h_1 E_{X,Y}(T) h_2) = (E_{X,Y}(T))^-(\varepsilon)   .
\end{align}

Applying \eqref{eq:B+subset}
and \eqref{eq:B-inclusion} in combination with
Lemma~\ref{lem:volumecomputation}, we get that 
\begin{align*}
    \frac{\vol((E_{X,Y}(T))^{-}(\varepsilon))}{\vol((E_{X,Y}(T))^{+}(\varepsilon))} \ge \frac{\vol(E_{X,Y}((1-\varepsilon q)T))}{\vol(E_{X,Y}((1+\varepsilon q)T))} \sim \left(
    \frac{1-\varepsilon q}{1+\varepsilon q}
    \right)^{\lfloor n^2/2 \rfloor}
\end{align*}
as $T \to \infty$. As $\varepsilon \to 0$, the right-hand side approaches 1. This proves that $E_{X,Y}(T)$ is well-rounded with respect to $(\mathcal{O}_\varepsilon)_{\varepsilon>0}$.
 \end{proof}
\subsection{Ergodic theorem for $E_{X,Y}(T)$}\label{section:ergodic}
By \cite[Theorem~4.5 and Remark~4.6(ii)]{gorodnik_nevo}, with the notation of Definition~\ref{def:meanergodictheorem}, for almost simple non-compact connected Lie groups $G$ and any lattice $\Gamma \subset G$, we have for any $\delta>0$ that  
  \begin{align*}
           \left\|
           \pi_X(\beta_T)(f) -  
           \int_X
           f \mathrm{d} \mu
     \right\|_{L^2(X)} \le C_\delta m_G(B_T)^{-\frac{1}{2n_e(p)}+\delta} \|f\|_{L^2},
       \end{align*}
       with \[
       n_e(p)=
\begin{cases}
\text{the least even integer greater than or equal to }p/2 &\text{ if } p>2,\\
1 & \text{ if } p=2,
\end{cases}\]        
       provided that the representation on $L^2_0(G/\Gamma)$ is an $L^{p+}$-representation. By \cite[Example~4.3]{gorodnik_nevo}, this holds in our case with $p = 2(n-1)$. As $m_G(E_{X,Y}(T))\to \infty$ as $T \to \infty$, by Lemma~\ref{lem:volumecomputation}, we see that the mean ergodic theorem holds for $(E_{X,Y}(T))_{T>0}$. 
       
       Recalling from \eqref{eq:B+subset} and \eqref{eq:B-inclusion} the notation for 
       $(E_{X,Y}(T))^{+}(\varepsilon)$ and $(E_{X,Y}(T))^{-}(\varepsilon)$, it remains to argue that the ergodic theorem is stable. 
       Since, for any $\varepsilon>0$, we have the inclusion $E_{X,Y}(T) \subseteq (E_{X,Y}(T))^{+}(\varepsilon)$, it follows that \[
       m_G((E_{X,Y}(T))^{+}(\varepsilon)) \to \infty
       \] as $T \to \infty$. By Lemma~\ref{lem:wellroundedness}, this further implies that $m_G((E_{X,Y}(T))^{-}(\varepsilon)) \to \infty$ as $T \to \infty$. Therefore the ergodic theorem is stable.

\subsection{Applying Theorem~\ref{thm:gorodnik_nevo}}
In this section, we use  Theorem~\ref{thm:gorodnik_nevo} to prove Theorem~\ref{lem:maincountinglemma}.
\begin{proof}[Proof of Theorem~\ref{lem:maincountinglemma}]
    By Lemma~\ref{lem:wellroundedness}, the sets $(E_{X,Y}(T))_{T>0}$ are well-rounded with respect to the family of neighbourhoods $(\mathcal{O}_\varepsilon)_{\varepsilon>0}$ defined in \eqref{eq:nbh_id}. By our discussion in Section~\ref{section:ergodic}, the stable mean ergodic theorem holds for $(E_{X,Y}(T))_{T>0}$. Thus we may apply Theorem~\ref{thm:gorodnik_nevo} to conclude that 
    \[
     \# (E_{X,Y}(T) \cap \SL_n(\Z)) \sim \frac{\vol(E_{X, Y}(T))}{\vol (\SL_n(\R) / \SL_n(\Z))},
    \]
    as $T \to \infty$.
    By Lemma~\ref{lem:volumecomputation} and \eqref{eq:siegelnormalisation}, we finally get 
    \begin{equation*}
        \# (E_{X,Y}(T) \cap \SL_n(\Z)) \sim  \kappa (X,Y) T^{\lfloor n^2/2 \rfloor}.
     \qedhere
    \end{equation*}    
\end{proof}

\section{Asymptotics } \label{sec:asymp}

\subsection{Deduction of the main asymptotic formula}

In this section we use the results from  Section~\ref{section:latticepts} in order to prove Theorem~\ref{thm:asymptotic_formula}.

\begin{proof}[Proof of Theorem~\ref{thm:asymptotic_formula}]
Let $M\in \mathcal{M}_n(\ZZ)$ be non-singular and suppose that  $M = AB$ for some matrices $A,B \in \mathcal{M}_{n}(\Z)$. 
Define the lattice $\mathsf{\Lambda} =A\Z^n$. Then clearly  
    \[
    M \Z^n = AB\Z^n \subseteq \mathsf{\Lambda}  \subseteq \Z^n. 
    \]
    There are only finitely many possibilities for the lattice $\mathsf{\Lambda}$. (See 
    Proposition \ref{prop:latticeinclusion}.)
    
    For each such lattice $\mathsf{\Lambda}$
    with $M\Z^n \subseteq \mathsf{\Lambda} \subseteq \Z^n$, let us fix a basis matrix $B_\mathsf{\Lambda}$. 
       Note that the matrix $B_{\mathsf\Lambda}^{-1}M$ has integer entries since 
$M \Z^n \subseteq \mathsf{\Lambda}$.
 For $A$ with $M= AB$, there is a lattice $\mathsf{\Lambda}$
    and a change of basis matrix $U \in \GL_n(\Z)$
    such that 
   $
    A =  B_\mathsf{\Lambda} U
    $ and hence $B = (B_\mathsf{\Lambda} U)^{-1}M$. Therefore we get that 
    \[
    \tau_n(T,M) 
    =
    \sum_{M\Z^n \subseteq \mathsf{\Lambda} \subseteq \Z^n} \# \{ 
    U \in \GL_n(\Z): | B_\mathsf{\Lambda} U | \le T, \: | U^{-1}B_\mathsf{\Lambda}^{-1}M|\le T
    \}.
    \]
    Let $J = \mathrm{diag}(-1,1,\ldots,1)$. Then we have $\det(UJ) = - \det(U)$ but
    \[
    | B_\mathsf{\Lambda} U | = |  B_\mathsf{\Lambda} U J | \quad \text{ and } 
    \quad 
    | (B_\mathsf{\Lambda} U)^{-1}M| =
    | (B_\mathsf{\Lambda} U J)^{-1}M|.
    \]
    Hence we may restrict our count to $\SL_n(\Z)$ and conclude that 
    \begin{equation}\label{eq:chicken}
        \tau_n(T,M) 
    =
    2
    \sum_{M\Z^n \subseteq \mathsf{\Lambda} \subseteq \Z^n} \# \{ 
    U \in \SL_n(\Z):  | B_\mathsf{\Lambda} U | \le T, \: | U^{-1}B_\mathsf{\Lambda}^{-1}M|\le T
    \}. 
    \end{equation}
But then it follows that 
\begin{align*}    
     \tau_n(T,M) 
    &=
    2
    \sum_{M\Z^n \subseteq \mathsf{\Lambda} \subseteq \Z^n} \# (E_{B_\mathsf{\Lambda}, B_\mathsf{\Lambda}^{-1}M}(T)\cap \SL_n(\Z)) 
    \\
    & \sim 2 T^{\lfloor n^2/2 \rfloor}
    \sum_{
    \substack{
    M \Z^n \subseteq \mathsf{\Lambda} \subseteq \Z^n, \\
    \mathsf{\Lambda} \text{ lattice }
    }
    }
    \kappa (B_\mathsf{\Lambda}, B_\mathsf{\Lambda}^{-1}M),
    \end{align*}
by  Theorem~\ref{lem:maincountinglemma}. 
\end{proof}

\subsection{The special case of $2\times 2$ matrices}\label{2x2}

    Let us consider the special case $n =2$ and $M = I_2$. In this section we verify that our formula in 
    Corollary~\ref{cor1.3} gives the same asymptotics  as \eqref{eq:krieg}.
Recalling the notation 
\eqref{eq:divisor-id},
it follows from Corollary~\ref{cor1.3}  that 
    \[
    \tau_2(T) 
 \sim 2 T^{2} \kappa(I_2, I_2),
    \]
    as $T \to \infty$. By the proof of Lemma~\ref{lem:volumecomputation}, we have  
    \[
    \kappa(I_2, I_2) = c_2 \int_{K} \int_{\mathcal{C}} \int_{K}  
      \mathbf{1}_{[| k_1 D_+(\mathbf{x}) k_2|\le 1]}
        \mathbf{1}_{[|  k_2^{-1} D_-(\mathbf{x}) k_1^{-1} |\le 1]}
      \Psi(\x) \mathrm{d} k_1
      \mathrm{d} \mathbf{x}
      \mathrm{d} k_2,
    \]
    where 
    \[
    D_+(\x) = \begin{pmatrix}
        e^{x_1} & 0 \\
        0 & 0
    \end{pmatrix}, \quad  
    D_-(\x) = \begin{pmatrix}
        0 & 0 \\
        0 & e^{-x_2}
    \end{pmatrix}, \quad \Psi(\x) = \frac{e^{x_1 - x_2}}{2},
    \]
with   $K = \SO_2(\R)$  and 
    $
    \mathcal{C} = \{ 
    \x \in \R^2:  x_1 + x_2 = 0
    \}.
    $

    Writing 
    \[
    k_1 = \begin{pmatrix}
        \cos (\alpha_1) & -\sin (\alpha_1) \\
        \sin(\alpha_1) & \cos(\alpha_1)
    \end{pmatrix}, \quad 
    k_2 = \begin{pmatrix}
        \cos (\alpha_2) & -\sin (\alpha_2) \\
        \sin(\alpha_2) & \cos(\alpha_2)
    \end{pmatrix},
    \]
    one may compute for all $\x \in \mathcal{C}$ that  
    \begin{align*}
          |k_1 D_+(\x) k_2| 
          = |  k_2^{-1} D_-(\mathbf{x}) k_1^{-1} | 
          = e^{x_1} m(\alpha_1)m(\alpha_2), 
    \end{align*}
    where 
    $m(\alpha_i) = \max 
          \{
          |\cos(\alpha_i)|, |\sin(\alpha_i)|
          \}$  is a $\tfrac{\pi}{2}$-periodic even function. 
    Indeed, we have 
    \begin{align*}
         k_1 D_+(\x) k_2
    &= 
    \begin{pmatrix}
        \cos (\alpha_1) & -\sin (\alpha_1) \\
        \sin(\alpha_1) & \cos(\alpha_1)
    \end{pmatrix}
     \begin{pmatrix}
        e^{x_1} & 0 \\
        0 & 0
    \end{pmatrix}
    \begin{pmatrix}
        \cos (\alpha_2) & -\sin (\alpha_2) \\
        \sin(\alpha_2) & \cos(\alpha_2)
    \end{pmatrix} \\
    &=
    e^{x_1}
    \begin{pmatrix}
        \cos (\alpha_1)
        \cos (\alpha_2)& -\cos (\alpha_1)\sin (\alpha_2) \\
        \sin (\alpha_1)\cos (\alpha_2) & -\sin (\alpha_1)\sin (\alpha_2)
    \end{pmatrix}
    \end{align*}
   and 
          \begin{align*}
         k_2^{-1} D_{-}(\x) k_1^{-1}
    &= 
    \begin{pmatrix}
        \cos (\alpha_2) & \sin (\alpha_2) \\
        -\sin(\alpha_2) & \cos(\alpha_2)
    \end{pmatrix}
     \begin{pmatrix}
        0 & 0 \\
        0 & e^{-x_2}
    \end{pmatrix}
     \begin{pmatrix}
        \cos (\alpha_1) & \sin (\alpha_1) \\
        -\sin(\alpha_1) & \cos(\alpha_1)
    \end{pmatrix}
    \\
    &=
    e^{-x_2}
    \begin{pmatrix}
        -\sin (\alpha_2)
        \sin (\alpha_1)& \sin (\alpha_2)\cos (\alpha_1) \\
        -\cos (\alpha_2)\sin (\alpha_1) & \cos (\alpha_2)\cos (\alpha_1)
    \end{pmatrix}.
    \end{align*}    
    By \eqref{eq:cartanvol} and the fact that $m$ is $\tfrac{\pi}{2}$-periodic and even, we get 
    \begin{align*}
        \kappa (I_2,I_2) &=  \frac{c_2}{2 (2 \pi)^2} \int_{0}^{2 \pi}  \int_0^{2 \pi} 
        \int_{-\infty}^{-\log(m(\alpha_1)m(\alpha_2))}
       e^{2x_1}
          \mathrm{d} x_1
        \mathrm{d} \alpha_1
      \mathrm{d} \alpha_2 \\
      &= 
      \frac{c_2}{4 (2 \pi)^2} \int_{0}^{2 \pi}  \int_0^{2 \pi} 
        (m(\alpha_1)
        m(\alpha_2))^{-2} 
        \mathrm{d} \alpha_1
      \mathrm{d} \alpha_2 \\
      & =
      \frac{4 c_2}{ \pi^2} \int_{0}^{ \pi/4}  \int_0^{\pi/4} 
        (\cos(\alpha_1)
        \cos(\alpha_2))^{-2}
        \mathrm{d} \alpha_1
      \mathrm{d} \alpha_2 
      \\
      & = 
      \frac{4 c_2}{ \pi^2},
    \end{align*}
since $\int_0^{\pi/4} \frac{\mathrm{d}x}{\cos(x)^2} =1$. 
It follows from \eqref{eq:cartanvol} that 
    \[
     c_2 =  
      \frac{
      2^{\binom{2}{2}+2-1} \pi^{\frac{2(2+1)}{2}-1}
      }{ 
       \Gamma(\tfrac{2}{2})^{2} \zeta(2)} = \frac{4 \pi^{2}}{{\pi^2}/{6}} = 24.
    \]
    This shows 
    $\kappa (I_2, I_2) = {96}/{\pi^2}$, which thereby shows that 
    Corollary~\ref{cor1.3} agrees with  \eqref{eq:krieg}
when  $n =2$ and $M = I_2$.

\section{Uniform upper bound}

In this section we prove Theorem \ref{thm:uniform}. 
We may assume that $n \ge 2$ throughout, since the case $n=1$ follows from the usual divisor bound.  We recall our convention that any implied constant is allowed to depend on $n$. 
 
\subsection{Notation}
For symmetric matrices
$A,B \in \mathcal{M}_n(\R)$, we write 
\begin{equation}\label{def:matrixorder}
     A \preceq B
\end{equation}
whenever the difference $B-A$ is positive semidefinite. Equivalently, this means that 
\begin{equation}\label{eq:matrixordereuclideannorm}
        \x^t A \x \le \x^t B \x,\quad  \text{ for all }  \x \in \R^n.
\end{equation}
For {\it positive  definite} symmetric matrices $A,B \in \mathcal{M}_n(\R)$, we claim that 
\begin{equation}\label{eq:matrixorderimpliesdet}
    A \preceq B \quad \Rightarrow \quad \det(A) \le \det(B).
\end{equation}
It is clear that the relation is preserved under conjugation, since this corresponds to a change of basis of the underlying quadratic form in \eqref{eq:matrixordereuclideannorm}. In particular, this means that $A \preceq B$ is equivalent to $C\preceq I_n$, where
\[
   C:= B^{-1/2} A B^{-1/2}.
\]
Note that $B^{1/2}$ is well-defined, since $B$ is a symmetric positive definite matrix. 
The matrix $C$ is a positive definite real symmetric matrix, so all of its eigenvalues are positive real numbers. Let $\lambda$ be any eigenvalue of $C$ with eigenvector $\mathbf{v}$. Then, since $C \preceq I_n$, we get 
$ \lambda \| \mathbf{v} \|_2^2 \le  \| \mathbf{v} \|_2^2$, whence $\lambda \in (0,1]$. It follows that 
\[
    \det(A) \det(B)^{-1} =  \det(C) \le 1,
\]
as required to complete the proof of \eqref{eq:matrixorderimpliesdet}.

\subsection{Intermediate lattices}
\label{section:intermediate_lattices}

We now give an upper bound for the number of lattices $\mathsf{\Lambda}\in \mathcal{L}_n$ satisfying the inclusions $M\Z^n\subseteq \mathsf{\Lambda} \subseteq \Z^n$, where $M$ is an integer matrix with $\det(M) \neq 0$ and  $\mathcal{L}_n$ denotes the set of all integer lattices of full rank.

\begin{prop}\label{prop:latticeinclusion}
    Let $M \in \mathcal{M}_n(\Z)$ with $\det(M) \neq 0$. Then 
    \[
    \# \{ \mathsf{\Lambda} \in \mathcal{L}_n: M \Z^n \subseteq \mathsf{\Lambda} \subseteq \Z^n \} \ll_{\varepsilon} |\det(M)|^{ \lfloor n^2 /4 \rfloor/n + \varepsilon } ,
    \]
    for any $\eps>0$.
\end{prop}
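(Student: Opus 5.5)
We reduce to counting subgroups of a finite abelian group and then bound those counts one prime at a time. Write $N=|\det M|$. By the Smith normal form there are $U,V\in\GL_n(\Z)$ with $UMV=D=\mathrm{diag}(d_1,\dots,d_n)$. The map $\mathsf\Lambda\mapsto U\mathsf\Lambda$ is a bijection from lattices with $M\Z^n\subseteq\mathsf\Lambda\subseteq\Z^n$ onto lattices with $D\Z^n\subseteq\mathsf\Lambda'\subseteq\Z^n$, because $U M\Z^n=UMV\Z^n=D\Z^n$. These in turn correspond bijectively to subgroups of $G=\Z^n/D\Z^n\cong\bigoplus_i \Z/d_i\Z$, a group of order $N$ and rank at most $n$. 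Since $G=\bigoplus_{p\mid N}G_p$ and every subgroup splits along this decomposition, the count is $\prod_{p\mid N}s(G_p)$, where $s(\cdot)$ denotes the number of subgroups.

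\textbf{Local count.} Let $G_p$ have order $p^k$ and type $\lambda$, a partition of $k$ with at most $n$ parts, so every part of the conjugate partition satisfies $\lambda'_i\le n$. A subgroup has some type $\mu\subseteq\lambda$. By the classical formula of Birkhoff (or Delsarte), the number of subgroups of type $\mu$ equals
\[
\prod_{i\ge1} p^{\mu'_{i+1}(\lambda'_i-\mu'_i)}\begin{bmatrix}\lambda'_i-\mu'_{i+1}\\ \mu'_i-\mu'_{i+1}\end{bmatrix}_p .
\]
We use the bound $\left[\begin{smallmatrix}a\\ b\end{smallmatrix}\right]_p\le p^{b(a-b)}\prod_{j\ge1}(1-2^{-j})^{-1}\le C\,p^{b(a-b)}$. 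Taking $a=\lambda'_i-\mu'_{i+1}$ and $b=\mu'_i-\mu'_{i+1}$ gives $a-b=\lambda'_i-\mu'_i$, so the exponents telescope:
\[
\mu'_{i+1}(\lambda'_i-\mu'_i)+(\mu'_i-\mu'_{i+1})(\lambda'_i-\mu'_i)=\mu'_i(\lambda'_i-\mu'_i)\le\lfloor(\lambda'_i)^2/4\rfloor .
\]
Only columns $i\le\lambda_1\le k$ contribute, so the number of such factors is at most $k$.

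\textbf{The elementary inequality.} We next check that $\lfloor m^2/4\rfloor/m\le\lfloor n^2/4\rfloor/n$ for $1\le m\le n$.
\begin{itemize}
\item For even $m$ the left side is $m/4$; for odd $m$ it is $(m-1/m)/4$. Both expressions are increasing in $m$.
\item If $n$ is odd and $m\le n-1$ is even, then $m/4\le (n-1)/4\le (n-1/n)/4$.
\end{itemize}
Hence
\[
\sum_i\mu'_i(\lambda'_i-\mu'_i)\le\frac{\lfloor n^2/4\rfloor}{n}\sum_i\lambda'_i=\frac{\lfloor n^2/4\rfloor}{n}\,k .
\]
The number of types $\mu\subseteq\lambda$ is at most $(k+1)^n$, since $\mu$ has at most $n$ parts, each between $0$ and $k$. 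Combining, $s(G_p)\le C^{k}(k+1)^n\,p^{k\lfloor n^2/4\rfloor/n}$.

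\textbf{Globalising.} Taking the product over $p\mid N$, with $k=v_p(N)$, gives
\[
\#\{\mathsf\Lambda\}\le N^{\lfloor n^2/4\rfloor/n}\prod_{p^k\| N}C^{k}(k+1)^n .
\]
This constant-per-prime form is not yet usable, because $C^{k}$ summed over primes gives $C^{\Omega(N)}$, which is not $\ll N^\varepsilon$. So we must refine the Gaussian binomial bound before this step closes. First, replace $C$ by $\prod_{j\ge1}(1-p^{-j})^{-1}\le 1+O(1/p)$. Second, note that only columns with $\mu'_i\ne\mu'_{i+1}$ give a nontrivial binomial, and there are at most $n$ such columns. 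This yields a per-prime factor $C'^{\,n}(k+1)^n$ with $C'$ absolute, so the product is $\le (C'^n)^{\omega(N)}\tau(N)^n\ll_\varepsilon N^\varepsilon$ by the standard divisor-type bounds.

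The main obstacle is exactly this last step: controlling the constants uniformly in $p$ and $k$ so that they contribute only $N^\varepsilon$. The other delicate point is verifying the exponent in Birkhoff's formula; if quoting it proves awkward, I would fall back on counting subgroups of each type by counting generating tuples modulo automorphisms.
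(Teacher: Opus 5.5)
Your proof is correct, and its skeleton matches the paper's. Lattices between $M\Z^n$ and $\Z^n$ correspond to subgroups of $\Z^n/M\Z^n$, subgroups split over the Sylow factors, each $p$-part is bounded by $p^{k\lfloor n^2/4\rfloor/n}$ times a divisor-type factor, and the product over $p$ is handled by $\tau(N)^n\ll_\varepsilon N^\varepsilon$.

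The difference is the local input. Write $G_p=C_{p^{e_1}}\times\cdots\times C_{p^{e_r}}$ with $e_1\le\cdots\le e_r$ and $f_j=e_j-e_{j-1}$. The paper quotes Bhowmik--Wu: the number of subgroups of $G_p$ is a polynomial in $p$ with non-negative integer coefficients. It has degree $D(G_p)=\sum_j\lfloor (r-j+1)^2/4\rfloor f_j$ and coefficient sum $\prod_j(e_j+1)\le(1+k)^n$. So $s(G_p)\le(1+k)^np^{D(G_p)}$ follows at once, with no stray constants.

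You instead derive the exponent from Birkhoff's type-by-type formula. Your column-wise bound $\sum_i\lfloor(\lambda_i')^2/4\rfloor$ is exactly $D(G_p)$ written in conjugate-partition form, since the $f_j$ columns $i$ with $e_{j-1}<i\le e_j$ all have $\lambda_i'=r-j+1$. Both proofs then finish with the same monotonicity of $\lfloor m^2/4\rfloor/m$.

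The cost of your route is the $q$-binomial constants, which you rightly flag. What actually closes the argument is your second observation: a type $\mu$ has at most $\mu_1'\le n$ columns with $\mu_i'\ne\mu_{i+1}'$, which gives an overall factor $C^{n\omega(N)}\ll_\varepsilon N^\varepsilon$. The $1+O(1/p)$ refinement is superfluous, and on its own it would not rescue the $C^k$ version (take $N=2^k$).

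What your route buys is self-containedness, relying on a classical formula rather than a zeta-function theorem, plus a bound type by type; the paper's route is shorter.

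One small omission: your check of $\lfloor m^2/4\rfloor/m\le\lfloor n^2/4\rfloor/n$ skips the case $n$ even, $m$ odd. That case is immediate from $(m-1/m)/4<m/4\le n/4$.
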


 We will use the following group-theoretic result in order to deduce the bound stated in Proposition~\ref{prop:latticeinclusion}.
 
\begin{lem}\label{lem:countingsubgroups}
    Let $G$ be a finite abelian group generated by at most $n$ elements. Then the number of subgroups $H \le G$ is  $O_{\varepsilon,n} ((\# G)^{\lfloor n^2 /4 \rfloor/n + \varepsilon})$.
\end{lem}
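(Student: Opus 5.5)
We reduce to finite abelian $p$-groups and then count subgroups by partitions. Write $G=\bigoplus_p G_p$ as the direct sum of its Sylow subgroups. Every subgroup $H\le G$ decomposes uniquely as $\bigoplus_p (H\cap G_p)$, so the number of subgroups is multiplicative: $s(G)=\prod_p s(G_p)$. Each $G_p$ is generated by at most $n$ elements, so $G_p\cong \bigoplus_{i=1}^n \Z/p^{\lambda_i}\Z$ for a partition $\lambda=(\lambda_1\ge\cdots\ge\lambda_n\ge 0)$ with $|\lambda|=\sum\lambda_i$ and $\#G_p=p^{|\lambda|}$. It therefore suffices to prove a local bound of the form
\[
s(G_p)\le C(n,|\lambda|)\, p^{\lfloor n^2/4\rfloor |\lambda|/n},
\]
where $C(n,k)$ depends only on $n$ and $k=|\lambda|$ and grows at most like $A_n^{k}$, and is independent of $p$. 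Once this holds, the product over $p\mid \#G$ gives $s(G)\le (\#G)^{\lfloor n^2/4\rfloor/n}\prod_{p^k\| \#G}A_n^{k}$. The last factor is $A_n^{\Omega(\#G)}$, which is at most $(\#G)^{\varepsilon}$ for large $\#G$, because $\Omega(N)\le \log_2 N$ only handles this up to a power. To get a genuinely arbitrary small power we will instead use a bound that is polynomial in $k$, namely $C(n,k)\ll_n (k+1)^{B_n}$. Then the product is $\prod_{p^k\|N}(k+1)^{B_n}=\tau(N)^{B_n}\ll_\varepsilon N^{\varepsilon}$ by the divisor bound.

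The local count is the main step. Subgroups $H$ of type $\mu$ in a group of type $\lambda$ (with $\mu\subseteq\lambda$) are counted by the classical Birkhoff–Delsarte formula
\[
\prod_{i\ge1} p^{\mu'_{i+1}(\lambda'_i-\mu'_i)}\binom{\lambda'_i-\mu'_{i+1}}{\mu'_i-\mu'_{i+1}}_p ,
\]
where $\lambda',\mu'$ are the conjugate partitions. Each Gaussian binomial is at most $c\,p^{(\mu'_i-\mu'_{i+1})(\lambda'_i-\mu'_i)}$ with an absolute constant $c$ (for instance $c=\prod_{j\ge1}(1-2^{-j})^{-1}$). Hence the number of subgroups of type $\mu$ is at most $c^{\lambda_1}p^{\sum_i \mu'_i(\lambda'_i-\mu'_i)}$. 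The number of admissible $\mu\subseteq\lambda$ is at most $(k+1)^n$. The constant $c^{\lambda_1}$ is exponential in $k$; to remove it we bound the Gaussian binomial more carefully by $\binom{a}{b}_p\le 2^{a}p^{b(a-b)}$ with $a\le n$, so the constant factor becomes $\prod_i 2^{\lambda'_i}\le 2^{n\lambda_1}$, which is still exponential. Instead we note that only indices $i$ with $\lambda'_i>\lambda'_{i+1}$ contribute nontrivially in blocks. More simply, since $\binom{a}{b}_p\le 4^{n}p^{b(a-b)}$ for $a\le n$, the constant loss is at most $4^{n\cdot(\text{number of } i)}$. The cleaner approach is to keep the exact exponent bound $e=\sum_i \mu'_i(\lambda'_i-\mu'_i)$ and absorb the constants by treating large and small $p$ separately. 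For $p> p_0(n)$ the constant per column is at most $1+O(1/p)$, and $\prod_p(1+O(1/p))^{k_p}$ is again bounded by $\tau$-type factors. For $p\le p_0(n)$ there are only $O_n(1)$ primes, and $c^{k}=p^{k\log c/\log p}$ is handled by the $\varepsilon$ loss only if it is $\ll (\#G_p)^{\varepsilon}$. This fails for fixed $\varepsilon$, so we must check that it is legitimate. It is, because on those primes we can afford the bound $s(G_p)\ll_\varepsilon p^{(\lfloor n^2/4\rfloor/n+\varepsilon)k}$ directly, using $c^k\le p^{\varepsilon k}$ once $k$ is large and the column count is controlled.

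The key inequality, and the expected main obstacle, is
\[
\sum_i \mu'_i(\lambda'_i-\mu'_i)\le \frac{\lfloor n^2/4\rfloor}{n}\sum_i\lambda'_i .
\]
Column by column we have $\mu'_i(\lambda'_i-\mu'_i)\le \lfloor (\lambda'_i)^2/4\rfloor$, and for $0\le a\le n$ the function $\lfloor a^2/4\rfloor/a$ is nondecreasing in $a$ and equals $\lfloor n^2/4\rfloor/n$ at $a=n$. So each column contributes at most $\frac{\lfloor n^2/4\rfloor}{n}\lambda'_i$, and summing gives the claim. Monotonicity follows from checking even and odd $a$ separately: the ratio is $a/4$ for even $a$ and $(a^2-1)/(4a)$ for odd $a$, and both sequences interleave increasingly. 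The delicate point will be making the constants polynomial in $k$, uniformly in $p$, so that the final product is a power of the divisor function. I would first try to bound the full sum over $\mu$ by $(k+1)^{n}\cdot\prod_i C_n$, with the product running over distinct column lengths. There are at most $n$ such lengths, since the Gaussian binomial is trivial ($b=0$ or $b=a$) unless $\mu'_i\ne\mu'_{i+1}$ and $\mu'_i\ne\lambda'_i$. I expect this to yield the bound $(k+1)^{O_n(1)}$.
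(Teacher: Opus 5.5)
Your final argument is correct, but it reaches the local bound by a different route from the paper.

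\textbf{The paper's route.} The paper does not count subgroups type by type. It quotes Bhowmik--Wu \cite[Theorem~3]{bhowmik_wu_zeta2001}: $s(G_p)$ is a polynomial in $p$ with non-negative integer coefficients, of degree $D(G_p)=\sum_j\lfloor (r-j+1)^2/4\rfloor f_j$ and with coefficient sum $\prod_j(e_j+1)\le(1+m_p)^n$. Evaluating at $p$ gives $s(G_p)\le(1+m_p)^n p^{D(G_p)}$, with no Gaussian-binomial constants to control. Monotonicity of $\lfloor k^2/4\rfloor/k$ then gives $D(G_p)\le\lfloor n^2/4\rfloor m_p/n$.

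\textbf{How your route compares.} Since $f_j$ is the number of columns of $\lambda$ of length $r-j+1$, this $D(G_p)$ is exactly your $\sum_i\lfloor(\lambda_i')^2/4\rfloor$. Likewise $\prod_j(e_j+1)$ is your count $\prod_j(\lambda_j+1)$ of admissible $\mu$. So your route rederives the cited degree bound from the classical Birkhoff--Delsarte formula. This buys self-containedness, at the cost of handling the constants yourself.

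Your closing observation handles those constants correctly. The factor $\binom{\lambda_i'-\mu_{i+1}'}{\mu_i'-\mu_{i+1}'}_p$ equals $1$ unless $\mu_i'>\mu_{i+1}'$. A nonincreasing sequence with values in $\{0,\dots,n\}$ has at most $\mu_1'\le n$ strict descents. Hence each type contributes at most $c^n p^{\sum_i\mu_i'(\lambda_i'-\mu_i')}$, so $s(G_p)\le c^n (k+1)^n p^{\lfloor n^2/4\rfloor k/n}$. Then $\prod_{p\mid N}c^n(k_p+1)^n\le\tau(N)^{n(1+\log_2 c)}\ll_\varepsilon N^{\varepsilon}$. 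A simpler constant is also available: $\binom{a}{b}_p$ has non-negative coefficients summing to $\binom{a}{b}\le 2^n$, so $\binom{a}{b}_p\le 2^n p^{b(a-b)}$.

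\textbf{Delete the intermediate attempts.} Two of them assert false things:
\begin{itemize}
\item The claim $A_n^{\Omega(\#G)}\le(\#G)^{\varepsilon}$ fails for $\#G=2^k$.
\item In the small-prime paragraph, $c^k\le p^{\varepsilon k}$ is equivalent to $c\le p^{\varepsilon}$. This does not depend on $k$, and it fails for $p=2$ and small $\varepsilon$ however large $k$ is.
\end{itemize}
Neither is needed once you use that at most $n$ of the Gaussian binomials are nontrivial.
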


\begin{proof}
     Let us first consider a finite abelian $p$-group $G_p$  of order $\# G_p = p^{m_p}$ that is generated by at most $n$ elements. Write 
     $G_p = C_{p^{e_1}} \times \cdots \times C_{p^{e_r}}$ for 
     $1 \le e_1 \le \cdots \le e_r$.
    Since
     we assume that $G_p$ is generated by at most $n$ elements, it follows that $r \le n$. 
     Put 
     \[
     f_1 = e_1, \qquad f_j = e_j - e_{j-1} \quad \text{ for $j \ge 2$},  
     \]
     and  put 
     $
     \alpha_k = \lfloor k^2/4 \rfloor.
     $
     Then, on  taking $a=0$ in \cite[Theorem~3]{bhowmik_wu_zeta2001}, the number of subgroups of $G_p$ is a polynomial in $p$ with non-negative integer coefficients of degree 
     \[
     D(G_p) = \sum_{j=1}^r \alpha_{r-j+1}f_j,
     \]
      and the sum of its coefficients is 
     \[
     \prod_{j=1}^r \bigg( 1+ 
     \sum_{k=1}^j f_k
     \bigg) = \prod_{j=1}^r (e_j +1) \le (1+m_p)^n.
     \]
     We claim that  $k \mapsto \alpha_k/k$ is an increasing function. Indeed, we have 
     $\alpha_{2m}/(2m) =m/2$ and $\alpha_{2m+1}/(2m+1) =m(m+1)/(2m+1)$, and clearly
     $$
     \frac{m}{2}\leq \frac{m(m+1)}{2m+1}\leq\frac{m+1}{2}.
     $$
     Thus it follows that 
     \[
     D(G_p) \le \frac{\alpha_n}{n}
     \sum_{j=1}^r (r-j+1)f_j = \frac{\alpha_n}{n} m_p.
     \]
     It follows that the number of subgroups of $G_p$ is  
     \[
     \# \{ 
     H_p \le G_p
     \} \le \prod_{j=1}^r (1+e_j) p^{D(G_p)} \le (m_p+1)^n (\# G_p)^{\alpha_n / n}. 
     \]

     Now, for any finite abelian group $G$ that is generated by at most $n$ elements, write 
     \[
     G = \prod_{p \mid \#  G} G_p, \qquad \# G_p = p^{m_p}. 
     \]
Since      subgroups decompose uniquely over Sylow factors, we obtain
     \[
     \# \{ H \le G \}
     \le (\# G)^{\alpha_n/n}  \prod_{p | \#  G} (1+m_p)^n \le (\# G)^{\alpha_n/n} \tau (\# G)^n \ll_{\varepsilon,n} (\# G)^{\alpha_n/n + \varepsilon},
     \]   
     on applying the divisor sum estimate.
\end{proof}

\begin{proof}[Proof of Proposition~\ref{prop:latticeinclusion}]
    Since the lattices $M\Z^n\subseteq \mathsf{\Lambda} \subseteq \Z^n$ are in bijection with the subgroups of $\Z^n/M\Z^n$ via the map $\mathsf{\Lambda} \mapsto \mathsf{\Lambda}/M\Z^n$, an application of Lemma~\ref{lem:countingsubgroups} to the group $\Z^n/M\Z^n$, which is generated by at most $n$ elements and has order $\#(\Z^n/M\Z^n) = |\det(M)|$, immediately implies Proposition~\ref{prop:latticeinclusion}.    
\end{proof}

\subsection{Uniform bounds for lattice points on
  \texorpdfstring{$\SL_n(\R)$}{SLₙ(R)}}
\label{section:boundingnumberofbases}

In this section, we prove the following result, which may be of independent interest. 

\begin{thm}\label{thm:uniformlatticecount}
Let $g,h \in \GL_n(\R)$ with $|\det(g)|=|\det(h)|=1$ and let $T_1, T_2 \ge 0$. Then 
    \[
    \# 
    \left\{
    U \in \SL_n(\Z):~|gU| \le T_1,~|(gU)^{-1}h|\le T_2 
    \right\} \ll_{\varepsilon} (T_1 T_2)^{\lfloor n^2/4 \rfloor + \varepsilon} ,
    \]
    for any $\eps>0$.
\end{thm}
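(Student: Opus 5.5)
The plan is to reformulate the count in terms of bases of a unimodular lattice and its dual, and then induct on $n$ by peeling off one column of $A$ and one row of $A^{-1}$ at a time. This should produce the exponents $(n-1)+(n-3)+(n-5)+\cdots=\lfloor n^2/4\rfloor$.

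\emph{Reformulation.} Put $\mathsf\Lambda=g\Z^n$. This is a lattice of covolume $1$, and $A=gU$ runs over bases of $\mathsf\Lambda$ with $\det A=\det g$. Write $a_1,\dots,a_n$ for the columns of $A$ and $b_1^t,\dots,b_n^t$ for the rows of $A^{-1}$. Then the $b_i$ form the dual basis of $\mathsf\Lambda^*=g^{-t}\Z^n$, and $\langle a_i,b_j\rangle=\delta_{ij}$. Set $c_i=h^tb_i$. Then $c_i\in\mathsf\Lambda^\sharp:=h^t\mathsf\Lambda^*$, which is again unimodular, and $\langle h^{-t}c_i,a_j\rangle=\delta_{ij}$. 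The conditions $|gU|\le T_1$ and $|(gU)^{-1}h|\le T_2$ say that $\|a_j\|\ll T_1$ and $\|c_i\|\ll T_2$. Everything is now invariant under the substitutions $g\mapsto gU_0$ and $h\mapsto hU_0'$, so only the pair of unimodular lattices $(\mathsf\Lambda,\mathsf\Lambda^\sharp)$ and their relative position matter. The bound must be uniform in these.

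\emph{Induction step.} Choose the pair $(a_1,c_n)$ first. The constraints on this pair are that $a_1\in\mathsf\Lambda$ is primitive, $c_n\in\mathsf\Lambda^\sharp$ is primitive, and $\langle h^{-t}c_n,a_1\rangle=0$. Once $(a_1,c_n)$ is fixed, the remaining data are a basis $a_2,\dots,a_{n-1}$ together with dual vectors $c_2,\dots,c_{n-1}$ attached to the $(n-2)$-dimensional lattice
$$\mathsf\Lambda'=\bigl(\mathsf\Lambda\cap(h^{-t}c_n)^\perp\bigr)/\Z a_1 .$$
The norm constraints on these vectors descend to projected norms, which are still bounded by $T_1$ and $T_2$.

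\emph{Rescaling.} The lattice $\mathsf\Lambda'$ has covolume $\|a_1\|^{-1}\|c_n\|'$, and the dual side behaves reciprocally, so the product of the two covolumes stays $1$. I will rescale $\mathsf\Lambda'$ to covolume $1$, with the rescalings of $T_1$ and $T_2$ going in opposite directions. The product $T_1T_2$ changes only by a bounded factor, because $\|a_1\|\,\|c_n\|$ is bounded above by $T_1T_2$. This is where the reduction must be done carefully. Applying the theorem in dimension $n-2$ then contributes $(T_1T_2)^{\lfloor (n-2)^2/4\rfloor+\varepsilon}$. The base cases are $n=1$, which is trivial, and $n=2$, where the count is $\ll (T_1T_2)^{1+\varepsilon}$ and follows from the step below.

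\emph{Main obstacle: counting the pairs $(a_1,c_n)$.} The key estimate needed is
$$\#\{(a,c)\in\mathsf\Lambda\times\mathsf\Lambda^\sharp:\ \|a\|\le T_1,\ \|c\|\le T_2,\ \langle h^{-t}c,a\rangle=0\}\ll (T_1T_2)^{n-1+\varepsilon},$$
uniformly in the lattices. (The analogous constraint $\langle h^{-t}c,a\rangle=1$ would also suffice.)

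A naive count fails because $\#(\mathsf\Lambda\cap B(T_1))$ can be as large as $\asymp\prod_i\max(1,T_1/\lambda_i)$, which is unbounded if $\lambda_1$ is tiny. The way around this is to exploit the coupling between the two lattices. A short vector of $\mathsf\Lambda$ forces, through the determinant relation and the $\preceq$-monotonicity of determinants \eqref{eq:matrixorderimpliesdet}, a long direction that the $c_i$ must span. Concretely, I will apply \eqref{eq:matrixorderimpliesdet} to the Gram matrices $A^tA\preceq nT_1^2I$ and $(A^{-1}h)(A^{-1}h)^t\preceq nT_2^2I$, restricted to the flags spanned by the vectors chosen so far. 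This shows that the sublattice spanned by the chosen $a$'s has covolume at most $T_1^{k}$, and that its annihilator on the $\sharp$-side has covolume at most $T_2^{n-k}$.

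I would first try to bound the number of pairs by splitting dyadically according to $\|a\|$. For fixed $a$, the admissible $c$ lie in the rank-$(n-1)$ lattice $\mathsf\Lambda^\sharp\cap(h^{-1}a)^\perp$, whose covolume is $\asymp\|h^{-1}a\|$ up to the dual relation. I would then use the successive-minima bound $\ll\prod_i\max(1,T/\lambda_i)$ together with $\lambda_i(\mathsf\Lambda^*)\asymp\lambda_{n+1-i}(\mathsf\Lambda)^{-1}$. The unbounded contributions from small minima of $\mathsf\Lambda$ should then be compensated by the smallness of the $c$-count.

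If this pairwise accounting does not close, the fallback is to choose the primitive rank-$k$ sublattice $L=\langle a_1,\dots,a_k\rangle$, with $k=\lfloor n/2\rfloor$, all at once. One would then count such $L$ with $\mathrm{covol}(L)\le T_1^k$ and $\mathrm{covol}(L^{\perp,\sharp})\le T_2^{n-k}$, using the pair-count of the previous step, and afterwards count bases within $L$ and within $L^{\perp,\sharp}$ separately.
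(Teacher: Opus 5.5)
There is a genuine gap, and it lies in the induction step rather than in the pair count you flag as the main obstacle. Fixing $(a_1,c_n)$ and a basis of $\mathsf\Lambda'=(\mathsf\Lambda\cap b_n^\perp)/\Z a_1$ does not determine $U$. You still have to choose lifts of $a_2,\dots,a_{n-1}$, each determined only modulo $\Z a_1$. You also have to choose the last column $a_n$, which $\langle b_n,a_n\rangle=1$ fixes only modulo $\mathsf\Lambda\cap b_n^\perp$, and $c_1,\dots,c_{n-1}$ change accordingly. Note that $a_n$ and $c_1$ never enter your count. The fibres of $U\mapsto(a_1,c_n,\text{basis of }\mathsf\Lambda')$ are therefore cosets of the integer points of the unipotent radical of the $(1,n-2,1)$ parabolic, which has $2n-3$ integer parameters, and these fibres can contain many admissible points. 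For instance, take $n=3$, $g=h=I_3$ and $T_1=T_2=T$. Every
\[
U=\begin{pmatrix}1&x&y\\0&1&z\\0&0&1\end{pmatrix},\qquad |x|,|z|\le\sqrt{T/2},\quad |y|\le T/2,
\]
satisfies $|U|\le T$ and $|U^{-1}|\le T$, and has $a_1=e_1$, $b_3=e_3$. So this single pair carries $\gg T^2$ elements, whereas your scheme assigns it the count $1$ coming from the trivial $\SL_1$ problem. Already for $n=2$ the fibre over $a_1$ is $\{a_2^0+ka_1\}$, with about $1+\min(T_1/\|a_1\|,T_2/\|c_2\|)$ admissible points, so your base case does not follow from the pair count either. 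Since $(n-1)+\lfloor (n-2)^2/4\rfloor=\lfloor n^2/4\rfloor$ already uses up the whole exponent, there is no room for any fibre factor. What you would actually need is a weighted estimate: each pair $(a_1,c_n)$ must be counted with the number of admissible points in its fibre, and pairs with short $a_1$ or short $c_n$ (which have long fibres) must be shown to be correspondingly rare. That is the real content of the theorem, and it is missing from the proposal.

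The displayed pair estimate is also false as stated, ``uniformly in the lattices''. Take $n=3$, $h=I_3$ and $\mathsf\Lambda=\mathrm{diag}(\delta,\delta,\delta^{-2})\Z^3$. Every primitive $a=(\delta x_1,\delta x_2,0)$ is orthogonal to the primitive vector $c=(0,0,\delta^2)$ of $\mathsf\Lambda^\sharp$, which gives $\gg(T_1/\delta)^2$ pairs for fixed $T_1,T_2$. Without primitivity, $c=0$ alone already gives $\#(\mathsf\Lambda\cap B(T_1))$. You would have to build in that $\mathsf\Lambda$ and $\mathsf\Lambda^\sharp$ admit bases of norm $\ll T_1$ and $\ll T_2$ respectively, and the compensation argument itself is left at ``should then be compensated''.

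For comparison, the paper avoids discrete bookkeeping entirely. It packs the disjoint sets $gU\mathcal{O}$ into $E_h(2nT_1,2nT_2)$, so it suffices to bound a Haar volume, and that volume automatically integrates over the unipotent directions your induction drops. In Cartan coordinates, $|g|\le T_1$ gives $t_1\le\log(n^2T_1)$. The condition $|g^{-1}h|\le T_2$ becomes $(nT_2)^{-2}a_{\mathbf s}^2\preceq k a_{\mathbf t}^2k^t$. Its $K$-measure is bounded uniformly in $h$ by restricting to the last $n-j$ coordinates, taking determinants, and applying Markov's inequality to Chikuse's matrix angular central Gaussian density. What remains is a linear optimisation over $H^+$, which yields the exponent $\lfloor n^2/4\rfloor$.
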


Whenever $T_1 < (n!)^{-1/n}$ or $T_2 < (n!)^{-1/n}$, the set 
\[
\left\{
    U \in \SL_n(\Z):~|gU| \le T_1,~|(gU)^{-1}h|\le T_2 
    \right\} 
\]
is empty, 
so it suffices to prove Theorem~\ref{thm:uniformlatticecount} for  $T_1,T_2 \ge (n!)^{-1/n}$. Indeed, taking $A \in \{gU, (gU)^{-1}h \}$, we have $1 = |\det(A)| \le n! |A|^n$ and thus 
\begin{equation}\label{eq:lowerboundsuchthatEhnonempty}
    |A|\ge (n!)^{-1/n}.
\end{equation}

We first bound the volume of a related subset of $\SL_n(\R)$. For $h \in \GL_n(\R)$ and parameters $T_1,T_2 \ge 0$, we set 
\[
    E_h(T_1, T_2) = 
    \{
    g \in \SL_n(\R):~|g|\le T_1, ~ |g^{-1}h|\le T_2
    \}.
\]
Comparing with $E_{X,Y}(T)$, introduced in \eqref{def:kappa}, we note that
$E_{I_n,h}(T) = E_h(T,T)$.
By covering each $U \in \SL_n(\Z)$ such  that $|gU|\le T_1$ and $|(gU)^{-1}h| \le T_2$ 
by disjoint patches of the same positive volume, we will be able to  compare the discrete count in Theorem~\ref{thm:uniformlatticecount} with the volume of $E_{h}(T_1,T_2)$.

\begin{lem}\label{lem:volumeuniformbound}
    Let $h \in \GL_n(\R)$ with $|\det(h)| =1$ and let $T_1,T_2\geq 0$. Then 
    \[
    \vol (E_h(T_1,T_2)) \ll_{\varepsilon} (T_1 T_2)^{\lfloor n^2/4 \rfloor+\varepsilon},
    \]
    for any $\eps>0$.
\end{lem}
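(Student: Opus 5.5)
The plan is to pass from $g$ to the positive definite matrix $P=gg^t$, turn the two height conditions into a two-sided matrix inequality for $P$, and then bound the Haar volume in Cartan coordinates by a linear-programming estimate.

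\textbf{Reduction to a sandwich inequality.} I first remove the dependence on $h$ beyond its singular values. Write $h=k_3a_{\mathbf s}k_4$ by the Cartan decomposition \eqref{eq:cartan_decomp}, after absorbing a sign if $\det h=-1$. Replacing $g$ by $k_3g$ preserves $\vol$ and changes $|g|$ and $|g^{-1}h|$ only by factors depending on $n$, by \eqref{eq:basic_infty_norm_inequ}. So it suffices to treat $h=a_{\mathbf s}$ with $\mathbf s\in H^+$, at the cost of replacing $T_1,T_2$ by $O(T_1),O(T_2)$.

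Since the sup norm and the operator norm are comparable, the conditions $|g|\le T_1$ and $|g^{-1}h|\le T_2$ become, in the order \eqref{def:matrixorder},
\[
c\,T_2^{-2}\,hh^t\preceq gg^t\preceq C\,T_1^2 I_n ,
\]
for constants $c,C>0$ depending only on $n$. Because $g=P^{1/2}k$ with $k\in K$, the Haar measure of $E_h(T_1,T_2)$ is at most the $\SL_n(\R)$-invariant measure of the set of $P=k\,a_{2\mathbf t}\,k^{-1}$ (with $\mathbf t\in H^+$, $k\in K$) lying in this order interval. By \eqref{eq:cartanvol}, this measure is bounded by
\[
\int_K\int_{H^+}\mathbf 1\bigl[\,c\,T_2^{-2}a_{2\mathbf s}\preceq k a_{2\mathbf t}k^{-1}\preceq C T_1^2 I_n\,\bigr]\,\exp\Bigl(\sum_{i<j}(t_i-t_j)\Bigr)\,\mathrm d\mathbf t\,\mathrm dk .
\]

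\textbf{Linear programming.} The upper constraint gives $t_1\le \log T_1+O(1)$. For the lower constraint, I apply \eqref{eq:matrixorderimpliesdet} to compressions onto the span $V$ of the eigenvectors of $P$ with the $j$ smallest eigenvalues, for each $j$. This should give
\[
e^{2(t_{n-j+1}+\cdots+t_n)}\ \ge\ (cT_2^{-2})^{j}\,\det\bigl(a_{2\mathbf s}|_V\bigr).
\]
Combined with $\det(a_{2\mathbf s})=1$ and the upper constraint, the hope is to deduce $t_n\ge -\log T_2-O(1)$ \emph{outside a set of $k$ of small measure}, with a loss compensated by the Haar weight.

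Granting the effective constraints $t_1\le a:=\log T_1+O(1)$ and $t_n\ge -b$ with $b:=\log T_2+O(1)$, the bound is a linear program. Write $d_i=t_i-t_{i+1}\ge0$, so that $F(\mathbf t)=\sum_{i<j}(t_i-t_j)=\sum_i i(n-i)d_i$. The polytope $\{\mathbf t\in H^+: t_1\le a,\ t_n\ge -b\}$ has vertices $\mathbf t=(a,\dots,a,-b,\dots,-b)$, with $k$ entries equal to $a$ and $ka=(n-k)b$. At such a vertex $F=k(n-k)(a+b)\le\lfloor n^2/4\rfloor(a+b)$. Integrating $e^{F}$ over this polytope, whose volume is polynomial in $a+b$, gives
\[
\vol(E_h(T_1,T_2))\ll (T_1T_2)^{\lfloor n^2/4\rfloor}\bigl(1+\log(T_1T_2)\bigr)^{n-1}\ll_\varepsilon (T_1T_2)^{\lfloor n^2/4\rfloor+\varepsilon}.
\]
This is the claimed bound, since $T_1,T_2\gg1$ by \eqref{eq:lowerboundsuchthatEhnonempty}.

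\textbf{Main obstacle.} The difficulty is uniformity in $h$: when $\mathbf s$ is large, the lower constraint does not pin down $t_n$ for every $k$. For instance, $g=a_{\mathbf s}$ itself gives $g^{-1}h=I_n$. The integration over $k$ must therefore carry the argument. My first attempt is to use the compressed determinant inequalities for all $j$ simultaneously, giving lower bounds on every partial sum $t_{n-j+1}+\dots+t_n$ in terms of $\mathbf s$, the relative position $k$ and $T_2$. I would then redo the linear program with these extra constraints, together with $t_1\le a$, and check that $\mathbf s$ cancels, so the vertex value of $F$ is still at most $\lfloor n^2/4\rfloor(a+b)$.

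If that fails, I would fall back on symmetry. Since $h=g\cdot(g^{-1}h)$, the roles of $g$ and $g^{-1}h$ can be exchanged. I would split according to which of $\|g\|$ or $\|g^{-1}h\|$ controls each singular direction, bounding each piece by the same linear program.
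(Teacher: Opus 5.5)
\textbf{Genuine gap: no bound uniform in $h$.} Your setup matches the paper's: the reduction to $h=a_{\mathbf s}$, the order interval for $gg^t$, the bound $t_1\le\log T_1+O(1)$, and the compressed-determinant inequality. But the proposal stops at the step that carries the lemma. You never obtain information on the small singular values of $g$ that is uniform in $\mathbf s$, and none is available pointwise in $k$.

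Your first fallback cannot work as stated. For fixed $k$, compressing onto the span of $k\mathbf e_{j+1},\dots,k\mathbf e_n$ gives
\[
e^{-2(t_1+\cdots+t_j)}\ \ge\ (cT_2^{-2})^{n-j}\det(U_k^t a_{\mathbf s}^2U_k).
\]
The right-hand determinant can be as small as $e^{2(s_{j+1}+\cdots+s_n)}$, so $\mathbf s$ does not cancel. The symmetry fallback does not help either: it only gives $\vol(E_h(T_1,T_2))=\vol(E_{h^t}(T_2,T_1))$, which still depends on $h$.

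\textbf{The missing ingredient.} You need a bound, uniform in $\mathbf s$, for the Haar measure of the set of $k\in K$ satisfying this compressed inequality. The paper obtains it from Markov's inequality and the exact identity
\[
\int_{\mathbf{St}(n-j,n)}\det(W^ta_{\mathbf s}^2W)^{-n/2}\,\mathrm d\mu_{n-j}(W)=1\qquad(\det a_{\mathbf s}=1),
\]
which is the normalisation of Chikuse's matrix angular central Gaussian density. This bounds the $k$-measure by $(nT_2)^{n(n-j)}e^{-n(t_1+\cdots+t_j)}$ for every $j$, independently of $\mathbf s$.

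\textbf{Second gap: the linear program.} Even with that bound, you get a soft penalty $e^{-nz(\mathbf t)}$, where $z(\mathbf t)=\max\bigl(0,\max_j(t_1+\cdots+t_j-(n-j)\log(nT_2))\bigr)$. This is not a hard constraint $t_n\ge-b$, and the linear program must use all partial sums.

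The $j=n-1$ penalty alone is exactly what your ``$t_n\ge-\log T_2-O(1)$ outside a small set of $k$'' amounts to, and it is not enough. Take $n=9$ and $\mathbf t=(a,\dots,a,-2a,-2a,-2a)$ with six entries equal to $a$. Then $F(\mathbf t)=54a$ and the penalty is $e^{9(t_9+b)}=e^{9b-18a}$. Their product is $e^{36a+9b}$, which exceeds $e^{\lfloor 81/4\rfloor(a+b)}=e^{20(a+b)}$ once $16a>11b$. The growth of the Haar weight has to be beaten by the smallness of the $k$-set, and here the $j=6$ penalty is what does it.

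The paper handles this with the inequality
\[
2\sum_{i=1}^{n-1}\min\bigl(iL_1,\,z+(n-i)L_2\bigr)-nz\ \le\ \lfloor n^2/4\rfloor(L_1+L_2),
\]
proved by a case split on the parity of $n$ (and, for even $n$, on whether $L_1\ge L_2$).

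\textbf{A smaller point.} Your vertex description of $\{\mathbf t\in H^+: t_1\le a,\ t_n\ge-b\}$ is inaccurate: generically no integer $k$ satisfies $ka=(n-k)b$, and vertices have an intermediate block. The bound $F\le\lfloor n^2/4\rfloor(a+b)$ still follows at once from $F=\sum_i i(n-i)d_i$ and $\sum_i d_i=t_1-t_n\le a+b$.
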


\begin{proof}
    After possibly replacing $h$ by $h \mathrm{diag}(-1,1,\ldots,1)$, which leaves the value of $| g^{-1} h |$ invariant, we may assume that $h \in \SL_n(\R)$. 
    Moreover, as in  \eqref{eq:lowerboundsuchthatEhnonempty}, we see that $E_h(T_1,T_2)$ is empty   whenever $ T_1 < (n!)^{-1/n}$ or $T_2 < (n!)^{-1/n}$. Hence we may assume that $T_1, T_2 \ge (n!)^{-1/n}$. This implies that $\log(nT_1)>0$  and $\log(nT_2)>0$, 
since      $(n!)^{-1/n} > n^{-n/n} =n^{-1}$ for $n \ge 2$, 
    a
fact that we shall tacitly use throughout the proof. 
   
    Let us write $g = k_1 a_{\mathbf{t}}k_2$, as in \eqref{eq:cartan_decomp}, where $k_1, k_2 \in K$ and $\mathbf{t} =(t_1,\ldots, t_n) \in H^+$. The condition $ |g| \le T_1$ implies  that
    \begin{equation}\label{eq:condfromT1}
        t_1 \le \log(n^2T_1).
    \end{equation}
    Indeed, for $g \in E_h(T_1,T_2)$,  it follows from \eqref{eq:basic_infty_norm_inequ} that 
    \[
    \frac{1}{n^2} e^{t_1} = \frac{1}{n^2} |a_\mathbf{t}| \le |k_1 a_\mathbf{t}k_2| = |g| \le T_1.
    \]

For $g \in E_h(T_1,T_2)$ and for all $\y \in \R^n$, we have 
    \[
    \| (g^{-1}h)^t \y \|_2^2 = \y^t (g^{-1}h)(g^{-1}h)^t\y  \le (nT_2)^2  \| \y \|_2^2;
    \]
    choosing $\y = g^{t}\x$ gives  the condition
    \[
    (nT_2)^{-1} \| h^t \x \|_2 \le \| g^t\x\|_2 ,
    \]
    for all $\x \in \R^n$.   
    Writing $g = k_1 a_\mathbf{t} k_2$ as before, and decomposing $h$ via \eqref{eq:cartan_decomp} as $h = \ell_1 a_{\mathbf{s}} \ell_2$, gives
    \[
    \| g^t \x \|_2 = \| k_2^t a_{\mathbf{t}} k_1^t \x \|_2 = \| a_{\mathbf{t}} k_1^t \x \|_2,
    \qquad  \| h^t \x \|_2 = \| a_{\mathbf{s}} \ell_1^t \x \|_2.
    \]
    Here we used the fact that $k_2 \in K =\SO_n(\R)$, so that $k_2 k_2^t = I_n$.
It follows that the condition $
     (nT_2)^{-1}\|h^t\x  \|_2 \le \| g^t \x\|_2$ is equivalent to $(nT_2)^{-1}\|a_{\mathbf{s}} \ell_1^t \x \|_2 \le \| a_{\mathbf{t}} k_1^t \x\|_2.$
     Under the change of variables $\z = \ell_1^t \x$, this yields 
     \[
     (nT_2)^{-1}\|a_{\mathbf{s}} \z \|_2 \le \| a_{\mathbf{t}} k_1^t (\ell_1^{-1})^{t} \z \|_2.
     \]
     In the notation introduced in \eqref{def:matrixorder}, this can  equivalently be expressed as 
     \begin{equation}\label{eq:condfromT2}
              (nT_2)^{-2} a_{\mathbf{s}}^2 \preceq \ell_1^{-1} k_1 a_{\mathbf{t}}^2 k_1^t (\ell_1^{-1})^{t}.
     \end{equation}

    Using the fact that $K$ is equipped with a probability measure, so that $\vol (K)=1$, it follows from combining \eqref{eq:cartanvol}  with \eqref{eq:condfromT1} and \eqref{eq:condfromT2} that
\begin{align}
    \vol(E_h(T_1,T_2)) 
    &\ll \int_K \int_{H^{+}}\int_K \mathbf{1}_{[ k_1 a_{\mathbf{t}} k_2 \in E_h(T_1,T_2)]} \prod_{i<j} \sinh(t_i - t_j) \mathrm{d} k_2 \mathrm{d}\mathbf{t}\mathrm{d}k_1  \nonumber
    \\ 
    &\ll
     \int_K \int_{H^{+}}  \mathbf{1}_{[  (nT_2)^{-2} a_{\mathbf{s}}^2 \preceq \ell_1^{-1} k_1 a_{\mathbf{t}}^2 k_1^t (\ell_1^{-1})^{t} ]} \mathbf{1}_{[t_1 \le  \log(n^2 T_1)]}\prod_{i<j} \sinh(t_i - t_j)\mathrm{d}\mathbf{t}\mathrm{d}k_1 \nonumber
     \\
     &=
      \int_{H^{+}} 
      \mathbf{1}_{[t_1 \le \log(n^2T_1)]} \prod_{i<j} \sinh(t_i - t_j)
      \int_K  \mathbf{1}_{[ (nT_2)^{-2} a_{\mathbf{s}}^2 \preceq  k a_{\mathbf{t}}^2 k^t ]} \mathrm{d}k \mathrm{d}\mathbf{t}, \label{eq:volumeEh_intermediatestep}
\end{align}
where, in the final line, we exchanged the order of integration and replaced $\ell_1^{-1} k_1$ by $k$ via a measure-invariant change of variables. Observe that the factors occurring in the density are of the form
\[
    \sinh(t_i - t_j) = \frac{e^{t_i-t_j}-e^{t_j-t_i}}{2} \ge 0.
\]

We proceed by  showing that 
\begin{equation}\label{eq:Kintegraljbound}
        \int_K  \mathbf{1}_{[ (nT_2)^{-2} a_{\mathbf{s}}^2 \preceq  k a_{\mathbf{t}}^2 k^t ]} \mathrm{d}k  \le \min \left(1, \min_{1\le j \le n-1} ((nT_2)^{n(n-j)} e^{-n(t_1 + \cdots + t_j)}) \right),
\end{equation}
a bound that is  uniform in $\mathbf{s}$. The first term in the minimum is obvious since $\vol (K)=1$. Fix $j \in \{1, \ldots, n-1\}$. By the change of variables $\x = k \y$ of the underlying quadratic form, the condition $ (nT_2)^{-2} a_{\mathbf{s}}^2 \preceq  k a_{\mathbf{t}}^2 k^t $ is equivalent to 
\begin{equation}\label{eq:reformulatedpreceq}
       (nT_2)^{-2} k^t a_{\mathbf{s}}^2 k \preceq  a_{\mathbf{t}}^2 = \mathrm{diag}(e^{2t_1}, \ldots, e^{2t_n}).
\end{equation}
 Denoting by $\mathbf{e}_i$ the canonical basis vectors of $\R^n$, let $U_{k,j}$ be the $n \times (n-j)$ matrix whose columns are $k\mathbf{e}_{j+1}, \ldots, k\mathbf{e}_{n}$. Then \eqref{eq:reformulatedpreceq} yields 
\begin{equation}\label{eq:neqpreceqforj}
          (nT_2)^{-2} U_{k,j}^t a_{\mathbf{s}}^2 U_{k,j} \preceq   \mathrm{diag}(e^{2t_{j+1}}, \ldots, e^{2t_n}),
\end{equation}
where $U_{k,j}^t a_{\mathbf{s}}^2 U_{k,j} $ is an $(n-j)\times (n-j)$ matrix. 
This follows from restricting the inequality $ (nT_2)^{-2} \x^t k^t a_{\mathbf{s}}^2 k \x \le \x^t a_{\mathbf{t}}^2 \x $ to the span of
$\mathbf{e}_{j+1}, \ldots, \mathbf{e}_{n}$. 
Note that since $k \in K = \SO_n(\R)$, its columns are orthonormal and thus $U_{k,j}^t U_{k,j} = I_{n-j}$.
By \eqref{eq:matrixorderimpliesdet}, the relation
\eqref{eq:neqpreceqforj} implies that
\[
    (nT_2)^{-2(n-j)} \det(U_{k,j}^ta_{\mathbf{s}}^2 U_{k,j}) \le e^{2 (t_{j+1}+\cdots+t_n) } = e^{-2(t_1 + \cdots + t_j)},
\]
since $t_1 + \cdots + t_n = 0$ in the  definition  \eqref{eq:defH+}
of $H^+$. 
Hence
\[
\int_K  \mathbf{1}_{[ (nT_2)^{-2} a_{\mathbf{s}}^2 \preceq  k a_{\mathbf{t}}^2 k^t ]} \mathrm{d}k  \le 
\int_K  \mathbf{1}_{\big[ (nT_2)^{-2(n-j)} \det(U_{k,j}^ta_{\mathbf{s}}^2 U_{k,j}) \le e^{-2(t_1 + \cdots + t_j)}\big]} \mathrm{d}k.
\]

Let us denote by
\[
    \mathbf{St}(n-j,n) = 
    \{ U \in \mathcal{M}_{n \times (n-j)}(\R): U^t U = I_{n-j}
    \},
\]
the so-called Stiefel manifold of orthonormal $(n-j)$-frames of $\R^n$.
The surjection
\[
    \pi_{n-j}: K \to \mathbf{St}(n-j,n), \quad k \mapsto U_{k,j}
\]
induces a $K$-invariant probability measure $\mu_{n-j}$ on $\mathbf{St}(n-j,n)$, by setting 
\[
    \mu_{n-j} (S) = \int_{K} \mathbf{1}_{\pi_{n-j}^{-1}(S)} \mathrm{d}k,
\]
whenever $\pi_{n-j}^{-1}(S)$ is measurable. 

We now set
\[
    S_{n-j} = \{ k \in K:~(nT_2)^{-2(n-j)} \det(U_{k,j}^ta_{\mathbf{s}}^2 U_{k,j}) \le e^{-2(t_1 + \cdots + t_j)} \}.
\]
Since $S_{n-j}$ only depends on $U_{k,j}$, we have
$ \pi_{n-j}^{-1}(\pi_{n-j}(S_{n-j})) = S_{n-j}$, whence
\[
   \int_K  \mathbf{1}_{\big[ (nT_2)^{-2(n-j)} \det(U_{k,j}^ta_{\mathbf{s}}^2 U_{k,j}) \le e^{-2(t_1 + \cdots + t_j)}\big]} \mathrm{d}k  =  \mu_{n-j} (\pi_{n-j}(S_{n-j})).
\]
We observe that 
\begin{align*}
        \pi_{n-j}(S_{n-j}) 
        &= \{ U \in \mathbf{St}(n-j,n):~\det(U^ta_\mathbf{s}^2U) \le (nT_2)^{2(n-j)} e^{-2(t_1 +\cdots+t_j)} \} \\
        &= \{ U \in \mathbf{St}(n-j,n):~\det(U^ta_\mathbf{s}^2U)^{-n/2} \ge (nT_2)^{-n(n-j)} e^{n(t_1 +\cdots+t_j)} \}.
\end{align*}
Moreover, according to Chikuse \cite[Theorem~2.2]{chikuse_macg1990}, for any symmetric positive definite matrix $A$, the function $W \mapsto \det(A)^{-(n-j)/2} \det(W^t A^{-1} W)^{-n/2}$ is a probability  density with respect to the $K$-invariant probability measure on $\mathbf{St}(n-j,n)$. In particular, this implies that
\[
    \det(A)^{-(n-j)/2}
    \int_{\mathbf{St}(n-j,n)}  \det(W^t A^{-1} W)^{-n/2}\mathrm{d}\mu_{n-j}(W) =1.
\]
Employing this with $A = a_{\mathbf{s}}^{-2}$ in combination with Markov's inequality and the fact that $\det(a_\mathbf{s})=1$ gives
\begin{align*}
        \mu_{n-j}(\pi_{n-j}(S_{n-j})) &\le (nT_2)^{n(n-j)} e^{-n(t_1 +\cdots+t_j)} \int_{\mathbf{St}(n-j,n)} \det(W^ta_\mathbf{s}^2 W)^{-n/2} \mathrm{d}\mu_{n-j}(W) \\
        &= (nT_2)^{n(n-j)} e^{-n(t_1 +\cdots+t_j)}, 
\end{align*}
which finally completes the proof of  \eqref{eq:Kintegraljbound}.

Returning to \eqref{eq:volumeEh_intermediatestep}, it follows from  \eqref{eq:Kintegraljbound} that 
\[
    \vol(E_h(T_1,T_2)) \ll \int_{H^+}  \mathbf{1}_{[t_1 \le \log(n^2T_1)]} \prod_{i<j} \sinh(t_i - t_j) e^{-n z(\mathbf{t})}\mathrm{d}\mathbf{t},
\]
where we set 
\[
    z(\mathbf{t}) = \max \left(0, \max_{1 \le j \le n-1} (t_1 + \cdots+ t_j - (n-j)\log(n T_2)) 
    \right).
\]
Moreover, we have 
\[
     \prod_{i<j} \sinh(t_i - t_j)
     = 
     \prod_{i<j} \frac{e^{t_i - t_j}-e^{t_j-t_i}}{2}
     \ll e^{\sum_{i<j}(t_i-t_j)}.
\]

For any $\mathbf{t} \in H^+$, we  claim that
\begin{align*}
        \sum_{i<j}(t_i-t_j) 
        = 2\sum_{i=1}^{n-1} \sum_{k=1}^i t_k. 
\end{align*} 
To see this, we collect the coefficients of the $t_k$ on the left hand side to get 
\[
    \sum_{i < j} (t_i - t_j) = \sum_{k=1}^{n} ((n-k)-(k-1)) t_k = \sum_{k=1}^{n} (n-2k + 1) t_k,
\]
since $t_k$ occurs positively in $n-k$ pairs $(k,j)$ with $j >k$, and negatively in the $k-1$ pairs with $i<k$. On the other hand, 
since $\sum_{k=1}^n t_k = 0$, 
we see that 
\[
2\sum_{i=1}^{n-1} \sum_{k=1}^i t_k=
    2\sum_{i=1}^n \sum_{k=1}^i t_k = 2 \sum_{k=1}^n \sum_{i=k}^n t_k = 2 \sum_{k=1}^n (n-k+1)t_k.
\]
The claim follows on subtracting $(n+1)\sum_{k=1}^n t_k = 0$ from the last expression.
Turning to the condition $\log(n^2T_1) \ge t_1$, we note that 
$\log(n^2T_1) \ge t_1 \ge t_2 \ge \cdots \ge t_n$ if  $\mathbf{t}\in H^+$, whence
\[
   t_1 + \cdots + t_i \le \min \left( 
   i \log(n^2T_1), z(\mathbf{t}) + (n-i) \log(nT_2)
   \right).
\]
Consequently, we have  
\begin{align*}
        \vol(E_h(T_1, T_2)) &\ll \int_{H^+}  \mathbf{1}_{[t_1 \le \log(n^2T_1)]} e^{2 \sum_{i=1}^{n-1} (t_1 + \cdots+ t_i)-n z(\mathbf{t})}\mathrm{d}\mathbf{t} \\
        &\ll
        \int_{H^+}  \mathbf{1}_{[t_1 \le \log(n^2T_1)]} e^{2 \sum_{i=1}^{n-1} \min \big( 
   i \log(n^2T_1), z(\mathbf{t}) + (n-i) \log(nT_2)
   \big) -n z(\mathbf{t})}\mathrm{d}\mathbf{t} .
\end{align*}
Setting $L_1 = \log(n^2T_1)$ and $L_2 = \log(nT_2)$, we will show that 
\begin{align}\label{eq:wherensquaredoverfourappears}
    2 \sum_{i=1}^{n-1} \min \left( 
   i L_1, z(\mathbf{t}) + (n-i) L_2
   \right) -n z(\mathbf{t}) \le \left\lfloor \frac{n^2}{4}\right\rfloor ( L_1 + L_2 ).
\end{align}
This will yield 
\[
    \vol(E_h(T_1, T_2)) \ll e^{ \left\lfloor n^2/4 \right\rfloor ( L_1 + L_2 ) } 
    \int_{H^+} \mathbf{1}_{[t_1 \le L_1]} \mathrm{d}\mathbf{t} \ll_{\varepsilon} (T_1  T_2)^{ \left\lfloor n^2/4 \right\rfloor + \varepsilon },
\]
since the volume of the remaining truncated cone is 
\[
    \vol \{ \mathbf{t} \in H^+: t_1 \le L_1 \} \ll (1+ L_1)^{n-1} \ll_{\varepsilon} T_1^\varepsilon.
\]

In  order to finish our proof, it  remains to establish \eqref{eq:wherensquaredoverfourappears}. We distinguish the cases of even and odd $n$.
Let us first assume that $n=2m+1$ is odd. Then 
\begin{align*}
    2 \sum_{i=1}^{n-1} \min \left( 
   i L_1, z(\mathbf{t}) + (n-i) L_2
   \right) -n z(\mathbf{t}) &\le 2  \sum_{i=1}^m i L_1 + 2\!\!\sum_{i=m+1}^{2m} \!(z(\mathbf{t}) + (n-i) L_2)  -n z(\mathbf{t}) \\
   &= m(m+1)(L_1 + L_2) - z(\mathbf{t})
   \\
   &\le m(m+1)(L_1 + L_2) 
   \\
   & = \Big\lfloor \frac{n^2}{4} \Big\rfloor (L_1 + L_2).
\end{align*}
Now, let us assume that $n = 2m$ is even. If $L_1 \ge L_2$, then 
\begin{align*}
      2 \sum_{i=1}^{n-1} \min \left( 
   i L_1, z(\mathbf{t}) + (n-i) L_2
   \right) -n z(\mathbf{t}) &\le 2 \sum_{i=1}^{m-1} i L_1 + 2\!\sum_{i=m}^{2m-1} (z(\mathbf{t}) + (n-i) L_2)  -n z(\mathbf{t}) \\
   &= m(m-1) L_1 + m(m+1)L_2 \\
   &\le m^2 (L_1 + L_2)\\
   &= \Big\lfloor \frac{n^2}{4} \Big\rfloor (L_1 + L_2).
\end{align*}
If $L_2 \ge L_1$, we instead have 
\begin{align*}
      2 \sum_{i=1}^{n-1} \min \left( 
   i L_1, z(\mathbf{t}) + (n-i) L_2
   \right) -n z(\mathbf{t}) &\le 2 \sum_{i=1}^{m} i L_1 + 2\!\!\sum_{i=m+1}^{2m-1} \!(z(\mathbf{t}) + (n-i) L_2)  -n z(\mathbf{t}) \\
   &= m(m+1) L_1 + m(m-1)L_2 -2z(\mathbf{t}) \\
   &\le m^2 (L_1 + L_2)\\
   &= \Big\lfloor \frac{n^2}{4} \Big\rfloor (L_1 + L_2). \qedhere
\end{align*}
\end{proof}

\begin{proof}[Proof of Theorem~\ref{thm:uniformlatticecount}]
    Let us fix $g,h \in \GL_n(\R)$ with $|\det(g)|=|\det(h)|=1$. On possibly replacing $(g,h)$ by $ (\mathrm{diag}(-1,1,\ldots,1)g,  \mathrm{diag}(-1,1,\ldots,1)h) $, which leaves the relevant inequalities unchanged, we may assume $\det(g)=1$.
   We  choose a compact neighbourhood  $\mathcal{O} \subset \SL_n(\R)$ of the identity,
    with the property that the  translates $U \mathcal{O}$, $U \in \SL_n(\Z)$, are pairwise disjoint.  Concretely, we may take
\begin{align*}
    \mathcal{O} = 
    \left\{ 
    o \in \SL_n(\R):
    | 
    I_n - o
    | \le 1/(2n+2), \ 
    | 
    I_n - o^{-1}
    | \le 1/(2n+2)
    \right\}.
\end{align*}
We claim that  the translates 
$U \mathcal{O}$ are pairwise disjoint
for $U \in \SL_n(\Z)$.
Indeed, 
assume that there are 
$U_1, U_2 \in \SL_n(\Z)$ and $o_1, o_2 \in \mathcal{O}$ with $U_1 o_1 = U_2 o_2$. Then we have 
\[
   o_2 o_1^{-1} =  U_2^{-1} U_1 \in \SL_n(\Z).
\]
On the other hand, we have  $|o_2|\le |o_2 - I_n|+|I_n| \le 2$ and
\[
    |I_n - o_2 o_1^{-1}| \le |I_n - o_2| + |o_2 - o_2 o_1^{-1}| \le \frac{1}{2n+2} + n|o_2||I_n -o_1^{-1}| \le \frac{2n+1}{2n+2} < 1.
\]
Since $o_2 o_1^{-1} \in \SL_n(\Z)$, the matrix $I_n - o_2o_1^{-1}$ is an integer matrix with 
 norm  strictly less than $1$. Thus  $o_1 = o_2$, which implies that  $U_1 = U_2$. 

We clearly have  $|o|,|o^{-1}| \le 2$ for all $o \in \mathcal{O}$. For $U \in \SL_n(\Z)$ with $gU \in E_h(T_1, T_2)$ and $o \in \mathcal{O}$, we have  
\[
    |gUo| \le n |gU||o| \le 2n|gU|, 
\]
and 
\[
    |(gUo)^{-1}h| = |o^{-1}(gU)^{-1}h| \le n |o^{-1}||(gU)^{-1}h| \le2 n |(gU)^{-1}h|.
\]
Thus  it follows that 
\[
    gU \mathcal{O} \subseteq E_{h}(2nT_1, 2nT_2).
\]
Since the sets $U \mathcal{O}$, $U \in \SL_n(\Z)$, are disjoint, so are their translates $gU \mathcal{O}$, $U \in \SL_n(\Z)$. 
Therefore we get 
\[
    \vol(\mathcal{O}) \#\{ U \in \SL_n(\Z):~ gU \in E_{h}(T_1,T_2)\} \le \vol(E_{h}(2nT_1,2nT_2)). 
\]
Since $\vol(\mathcal{O})\gg 1$, Lemma~\ref{lem:volumeuniformbound} yields  
\[
    \#\{ U \in \SL_n(\Z):~ gU \in E_{h}(T_1,T_2)\} \ll_{\varepsilon} (T_1 T_2)^{\lfloor n^2/4 \rfloor+\varepsilon} ,
\]
which clearly completes the proof.
\end{proof}

\subsection{Conclusion} \label{section:uniform_proof}

We now employ the results from Section~\ref{section:intermediate_lattices} and Section~\ref{section:boundingnumberofbases} to prove the desired uniform upper bound.

\begin{proof}[Proof of Theorem~\ref{thm:uniform}]
    Let $M\in \mathcal{M}_n(\Z)$ be non-singular.
    Our starting point is \eqref{eq:chicken}, which gives
$$
        \tau_n(T,M) 
    =
    2
    \sum_{M\Z^n \subseteq \mathsf{\Lambda} \subseteq \Z^n} \# \{ 
    U \in \SL_n(\Z):  | B_\mathsf{\Lambda} U | \le T, \: | U^{-1}B_\mathsf{\Lambda}^{-1}M|\le T
    \},
    $$
where
$B_\mathsf{\Lambda}$ is any basis matrix 
associated to the lattice  $\mathsf{\Lambda}$.
Putting  $g_\mathsf{\Lambda} = \det(\mathsf{\Lambda})^{-1/n} B_\mathsf{\Lambda}$ and $h_M = |\det(M)|^{-1/n}M$, we obtain
    \begin{align*}
        \tau_n(T,M) 
    =
    2
    \sum_{M\Z^n \subseteq \mathsf{\Lambda} \subseteq \Z^n} \!\!\! \# \left\{ 
    U \in \SL_n(\Z):  | g_\mathsf{\Lambda} U | \le T_1, \: | (g_\mathsf{\Lambda}U)^{-1}h_M|\le 
    T_2
    \right\},
    \end{align*}
where
$$
T_1=\frac{T}{\det(\mathsf{\Lambda})^{1/n}} , \quad 
T_2=\frac{T\det(\mathsf{\Lambda})^{1/n}}{|\det(M)|^{1/n}}.
$$
Note that  $|\det(g_\mathsf{\Lambda}) |=|\det(h_M)|=1$. Thus  Theorem~\ref{thm:uniformlatticecount} can be applied to bound each summand. 
Applying Proposition~\ref{prop:latticeinclusion}, it now follows that 
    \begin{align*}
          \tau_n(T,M) 
    &\ll_{\varepsilon} 
     \sum_{M\Z^n \subseteq \mathsf{\Lambda} \subseteq \Z^n} T^{2 \lfloor n^2 /4 \rfloor + \varepsilon} |\det(M)|^{- \lfloor n^2 /4 \rfloor/n - \varepsilon/(2n)} \ll_{\varepsilon }  
      T^{ \lfloor n^2 /2 \rfloor + \varepsilon},
    \end{align*}
since 
    $2 \lfloor n^2 /4 \rfloor =  \lfloor n^2 /2 \rfloor$.
\end{proof}

\section{Factorisations of the zero matrix}\label{sec:On}

In this section we prove Theorem \ref{thm:factoring_zero} using an argument inspired by that found in work of 
Katznelson \cite{katznelson}.
Katznelson views the set of singular matrices as the union
\begin{align}\label{eq:katznelsonunion}
    \bigcup_{\boldsymbol{{\lambda}} \in \ZZp^n} \{ A \in \mathcal{M}_n(\Z): A\boldsymbol{{\lambda}} = \mathbf{0}  \},
\end{align}
observing that the relation $A\boldsymbol{{\lambda}} = \mathbf{0}$ means that all the rows of $A$ lie inside the primitive lattice of rank $n-1$ that is orthogonal to $\boldsymbol{\lambda}$. In order to find the number of such matrices inside an expanding convex body, he uses lattice point counting arguments and handles the error produced by the fact that the union in \eqref{eq:katznelsonunion} is not a partition.

In our setting, we observe that the rows of every matrix $A \in \mathcal{M}_n(\Z)$ lie inside a unique primitive $\rank(A)$-dimensional lattice $\Lambda$. For any matrix $B \in \mathcal{M}_n(\Z)$, we have $AB = O_n$ if and only if the columns of $B$ lie inside the orthogonal lattice $\Lambda^\perp$. This allows us to partition the set of solutions $(A,B)$ to the equation $AB = O_n$ as 
\begin{align}\label{eq:partitionlatticesAB}
\bigsqcup_{r=0}^n
    \bigsqcup_{\substack{\Lambda \subset \Z^n \text{ prim.} \\
    \rank(\Lambda)=r}}  
\mathcal{A}_{\Lambda} \times
\mathcal{B}_{\Lambda^\perp},
\end{align}
where we write $\mathcal{A}_{\Lambda} = \{ A \in \mathcal{M}_n(\Z): \rank(A)=\rank(\Lambda),~\text{rows of $A$ lie in $\Lambda$}\} $ and $ \mathcal{B}_{\Lambda^\perp} =
    \{ B \in \mathcal{M}_n(\Z): ~\text{columns of $B$ lie in $\Lambda^\perp$}\}$.
    We then employ geometry of numbers techniques in order to
    count the number of matrices of bounded norm inside $\mathcal{A}_{\Lambda}$ and $\mathcal{B}_{\Lambda^\perp}$, which eventually leads to the asymptotic given in Theorem~\ref{thm:factoring_zero}.

\subsection{Geometry of numbers}

We start by collecting various definitions and lemmas from the geometry of numbers that are needed for our proof. 
Let $n \ge 1$ be a positive integer and let $r \in \{0,\ldots,n\}$. 
For $R \in \{\Z, \Q, \R \}$ and any set $\mathcal{S} \subset \Z^n$, we write 
\[
    \langle \mathcal{S} \rangle_R = \left\{ \sum_{i=1}^m r_is_i: m\ge0,~ r_1,\ldots,r_m \in R,~s_1,\ldots,s_m \in \mathcal{S} \right\}
\]
for the span of $\mathcal{S}$ over $R$. 
We denote by $\mathcal{L}_{n,r}$ the set of all lattices of rank $r$ in $\Z^n$, and we denote by $\mathcal{P}_{n,r}$ the set of all {\it primitive} lattices of rank $r$ in $\Z^n$; i.e.~the set of  $\Lambda \in \mathcal{L}_{n,r}$ with $\Lambda = \langle \Lambda \rangle_{\Q} \cap \Z^n$. 
For $k \le r$, the $k$-th {\it successive minimum} of $\Lambda$ is the minimal radius $\lambda_k$ such that the closed ball of radius $\lambda_k$ centered at the origin contains $k$ linearly independent points of $\Lambda$. By Minkowski's second theorem, we have 
\begin{align}\label{eq:minkowsi2thm}
        \lambda_1 \cdots \lambda_r \ll \det(\Lambda)
    \ll \lambda_1 \cdots \lambda_r,
\end{align}
where the implicit constants only depend on $n$.

 For a lattice $\Lambda \in \mathcal{P}_{n,r}$, we denote by $\Lambda^\perp$ the {\it orthogonal lattice} of 
$\Lambda$; we recall that $\Lambda^\perp$ may be obtained as $\Lambda^\perp = \Z^n \cap V^\perp$, where $V^\perp$ is the subspace of $\Q^n$ orthogonal to the subspace $V = \langle \Lambda \rangle_{\Q}$. The following lemma is classical and is proved in Schmidt \cite[\S 2]{Schm}, for example. 

\begin{lem}\label{lem:detlatticeorthogonal}
        Let $\Lambda \subset \Z^n$ be a primitive lattice. Then 
        $
        \det(\Lambda) = \det(\Lambda^\perp).
        $
    \end{lem}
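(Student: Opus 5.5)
The plan is to compare determinants via the Plücker (exterior power) embedding and then use primitivity to pass to integral vectors. Let $r=\rank(\Lambda)$ and fix a basis $\mathbf{b}_1,\ldots,\mathbf{b}_r$ of $\Lambda$. Then $\det(\Lambda)$ is the Euclidean norm of the integral vector $\mathbf{b}_1\wedge\cdots\wedge\mathbf{b}_r\in\bigwedge^r\Z^n\cong\Z^{\binom{n}{r}}$. This is the Cauchy--Binet formula: $\det(B^tB)$ is the sum of the squares of the $r\times r$ minors of the $n\times r$ basis matrix $B$.

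The first step is to show that primitivity of $\Lambda$ is equivalent to this Plücker vector being primitive, i.e.\ the gcd of the maximal minors of $B$ equals $1$. Since $\Lambda=\langle\Lambda\rangle_\Q\cap\Z^n$, the basis extends to a basis $\mathbf{b}_1,\ldots,\mathbf{b}_n$ of $\Z^n$. Expanding the determinant $\pm1$ of this unimodular matrix by Laplace expansion along the first $r$ columns shows that the $r\times r$ minors of $B$ have gcd $1$. Similarly, with $s=n-r$ and a basis $\mathbf{c}_1,\ldots,\mathbf{c}_s$ of $\Lambda^\perp$, the vector $\mathbf{c}_1\wedge\cdots\wedge\mathbf{c}_s$ is primitive, since $\Lambda^\perp$ is primitive by construction as $\Z^n\cap V^\perp$.

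The second step is to relate the two Plücker vectors. The Hodge star $\ast:\bigwedge^r\R^n\to\bigwedge^{s}\R^n$ is an isometry that maps the standard integral basis to the standard integral basis up to sign, so it preserves integrality and primitivity. The line spanned by $\ast(\mathbf{b}_1\wedge\cdots\wedge\mathbf{b}_r)$ represents the subspace $V^\perp$, and so does the line spanned by $\mathbf{c}_1\wedge\cdots\wedge\mathbf{c}_s$. Both vectors are therefore primitive integral vectors on the same rational line, so they agree up to sign. Taking Euclidean norms gives $\det(\Lambda)=\det(\Lambda^\perp)$. The degenerate cases $r=0$ and $r=n$ are trivial.

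The main obstacle is the identification of $\ast(\mathbf{b}_1\wedge\cdots\wedge\mathbf{b}_r)$ with the subspace $V^\perp$, which has to be verified with sign conventions handled. I would try to do this by checking that for every $\mathbf{x}\in V^\perp$ one has $\mathbf{x}\wedge$-contraction zero, i.e.\ $\mathbf{x}\lrcorner\,\ast\omega=\pm\ast(\mathbf{x}\wedge\omega)=0$. If this route becomes messy, the alternative is a direct argument. Extend $B$ to a unimodular matrix $P=(B\mid C')$, so that $\Lambda^\perp$ is spanned by the last $s$ columns of $(P^{-1})^t$. Jacobi's complementary minor identity then says that the $s\times s$ minors of those columns equal, up to sign, the complementary $r\times r$ minors of $B$ divided by $\det P=\pm1$. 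Applying Cauchy--Binet to both sides then gives $\det(\Lambda)^2=\det(\Lambda^\perp)^2$.
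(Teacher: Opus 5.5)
The paper gives no proof of its own here: it calls the lemma classical and cites \cite[\S2]{Schm}. Your argument is a correct, self-contained proof. Your first route is the standard Grassmann-coordinate proof of this fact. A primitive lattice has a primitive Pl\"ucker vector whose Euclidean norm is its determinant, and the Hodge star permutes the standard integral bases of the exterior powers up to sign while carrying the line of $V$ to the line of $V^\perp$. Your Jacobi route is the same duality written in coordinates, and it is the more economical write-up. It needs neither the primitivity of Pl\"ucker vectors nor any exterior algebra. It does need one fact that you assert without proof, which you should state: the last $s$ columns $\mathbf{q}_{r+1},\ldots,\mathbf{q}_n$ of $Q=(P^{-1})^t$ form a $\Z$-basis of $\Lambda^\perp$. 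They lie in $\Lambda^\perp$ because $Q^tP=I_n$ makes them orthogonal to the columns of $B$. Conversely, since $Q$ is unimodular you can write any $\mathbf{y}\in\Lambda^\perp$ as $\sum_i c_i\mathbf{q}_i$ with $c_i\in\Z$; pairing with the $j$-th column of $B$ gives $c_j=0$ for $j\le r$.

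There is one slip in your sketch of the ``main obstacle''. The identity $\mathbf{x}\lrcorner\ast\omega=\pm\ast(\mathbf{x}\wedge\omega)$ makes $\mathbf{x}\lrcorner\ast\omega$ vanish for $\mathbf{x}\in V$, not for $\mathbf{x}\in V^\perp$; indeed $\mathbf{x}\wedge\omega\neq 0$ for every nonzero $\mathbf{x}\in V^\perp$. For $\mathbf{x}\in V^\perp$ the relevant identity is $\mathbf{x}\wedge\ast\omega=\pm\ast(\mathbf{x}\lrcorner\omega)=0$. Either correct version, together with a dimension count, shows that $\ast\omega$ is decomposable and represents $V^\perp$. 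The quickest check, however, is equivariance. Take a positively oriented orthonormal basis $\mathbf{u}_1,\ldots,\mathbf{u}_n$ of $\R^n$ with $\mathbf{u}_1,\ldots,\mathbf{u}_r$ spanning $V$. Then $\omega=c\,\mathbf{u}_1\wedge\cdots\wedge\mathbf{u}_r$ for some $c\neq 0$, and since $\ast$ commutes with $\SO_n(\R)$ you get $\ast\omega=c\,\mathbf{u}_{r+1}\wedge\cdots\wedge\mathbf{u}_n$.
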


Next, for $\Lambda \in \mathcal{P}_{n,r}$, we define 
     \begin{align}
    \alpha_{n,r}(T,\Lambda) &= 
        \# 
    \{
    (\a_1, \ldots, \a_n) \in (\Lambda \cap [-T,T]^n)^n: \rank (\langle \a_1, \ldots, \a_n\rangle_{\Z}) = r
    \},
    \label{eq:defalpha}
    \\ \beta_{n,r}(T,\Lambda)
    &=
    \# \{ 
    (\b_1, \ldots, \b_n) \in (\Lambda \cap [-T,T]^n)^n
    \},
    \label{eq:defbeta}
    \end{align} and
    \begin{align} \label{eq:defgamma}
    \gamma_{n,r}(T,\Lambda) =  \# 
    \{
    (\a_1, \ldots, \a_n) \in (\Lambda \cap [-T,T]^n)^n: \rank (\langle \a_1, \ldots, \a_n\rangle_{\Z}) < r
    \}.
    \end{align}
    With $\mathcal{A}_{\Lambda}$ and $\mathcal{B}_{\Lambda^\perp}$ defined as in \eqref{eq:partitionlatticesAB}, we remark that $\alpha_{n,r}(T,\Lambda) = \# (\mathcal{A}_{\Lambda}\cap [-T,T]^{n^2})$ and $\beta_{n,n-r}(T,\Lambda^\perp) = \# (\mathcal{B}_{\Lambda^\perp} \cap [-T,T]^{n^2})$, where $r = \rank (\Lambda)$. 
       \begin{lem}\label{lem:condition_alphanonzero_detsize}
        Let $n \ge 2$, $r \in \{1, \ldots, n-1\}$ and $\Lambda \in \mathcal{P}_{n,r}$. 
        Then there exist constants $q_n,Q_n>0$ such that the following hold:
        \begin{enumerate}
            \item if $\alpha_{n,r}(T, \Lambda) \neq 0$, then we have $\det(\Lambda) \le Q_n T^{r}$;
            \item if 
            $\alpha_{n,r}(T, \Lambda) = 0$, then we have $\det(\Lambda) \ge q_nT$. 
        \end{enumerate}
    \end{lem}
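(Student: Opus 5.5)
Both parts reduce to comparing $\det(\Lambda)$ with products of successive minima via Minkowski's second theorem \eqref{eq:minkowsi2thm}, together with the observation that the sup-norm and Euclidean norm of a vector in $\R^n$ differ by a factor of at most $\sqrt{n}$.

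For part (1), suppose $\alpha_{n,r}(T,\Lambda)\neq 0$. Then there are $\a_1,\ldots,\a_n\in\Lambda\cap[-T,T]^n$ spanning a sublattice of rank $r$. Among them I pick $r$ linearly independent vectors. Each has Euclidean norm at most $\sqrt{n}\,T$, so by the definition of successive minima $\lambda_k\le \sqrt n\, T$ for every $k\le r$. Minkowski's second theorem then gives
\[
\det(\Lambda)\ll \lambda_1\cdots\lambda_r\le (\sqrt n\,T)^r,
\]
which is the claim with a suitable $Q_n$.

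For part (2), I argue by contraposition: if $\det(\Lambda)< q_nT$ for a small constant $q_n$, I want to produce $r$ independent vectors of $\Lambda$ in $[-T,T]^n$. Padding them with zero vectors then gives an $n$-tuple of rank $r$, so $\alpha_{n,r}(T,\Lambda)\neq0$. The key input is that $\Lambda\subset\Z^n$ is a nonzero integer lattice, so every nonzero vector has Euclidean norm at least $1$, and hence $\lambda_1\ge 1$. Consequently $\lambda_i\ge1$ for every $i$, and
\[
\lambda_r\le \frac{\lambda_1\cdots\lambda_r}{\lambda_1\cdots\lambda_{r-1}}\le \lambda_1\cdots\lambda_r\ll \det(\Lambda).
\]
Thus $\lambda_r\le C_n\det(\Lambda)$. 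If $\det(\Lambda)<T/C_n$, then $\lambda_r<T$, so there exist $r$ independent vectors of $\Lambda$ of Euclidean norm, and hence sup-norm, at most $T$. Taking $q_n=1/C_n$ finishes the proof.

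There is no real obstacle here. The only point needing care is that Minkowski's theorem \eqref{eq:minkowsi2thm} is stated for Euclidean minima with constants depending only on $n$ (uniformly in $r\le n$), while $\alpha_{n,r}$ uses boxes; the $\sqrt n$ comparison between the two norms absorbs this into $q_n$ and $Q_n$.
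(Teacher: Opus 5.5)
Your proof is correct and follows the paper's argument. Part (1) bounds the successive minima by $\sqrt{n}\,T$ and applies Minkowski's second theorem. Part (2) is the contrapositive of the paper's chain $\alpha_{n,r}(T,\Lambda)=0 \Rightarrow \lambda_r\ge T \Rightarrow \det(\Lambda)\gg T$, and you make explicit the input $\lambda_i\ge 1$ for integer lattices, which the paper uses tacitly in that last step.
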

    \begin{proof}
       If $\alpha_{n,r}(T, \Lambda) \neq 0$, then $\Lambda$ contains $r$ linearly independent vectors $\a_{i_1}, \ldots, \a_{i_r}$ with $|\a_{i_1}|\le \cdots \le |\a_{i_r}|\le T$. This means that the successive minima $\lambda_1, \ldots, \lambda_r$ of $\Lambda$  satisfy $\lambda_k \ll |\a_{i_k}| \le T$, and thus, by \eqref{eq:minkowsi2thm} we have 
    $
    \det (\Lambda) \ll T^r. 
    $
    
    Conversely, if $\alpha_{n,r}(T,\Lambda) = 0$, then $\Lambda \cap [-T,T]^n$ does not contain $r$ linearly independent vectors.
    This implies that $\lambda_r \ge T$,
    and hence \eqref{eq:minkowsi2thm} yields $\det(\Lambda) \gg T$. 
    \end{proof}

    Since there are only finitely many lattices of bounded determinant in $\mathcal{P}_{n,r}$, Lemma~\ref{lem:condition_alphanonzero_detsize} shows that there are only finitely many $\Lambda$ with 
    $\alpha_{n,r}(T, \Lambda) \neq 0$. In fact, Schmidt \cite[Theorem~1]{Schm} proved the following asymptotic formula. 
    \begin{lem}
    \label{lem:nooflatticesofboundedheight}
        Let $n \ge 2$ and $ r \in \{ 1, \ldots, n-1 \}$. Then
        \[
        \# \{ \Lambda \in \mathcal{P}_{n,r}: \det(\Lambda) \le H \}
        \sim a_{n,r} H^n,
        \]
 as $H \to \infty$, 
        where $a_{n,r}$ is a positive constant.
    \end{lem}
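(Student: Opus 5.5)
The plan is to follow Schmidt's classical strategy. First count \emph{all} rank-$r$ sublattices of $\Z^n$ of determinant at most $H$ by a lattice point count in $\R^{nr}$. Then pass to primitive lattices by M\"obius inversion over the index. By Lemma~\ref{lem:detlatticeorthogonal}, the map $\Lambda\mapsto\Lambda^\perp$ is a determinant-preserving bijection $\mathcal{P}_{n,r}\to\mathcal{P}_{n,n-r}$. So we may assume $r\le n/2$ if convenient, though the argument below does not need this.

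\textbf{Step 1 (all sublattices).} Every $\Lambda\in\mathcal{L}_{n,r}$ has a Minkowski-reduced basis $(\x_1,\dots,\x_r)$, unique up to finitely many choices, with multiplicity bounded in terms of $r$ and generically equal to a fixed constant $w_r$ coming from sign changes. Let $\mathcal{R}\subset\R^{nr}$ be the set of $r$-tuples that are Minkowski-reduced bases of their span. This is a cone cut out by finitely many homogeneous quadratic inequalities. The condition $\det\langle \x_1,\dots,\x_r\rangle\le H$, where $\det$ is the square root of the Gram determinant, is homogeneous of degree $r$. Hence
\[
\mathcal{R}(H)=\{\x\in\mathcal{R}:\det\le H\}=H^{1/r}\mathcal{R}(1),
\]
which has volume $\vol(\mathcal{R}(1))H^{n}$. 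I expect $\#(\mathcal{L}_{n,r}\text{ with }\det\le H)\sim w_r^{-1}\vol(\mathcal{R}(1))H^n$, with finiteness of $\vol(\mathcal{R}(1))$ checked in the same way as the tail estimate below.

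\textbf{Step 2 (the main obstacle).} The region $\mathcal{R}(1)$ is unbounded: $\x_1$ may be very short while $\x_r$ is long. So a naive volume heuristic, or Davenport's lemma applied directly, is not sufficient. I would handle this by dyadic decomposition. For reduced bases we have $|\x_1|\le\dots\le|\x_r|$ and $|\x_1|\cdots|\x_r|\asymp\det\le H$. For integer points we also have $|\x_1|\ge1$, so every $|\x_i|\ll H$. I would split according to dyadic sizes $|\x_i|\sim X_i$ and apply Davenport's lemma to each piece. The error terms are volumes of projections, bounded by products of at most $nr-1$ of the side lengths $X_i$, where each $\x_i$ contributes up to $X_i^n$. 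Summing over dyadic boxes with $\prod X_i\ll H$ and $X_1\ge1$, the pieces with $X_1\ge H^{\delta}$ are genuinely bounded, with error $O(H^{n-\delta'})$. The pieces with $X_1<H^{\delta}$ I would bound trivially by $\ll X_1^{n}(H/X_1)^{n}\cdot X_1^{-?}$. More precisely, I would count $\x_1$ and then the lattice of the remaining vectors, and I expect a saving of a power of $X_1$ from $\det\ge|\x_1|\cdot(\text{projected det})$. The same estimate with integer points replaced by volume shows that the contribution of the cusp to $\vol(\mathcal{R}(1))$ is finite. Getting this tail bound cleanly is the step I expect to need the most care. If it resists, I would instead count via an orbit argument: $\mathcal{P}_{n,r}$ is the $\SL_n(\Z)$-orbit of $\Z^r\times\{0\}$, and one can count primitive Pl\"ucker vectors of norm at most $H$ on the cone over the Grassmannian.

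\textbf{Step 3 (primitive lattices).} Each $\Lambda\in\mathcal{L}_{n,r}$ lies in a unique primitive lattice $\Lambda'=\langle\Lambda\rangle_\Q\cap\Z^n$, with $\det\Lambda=[\Lambda':\Lambda]\det\Lambda'$. The number of sublattices of index $d$ in a rank-$r$ lattice has Dirichlet series $\zeta(s)\zeta(s-1)\cdots\zeta(s-r+1)$. Writing $P(H)$ for the primitive count and $L(H)$ for the full count, we get
\[
L(H)=\sum_d s_r(d)\,P(H/d).
\]
This inverts to the relation $P(H)\sim a_{n,r}H^n$ with
\[
a_{n,r}=w_r^{-1}\vol(\mathcal{R}(1))\big/\big(\zeta(n)\zeta(n-1)\cdots\zeta(n-r+1)\big).
\]
The inversion is justified because $n-r+1\ge2$, so the series converges absolutely at $s=n$. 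This gives the lemma with $a_{n,r}>0$.
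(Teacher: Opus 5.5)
The paper gives no argument for this lemma. It is quoted from Schmidt \cite[Theorem~1]{Schm}, and later only the upper bound $\#\{\Lambda\in\mathcal{P}_{n,r}:\det(\Lambda)\le H\}\ll H^n$ is used (in Lemma~\ref{lem:inverse_determinant_bound} and in the truncation step of Lemma~\ref{lem:beta_beta_sum_asymp}). Your outline is a reasonable reconstruction along classical lines, and several parts are sound:
the homogeneity computation in Step~1;
the pieces with $X_1\ge H^\delta$, where each Davenport error is $\ll (X_1\cdots X_r)^n/X_1\ll H^{n-\delta}$ over $O((\log H)^r)$ pieces;
the inversion in Step~3, once Step~1 is known.

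The gap is the cusp bound, which carries the real content, and the mechanism you propose for it cannot work. Your ``trivial'' bound $X_1^n(H/X_1)^n$ equals $H^n$ for every $X_1$. A further saving that is only a power of $X_1$ therefore gives nothing for $X_1\asymp 1$. Yet lattices containing a very short vector are exactly the ones that must be shown negligible. As written, Step~2 does not give $o(H^n)$ for the cusp, so Step~1, and with it the lemma, is not established.

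What is missing is to use reducedness against the earlier basis vectors. For $v\in\Lambda_{i-1}=\langle\x_1,\dots,\x_{i-1}\rangle_\Z$, the vector $\x_i-v$ is admissible in Minkowski's condition, so $|\x_i|\le|\x_i-v|$. Equivalently, the orthogonal projection of $\x_i$ onto $\langle\x_1,\dots,\x_{i-1}\rangle_\R$ lies in the Voronoi cell of $\Lambda_{i-1}$. Hence $\x_i$ is determined, up to $O(1)$ choices, by its class in $\Z^n/\Lambda_{i-1}$. Count these classes through the projection of $\Z^n$ onto the orthogonal complement: this is a lattice of covolume $1/\det(\langle\Lambda_{i-1}\rangle_\Q\cap\Z^n)$, all of whose successive minima are at most $1$. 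This gives $\ll\det(\Lambda_{i-1})X_i^{n-i+1}\ll X_1\cdots X_{i-1}X_i^{n-i+1}$ choices for $\x_i$. Multiplying over $i$, a dyadic piece contains
\[
\ll (X_1\cdots X_r)^n\prod_{i<j}\frac{X_i}{X_j}
\]
integer points.

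For $r\ge2$ (there is no cusp when $r=1$), use $\prod_{i<j}X_i/X_j\le X_1/X_r$ and $X_r^{r-1}\ge X_2\cdots X_r$. This gives $\ll H^{n-1/(r-1)}X_1^{r/(r-1)}$ per piece, which is $\ll H^{n-(1-r\delta)/(r-1)}$ when $X_1\le H^{\delta}$. So the cusp is $O(H^{n-\eta})$ once $\delta<1/r$. The saving is a power of $H$, forced by $X_r$ being large, not a power of $X_1$.

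For fixed $X_1\cdots X_r$, the sum of $\prod_{i<j}X_i/X_j$ over dyadic shapes is a convergent geometric series. So the same estimate gives $L(H)\ll H^n$, which is already all the paper needs, and its continuous analogue gives $\vol(\mathcal{R}(1))<\infty$. You should also dispose of lattices with more than $w_r=2^r$ reduced bases (reduced bases on $\partial\mathcal{R}$); this is routine given these bounds.
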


    For an $r$-dimensional subspace $V \subset \R^n$,
    we denote by $\mu_V$
    the Haar measure on $V$ that is induced by the Euclidean structure on $\R^n$. This means that the measure is normalised in such a way that 
    \begin{align}\label{eq:unitballvolume}
         \mu_V(\{v \in V: \|v \|_2 \le 1 \}) = B_r,
    \end{align}
    where $B_r = \pi^{r/2}/\Gamma(\tfrac{r}{2}+1)$ is the usual volume of the $r$-dimensional unit ball and $\|v\|_2$ denotes the Euclidean norm on $\R^n$.

 The following lemma contains the lattice point counting estimates we require and appears in many variants in the literature, such as in work of Davenport \cite{davenport_on_a_principle},  Schmidt \cite[Lemma~2]{Schm} and   Katznelson  \cite[Lemma~2]{katznelson}. Our formulation follows from work of Widmer \cite[Theorem~5.4]{widmer}. 
 
    \begin{lem}\label{lem:generalboundlatticesuccessiveminima}
          Let $n \ge 2$, let $r \in \{ 1, \ldots, n-1\}$, and let $\Lambda \in \mathcal{L}_{n,r}$ be a lattice with successive minima $\lambda_1, \ldots, \lambda_r$. 
          Then 
              \begin{equation*}
     \left|
            \# (\Lambda \cap [-T,T]^n) - \frac{\nu_\Lambda}{\det(\Lambda)} T^{r}
            \right| \ll  1+ \sum_{i=1}^{r-1} \frac{T^{i}}{\lambda_1 \cdots \lambda_{i}},
              \end{equation*}
           as $T \to \infty$, 
          where
                  the implicit constant depends only on $n$  and
\begin{equation}\label{eq:villach}
\nu_\Lambda = \mu_{  \langle \Lambda\rangle_\R}(
        \langle \Lambda\rangle_\R \cap [-1,1]^n).
\end{equation}
       \end{lem}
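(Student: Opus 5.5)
Our plan is to reduce to counting lattice points of a rank $r$ lattice in the convex body $\langle\Lambda\rangle_\R\cap[-T,T]^n$, using a reduced basis adapted to the successive minima. First pick linearly independent $\mathbf{w}_1,\dots,\mathbf{w}_r\in\Lambda$ with $\|\mathbf{w}_i\|_2=\lambda_i$. By the standard reduction (Mahler/Weyl), we then pass to a basis $\mathbf{b}_1,\dots,\mathbf{b}_r$ of $\Lambda$ with $\|\mathbf{b}_i\|\ll\lambda_i$. Such a basis is almost orthogonal:
\[
\|\textstyle\sum x_i\mathbf{b}_i\|\gg\max_i |x_i|\lambda_i .
\]
This follows because $\det(\Lambda)\asymp\prod\lambda_i\asymp\prod\|\mathbf{b}_i\|$ by \eqref{eq:minkowsi2thm}, and it forces the angle between each $\mathbf{b}_i$ and the span of the others to be bounded below.

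Write $V=\langle\Lambda\rangle_\R$ and $S_T=V\cap[-T,T]^n=T\cdot S_1$, a compact convex symmetric body in $V$ with $\mu_V(S_T)=\nu_\Lambda T^r$. Identify $V$ with $\R^r$ via the basis $\mathbf{b}_i$; under this map $\Lambda$ becomes $\Z^r$ and Lebesgue measure scales by $\det(\Lambda)^{-1}$. The main term is then $\mu_V(S_T)/\det\Lambda=\nu_\Lambda T^r/\det(\Lambda)$. For the error we use the classical Davenport principle (or directly the Lipschitz principle): for a compact set in $\R^r$ whose intersections with lines parallel to the axes consist of boundedly many intervals, and similarly for all coordinate projections, the lattice count differs from the volume by at most the sum of volumes of its projections onto coordinate subspaces of dimension $<r$. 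The body in coordinates is $\{\mathbf{x}\in\R^r:|\sum x_i\mathbf{b}_i|\le T\}$, which is convex, so Davenport's hypotheses hold with bounded constants depending only on $n$.

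The step needing care is bounding those projection volumes. By almost orthogonality, the body is contained in the box $|x_i|\ll T/\lambda_i$. The projection onto coordinates $I$ with $|I|=i$ therefore has volume $\ll T^{i}/\prod_{j\in I}\lambda_j$. Since the $\lambda_j$ are increasing, this is $\le T^i/(\lambda_1\cdots\lambda_i)$, and the $i=0$ projection contributes $1$. Summing over $i\le r-1$ gives exactly the claimed error $1+\sum_{i=1}^{r-1}T^i/(\lambda_1\cdots\lambda_i)$, with constants depending only on $n$ because every inequality used is uniform.

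The main obstacle is the uniform almost-orthogonality of the reduced basis, with constants depending only on $r\le n$. We would cite it in the form in Widmer \cite[Theorem~5.4]{widmer}, or Schmidt \cite[Lemma~2]{Schm}, rather than reprove it. With that in hand, the remaining steps are routine.
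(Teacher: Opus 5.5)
Your proof is correct, but it takes a different route from the paper. The paper gives no argument of its own: it simply invokes Widmer's Theorem~5.4 \cite{widmer}. That result is a Lipschitz-principle lattice point count for a full-rank lattice in $\R^D$ and any set whose boundary is Lipschitz-parametrisable. To apply it, one identifies $\langle\Lambda\rangle_\R$ isometrically with $\R^r$. One then checks that the boundary of the convex body $\langle\Lambda\rangle_\R\cap[-T,T]^n$, which lies in a ball of radius $\sqrt n\,T$, is covered by boundedly many Lipschitz maps with constants $\ll T$.

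You instead reprove the estimate in the classical Davenport--Schmidt way:
\begin{itemize}
\item take a basis with $\|\mathbf{b}_i\|\ll\lambda_i$ and establish almost orthogonality;
\item pull back to $\Z^r$;
\item apply Davenport's principle \cite{davenport_on_a_principle} with $h=1$ to the convex body $\{\mathbf{x}\in\R^r: |\sum x_i\mathbf{b}_i|\le T\}$;
\item bound the projection volumes using the box $|x_i|\ll T/\lambda_i$.
\end{itemize}
Your route is more elementary and makes the dependence on the successive minima transparent. It relies on convexity of the body, which is harmless here. Widmer's theorem covers far more general sets, which is why the paper can just quote it.

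One correction: Widmer's Theorem~5.4 is the counting statement itself, not an almost-orthogonality lemma. So citing it for that step is circular in spirit. It is also unnecessary, since what you call the main obstacle is a one-liner. Let $\mathbf{b}_i^\perp$ be the component of $\mathbf{b}_i$ orthogonal to the other basis vectors. By Hadamard's inequality and \eqref{eq:minkowsi2thm},
$\|\mathbf{b}_i^\perp\| = \det(\Lambda)/\det\langle \mathbf{b}_j : j\neq i\rangle_\Z \ge \det(\Lambda)/\prod_{j\neq i}\|\mathbf{b}_j\| \gg \lambda_i$.
Hence $\|\sum_j x_j\mathbf{b}_j\|_2\ge |x_i|\,\|\mathbf{b}_i^\perp\|\gg |x_i|\lambda_i$, uniformly with constants depending only on $n$. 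With that filled in, your argument is complete. It in fact gives the bound for all $T>0$, not just asymptotically.
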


We record the following immediate consequence.

       \begin{cor}\label{lem:asymp_for_beta}
       Let $n \ge 2$, let $r \in \{ 1, \ldots, n-1\}$, let $\Lambda \in \mathcal{L}_{n,r}$ be a lattice with successive minima $\lambda_1, \ldots, \lambda_r$ and let $\nu_{\Lambda}$ be given by \eqref{eq:villach}. 
          \begin{enumerate}
              \item We have
              \begin{align}\label{eq:latticeptsinsideregion}
          \left|
            \# (\Lambda \cap [-T,T]^n) - \frac{\nu_\Lambda}{\det(\Lambda)} T^{r}
            \right| \ll T^{r-1} ,
          \end{align}
          where
          the implicit constant depends on $n$ only.
          \item Let $w_n$ be a fixed constant.  If the successive minima of $\Lambda$ satisfy the bound $\lambda_1  \le \cdots \le \lambda_r \le w_nT$, we have  
        \begin{align}\label{eq:boundedlatticecount}
            \left|
            \# (\Lambda \cap [-T,T]^n) - \frac{\nu_\Lambda}{\det(\Lambda)} T^{r}
            \right| \ll \frac{T^{r-1} \lambda_r}{\det(\Lambda)} ,
        \end{align}
where the implicit constant  depends on $n$ and $w_n$ only.
          \end{enumerate}       
    \end{cor}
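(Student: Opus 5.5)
Both parts follow from Lemma~\ref{lem:generalboundlatticesuccessiveminima}, together with the ordering $\lambda_1\le\cdots\le\lambda_r$ and the fact that every nonzero vector of $\Lambda\subset\Z^n$ has Euclidean norm at least $1$. In particular $\lambda_1\ge 1$, and hence $\lambda_i\ge 1$ for every $i$.

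For part (1), I bound each term on the right-hand side of Lemma~\ref{lem:generalboundlatticesuccessiveminima} separately. Since $\lambda_1\cdots\lambda_i\ge 1$, we get
\[
\frac{T^i}{\lambda_1\cdots\lambda_i}\le T^i\le T^{r-1}
\]
for $1\le i\le r-1$, provided $T\ge 1$. The constant term $1$ is also at most $T^{r-1}$ when $T\ge1$. For $T<1$ (and $T\ge 0$) the left-hand side of \eqref{eq:latticeptsinsideregion} needs a direct check. Here $\Lambda\cap[-T,T]^n=\{\mathbf 0\}$, since nonzero integer vectors have sup-norm at least $1$. Also $\nu_\Lambda\ll 1$, because $\langle\Lambda\rangle_\R\cap[-1,1]^n$ lies in a Euclidean ball of radius $\sqrt n$ in an $r$-dimensional space. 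Finally $\det(\Lambda)\ge 1$, since it is the square root of a positive integer Gram determinant. So the difference is $\ll 1$. This gives $\ll T^{r-1}$ if $r\ge 2$, or if $r=1$ where $T^{0}=1$. If $r\ge2$ and $T$ is very small, the bound $1\ll T^{r-1}$ fails, so I would either read the statement as intended for $T\ge1$ (the setting "as $T\to\infty$" of Lemma~\ref{lem:generalboundlatticesuccessiveminima}) or simply note the restriction $T\ge 1$. I expect this small-$T$ issue to be the only real subtlety, and I would handle it by adopting the convention $T\ge1$.

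For part (2), I use \eqref{eq:minkowsi2thm} in the form $\lambda_1\cdots\lambda_r\ll\det(\Lambda)$. For $1\le i\le r-1$,
\[
\frac{T^i}{\lambda_1\cdots\lambda_i}=\frac{T^i\,\lambda_{i+1}\cdots\lambda_r}{\lambda_1\cdots\lambda_r}\ll \frac{T^i\,\lambda_{i+1}\cdots\lambda_{r-1}\,\lambda_r}{\det(\Lambda)}\le \frac{T^i (w_nT)^{r-1-i}\lambda_r}{\det(\Lambda)}\ll_{w_n}\frac{T^{r-1}\lambda_r}{\det(\Lambda)}.
\]
Here I used $\lambda_j\le w_nT$ for $i<j\le r-1$. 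The constant term is handled the same way:
\[
1=\frac{\lambda_1\cdots\lambda_r}{\lambda_1\cdots\lambda_r}\ll\frac{(w_nT)^{r-1}\lambda_r}{\det(\Lambda)}.
\]
Summing these bounds gives \eqref{eq:boundedlatticecount}. The implied constants depend only on $n$ and $w_n$. No lower bound on $T$ is needed here, since the hypothesis $\lambda_r\le w_nT$ together with $\lambda_1\ge1$ already forces $T\ge w_n^{-1}$.
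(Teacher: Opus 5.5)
Your proof is correct and is essentially the paper's argument: part (1) from $\lambda_i\ge 1$, and part (2) by writing $T^i/(\lambda_1\cdots\lambda_i)=T^i\lambda_{i+1}\cdots\lambda_r/(\lambda_1\cdots\lambda_r)$, applying Minkowski and bounding the middle minima by $w_nT$; your explicit treatment of the constant term $1$, and your caveat that part (1) needs $T\ge 1$ when $r\ge 2$, are small but genuine additions that the paper leaves implicit. One slip: the step $1/(\lambda_1\cdots\lambda_r)\ll 1/\det(\Lambda)$ uses the half $\det(\Lambda)\ll\lambda_1\cdots\lambda_r$ of \eqref{eq:minkowsi2thm}, not $\lambda_1\cdots\lambda_r\ll\det(\Lambda)$ as you state, though both halves are available there.
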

\begin{proof}
    The successive minima of any integer lattice  are the Euclidean norms of some integer vectors, and therefore we have $\lambda_i \ge 1$ for all $i =1, \ldots, r$. In combination with Lemma~\ref{lem:generalboundlatticesuccessiveminima}, this immediately implies  \eqref{eq:latticeptsinsideregion}.

    Let us now assume that $\lambda_1 \le \cdots \le \lambda_r \le w_nT$.
   By  \eqref{eq:minkowsi2thm}, we have 
   \[
   \frac{T^i}{\lambda_1 \cdots \lambda_i} = \frac{T^i\lambda_{i+1} \cdots \lambda_r}{\lambda_1 \cdots \lambda_r} \ll \frac{T^{r-1}\lambda_r}{\det(\Lambda)}.
   \]
The bound  \eqref{eq:boundedlatticecount}
now follows from
 Lemma~\ref{lem:generalboundlatticesuccessiveminima}.
\end{proof}

In the following proofs we use the fact that $\nu_{\Lambda}$ can be bounded by a constant  only depending on $n$. Indeed, 
   it follows from  \eqref{eq:unitballvolume} and \eqref{eq:villach} that 
   \[
   \nu_{\Lambda} = \mu_{  \langle \Lambda\rangle_\R} 
   (
   \{ 
   v \in  \langle \Lambda\rangle_\R : |v| \le 1 
   \}) \le 
   \mu_{  \langle \Lambda\rangle_\R} 
   (
   \{ 
   v \in  \langle \Lambda\rangle_\R : \|v\|_2 \le \sqrt{n} 
   \}) = n^{r/2}B_r \ll 1.
   \]
   We now prove the following result. 

\begin{lem}\label{lem:gammabound}
          Let $n \ge 2$, let $r \in \{ 1, \ldots, n-1\}$, let $q_n$ be a fixed constant and let 
     $\Lambda \in \mathcal{P}_{n,r}$.
Assume that the successive minima of $\Lambda$ satisfy  
$\lambda_1  \le \cdots \le \lambda_r \le q_nT$. Then 
        \[
        \gamma_{n,r}(T, \Lambda) \ll \frac{T^{nr-1} \lambda_r}{\det(\Lambda)^n},
        \]
        where $\gamma_{n,r}(T, \Lambda)$ is defined in \eqref{eq:defgamma}. 
        The implicit constant only depends on $n$ and $q_n$. 
    \end{lem}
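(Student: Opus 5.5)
If $(\a_1,\ldots,\a_n)$ has rank less than $r$, then all the $\a_i$ lie in a sublattice $\Lambda'\subset\Lambda$ of rank $r-1$. I will bound $\gamma_{n,r}(T,\Lambda)$ by summing over such sublattices, choosing $\Lambda'$ canonically. The vectors $\a_1,\ldots,\a_n$ span some subspace $W$ of dimension $s\le r-1$, so there is a set of indices $i_1<\cdots<i_s$ with $\a_{i_1},\ldots,\a_{i_s}$ linearly independent and every other $\a_j$ in $W\cap\Lambda$.

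The crude count proceeds as follows. First choose the $s$ independent vectors. Each lies in $\Lambda\cap[-T,T]^n$, which by \eqref{eq:boundedlatticecount} (applicable since $\lambda_r\le q_nT$) has size $\ll T^r/\det(\Lambda)$. Next, the remaining $n-s$ vectors must lie in $W\cap\Lambda\cap[-T,T]^n$. Because $W\cap\Lambda$ is a rank-$s$ sublattice of $\Lambda$ whose successive minima are at least $\lambda_1,\ldots,\lambda_s$, Lemma~\ref{lem:generalboundlatticesuccessiveminima} bounds its count by $\ll 1+\sum_{i\le s} T^i/(\lambda_1\cdots\lambda_i)\ll T^s/(\lambda_1\cdots\lambda_s)$, using $\lambda_i\ll T$. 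This gives, up to the combinatorial factor $\binom{n}{s}$,
\[
\gamma_{n,r}(T,\Lambda)\ll \sum_{s=0}^{r-1}\Big(\frac{T^r}{\det\Lambda}\Big)^{s}\Big(\frac{T^s}{\lambda_1\cdots\lambda_s}\Big)^{n-s}.
\]
It remains to compare each term with $T^{nr-1}\lambda_r/\det(\Lambda)^n$.

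Write $\det\Lambda\asymp\lambda_1\cdots\lambda_r$ via \eqref{eq:minkowsi2thm}. The ratio of the $s$-th term to the target is
\[
\frac{T^{rs+s(n-s)}\,(\lambda_1\cdots\lambda_r)^{n-s}}{T^{nr-1}\,\lambda_r\,(\lambda_1\cdots\lambda_s)^{n-s}}
=\frac{(\lambda_{s+1}\cdots\lambda_r)^{n-s}}{\lambda_r\,T^{(n-s)(r-s)-1}}.
\]
Using $\lambda_j\le q_nT$, the numerator is $\ll T^{(n-s)(r-s)-1}\lambda_r$, since one factor $\lambda_r$ is kept and the remaining $(n-s)(r-s)-1\ge0$ factors are each bounded by $q_nT$. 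Hence every ratio is $O(1)$, and summing over $s$ gives the lemma.

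The main point needing care is the bound for the points of the sublattice $W\cap\Lambda$. It is needed uniformly over all $W$, which requires the monotonicity claim that the successive minima of a sublattice are at least those of $\Lambda$. This holds because independent vectors of the sublattice are independent in $\Lambda$. One also has to make sure the "$1+$" term in Lemma~\ref{lem:generalboundlatticesuccessiveminima} is absorbed. This uses $\lambda_i\le q_nT$, which gives $T^s/(\lambda_1\cdots\lambda_s)\gg1$, and the case $s=0$ (all $\a_i=\mathbf 0$) is trivially covered.
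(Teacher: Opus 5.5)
Your proposal is correct and follows essentially the same route as the paper: you choose $s<r$ independent vectors and count them in $\Lambda\cap[-T,T]^n$, then count the remaining $n-s$ vectors in the rank-$s$ sublattice using $\lambda_i'\ge\lambda_i$, which gives the identical sum over $s$. The only cosmetic difference is at the end: you compare each term directly with $T^{nr-1}\lambda_r/\det(\Lambda)^n$, whereas the paper first observes that the terms increase with $s$ and then bounds only the $s=r-1$ term.
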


     \begin{proof}
        Let $\a_1, \ldots, \a_n \in \Lambda \cap [-T,T]^n$ with the property that \[
        \rank(\langle \a_1, \ldots, \a_n \rangle_{\Z}) = s < r,
        \] 
        for some $s\in \{0,\ldots,r-1\}$.
        Assume, say, that $\rank(\langle \a_1, \ldots, \a_s \rangle_{\Z}) = s$. 
         Combining Corollary~\ref{lem:asymp_for_beta} with the bound  $\nu_\Lambda \ll 1$ and the assumption $T/\lambda_r \gg 1$, we get
        \begin{align*}
            \# (\Lambda \cap [-T,T]^n) 
        \ll \frac{T^r}{\det (\Lambda)} +  \frac{T^{r-1}\lambda_r}{\det (\Lambda)}
        \ll  \frac{T^r}{\det (\Lambda)}.
        \end{align*} 
        Thus, there are 
        \[
        \ll  \left( \frac{T^r}{\det (\Lambda)}\right)^s
        \]
        choices for $\a_1,\ldots,\a_s$.
        
        We have 
        \[
        \a_{s+1}, \ldots, \a_n \in 
        \Lambda' := 
        \langle \a_1, \ldots, \a_s\rangle_{\Q} \cap \Lambda.
        \]
        Let $\lambda_1', \ldots, \lambda_s'$ denote the successive minima of $\Lambda'$. Since $\Lambda' \subset \Lambda$, we have $\lambda_i \le \lambda_i'$ for $i = 1,\ldots,s$.        
       Observing that, since $\a_1, \ldots, \a_s \in \Lambda'$, we have $\lambda_1' \le \cdots \le \lambda_s' \ll T$, and employing the bound $\lambda_1'\cdots\lambda_s' \ll \det(\Lambda') \ll \lambda_1'\cdots\lambda_s'$ from \eqref{eq:minkowsi2thm} and Corollary~\ref{lem:asymp_for_beta}, we conclude that
    \[
     \# (\Lambda ' \cap [-T,T]^n) 
     \ll \frac{T^s}{\det(\Lambda')}
         \ll  \frac{T^s}{\lambda_1'\cdots\lambda_s'} \ll \frac{T^s}{\lambda_1\cdots\lambda_s}.
    \] 
    This shows that there are 
    \[
    \ll \left(
    \frac{T^s}{\lambda_1\cdots\lambda_s}
    \right)^{n-s}
    \]
    choices for $\a_{s+1},\ldots,\a_n$.
    It therefore  follows that 
    \begin{align}\label{eq:gammasumbound}
    \gamma_{n,r}(T, \Lambda) 
    \ll
    \sum_{s=0}^{r-1} \left(\frac{T^r}{\det(\Lambda)}
    \right)^s 
    \left(\frac{T^s}{\lambda_1\cdots\lambda_s}
    \right)^{n-s}.
    \end{align}
Moreover, since  $\lambda_i \ll T$, it follows from \eqref{eq:minkowsi2thm} that 
    \[
    \frac{T^s}{\lambda_1 \cdots \lambda_s} \ll \frac{T^r}{\lambda_1 \cdots \lambda_r  } \ll 
    \frac{T^r}{\det(\Lambda)},
    \]
    and so the right hand side of \eqref{eq:gammasumbound} is bounded from above by the contribution from $s=r-1$. Thus
    \begin{align*}
        \gamma_{n,r}(T, \Lambda) \ll \frac{T^{r(r-1)+(r-1)(n-r+1)}}{\det(\Lambda)^{r-1}(\lambda_1 \cdots \lambda_{r-1})^{n-r+1}} 
        \ll 
        \frac{T^{rn+r-n-1}\lambda_r^{n-r+1}}{\det(\Lambda)^{n}} \ll \frac{T^{rn-1}\lambda_r}{\det(\Lambda)^{n}}.
    \end{align*}
    For the second bound, we employed  \eqref{eq:minkowsi2thm} to obtain $\lambda_1 \cdots \lambda_{r-1} \gg \det(\Lambda)/\lambda_r$; for the final bound, we used the fact that $\lambda_r\ll T$.    
    \end{proof}
    
\begin{lem}
\label{lem:inverse_determinant_bound}
Let $ n \ge 2$ and $r\in \{1,\ldots,n-1 \}$. 
For every integer $s \ge 0$, we have
\begin{equation*}
    \sum_{\substack{\Lambda \in \mathcal{P}_{n,r}, \\
    \alpha_{n,r}(T,\Lambda) \neq 0}}
    \frac{1}{\det(\Lambda)^s}
    \ll
    \begin{cases}
        T^{r(n-s)} & \text{if } s < n,\\
        \log T & \text{if } s = n,\\
        1 & \text{if } s > n,
    \end{cases}
\end{equation*}
 as $T \to \infty$. The implicit constant depends on $n$ only.
\end{lem}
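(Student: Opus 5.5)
By Lemma~\ref{lem:condition_alphanonzero_detsize}(1), every $\Lambda \in \mathcal{P}_{n,r}$ with $\alpha_{n,r}(T,\Lambda)\neq 0$ satisfies $\det(\Lambda)\le Q_nT^r$. So the plan is to bound the sum by
\[
\sum_{\substack{\Lambda \in \mathcal{P}_{n,r}\\ \det(\Lambda)\le Q_nT^r}} \det(\Lambda)^{-s},
\]
and to evaluate it by partial summation against the counting function
\[
N(H)=\#\{\Lambda\in\mathcal{P}_{n,r}:\det(\Lambda)\le H\}.
\]
By Lemma~\ref{lem:nooflatticesofboundedheight}, $N(H)\sim a_{n,r}H^n$. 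Since $N(H)=0$ for $H<1$ (integer lattices have determinant at least $1$) and $N$ is finite for every $H$, we get the uniform bound $N(H)\ll H^n$ for all $H\ge 1$, with the implied constant depending only on $n$ (there are finitely many $r$).

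A dyadic decomposition avoids any Stieltjes technicalities. Split the range $1\le \det(\Lambda)\le Q_nT^r$ into blocks $2^{k}\le \det(\Lambda)<2^{k+1}$ with $0\le k\le K$, where $K=\log_2(Q_nT^r)+O(1)$. The $k$-th block contributes at most
\[
N(2^{k+1})\,2^{-ks}\ll 2^{k(n-s)}.
\]
Summing over $0\le k\le K$ gives three cases.
\begin{itemize}
\item If $s<n$, the series is geometric and dominated by its last term, giving $\ll 2^{K(n-s)}\ll T^{r(n-s)}$.
\item If $s=n$, each block contributes $O(1)$, so the total is $\ll K\ll \log T$ for $T\ge 2$. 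Here $r$ and $Q_n$ depend only on $n$.
\item If $s>n$, the sum over all $k\ge 0$ converges, so the total is $\ll 1$.
\end{itemize}

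There is no real obstacle. The only point needing care is the upgrade from the asymptotic in Lemma~\ref{lem:nooflatticesofboundedheight} to the uniform bound $N(H)\ll H^n$ for all $H\ge 1$, which follows from the asymptotic together with the finiteness of $N(H)$ on any bounded range of $H$. One should also record that the $\log T$ bound is meant for $T\ge 2$, say, consistent with the statement being "as $T\to\infty$".
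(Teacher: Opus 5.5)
Your proposal is correct and follows the paper's proof essentially step for step. Like the paper, you first use Lemma~\ref{lem:condition_alphanonzero_detsize} to weaken the condition to $\det(\Lambda)\le Q_nT^r$, then decompose dyadically and bound each block by Schmidt's count from Lemma~\ref{lem:nooflatticesofboundedheight}, and finally read off the three cases from the resulting geometric sum. Your explicit remark that the asymptotic $N(H)\sim a_{n,r}H^n$, together with $\det(\Lambda)\ge 1$, yields the uniform bound $N(H)\ll H^n$ for all $H\ge 1$ is a small point the paper leaves implicit.
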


\begin{proof}
By Lemma~\ref{lem:condition_alphanonzero_detsize}, we may weaken the condition
$\alpha_{n,r}(T,\Lambda)\ne0$ to the condition
$\det(\Lambda)\le Q_nT^r$, obtaining
\begin{equation*}
    \sum_{\substack{\Lambda \in \mathcal{P}_{n,r}, \\
    \alpha_{n,r}(T,\Lambda) \neq 0}}
    \frac{1}{\det(\Lambda)^s}
    \le
    \sum_{\substack{\Lambda \in \mathcal{P}_{n,r}, \\
    \det(\Lambda) \le Q_n T^r}}
    \frac{1}{\det(\Lambda)^s}.
\end{equation*}
To bound the latter sum, we use a dyadic decomposition, splitting the
range of possible determinants $[1,Q_nT^r]$ into logarithmically many
intervals:
\begin{equation*}
    \sum_{\substack{\Lambda \in \mathcal{P}_{n,r}, \\
    \alpha_{n,r}(T,\Lambda) \neq 0}}
    \frac{1}{\det(\Lambda)^s}
    \le
    \sum_{k = 0}^{\lceil \log_2(Q_n T^r) \rceil}\!\!\!
    \sum_{\substack{\Lambda \in \mathcal{P}_{n,r}, \\
    2^k \le \det(\Lambda) < 2^{k + 1}}}\!\!\!
    \frac{1}{\det(\Lambda)^s}.
\end{equation*}
We now apply Lemma~\ref{lem:nooflatticesofboundedheight}. In particular,
the number of lattices satisfying $\det(\Lambda)\le 2^{k + 1}$ is
$\ll 2^{(k + 1)n}$. Hence we get
\begin{equation*}
      \sum_{k = 0}^{\lceil \log_2(Q_n T^r) \rceil}\!\!\!
    \sum_{\substack{\Lambda \in \mathcal{P}_{n,r}, \\
    2^k \le \det(\Lambda) < 2^{k + 1}}}\!\!\!
    \frac{1}{\det(\Lambda)^s}
    \ll
    \sum_{k = 0}^{\lceil \log_2(Q_n T^r) \rceil}
    \frac{2^{(k + 1)n}}{2^{ks}}.
\end{equation*}
The sum on the right is a geometric sum. It is dominated by its final
term when $s<n$, has logarithmically many terms of bounded size when
$s=n$, and is bounded by a convergent geometric series when $s>n$.
Therefore,
\begin{equation*}
   \sum_{k = 0}^{\lceil \log_2(Q_n T^r) \rceil}
    \frac{2^{(k + 1)n}}{2^{ks}}
    =
    2^{n}
    \sum_{k = 0}^{\lceil \log_2(Q_n T^r) \rceil}
    2^{k(n-s)}
    \ll
    \begin{cases}
        T^{r(n-s)} & \text{if } s < n,\\
        \log T & \text{if } s = n,\\
        1 & \text{if } s > n,
    \end{cases}
\end{equation*}
which concludes the proof.
\end{proof}

\begin{lem}\label{lem:gamma_beta_sum_bound}
    Let $n \ge 2$ and let $r \in \{1, \ldots, n-1\}$. We have
    \[
    \sum_{\substack{\Lambda \in \mathcal{P}_{n,r}, \\
    \alpha_{n,r}(T,\Lambda) \neq 0}}
    \gamma_{n,r}(T,\Lambda) \beta_{n,n-r}(T,\Lambda^\perp) \ll T^{n^2 - 1}.
    \]
\end{lem}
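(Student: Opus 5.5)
The plan is to split the sum according to the size of the largest successive minimum $\lambda_r$ of $\Lambda$. In the range $\lambda_r\le q_nT$ the saving comes from Lemma~\ref{lem:gammabound}. In the complementary range the saving comes from the fact that $\det(\Lambda)$ is forced to be $\gg T$, which makes the sum over lattices small.

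\emph{Bound for $\beta$.} Throughout we bound $\beta_{n,n-r}(T,\Lambda^\perp)=\#(\Lambda^\perp\cap[-T,T]^n)^n$ crudely by part (1) of Corollary~\ref{lem:asymp_for_beta}, combined with Lemma~\ref{lem:detlatticeorthogonal} and $\nu_{\Lambda^\perp}\ll 1$. This gives
\[
\beta_{n,n-r}(T,\Lambda^\perp)\ll \frac{T^{n(n-r)}}{\det(\Lambda)^n}+T^{n(n-r-1)}.
\]
We only ever need to sum $\det(\Lambda)^{-s}$ over $\Lambda\in\mathcal{P}_{n,r}$ with $\det(\Lambda)\le Q_nT^r$, so Lemma~\ref{lem:inverse_determinant_bound} applies. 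In one place we also need its dyadic proof with the extra restriction $\det(\Lambda)\ge cT$; this gives $\ll T^{n-s}$ when $s>n$.

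\emph{Case A: $\lambda_r\le q_nT$.} Lemma~\ref{lem:gammabound} gives $\gamma_{n,r}\ll T^{nr-1}\lambda_r/\det(\Lambda)^n$. Since the minima of an integer lattice are all $\ge 1$, \eqref{eq:minkowsi2thm} gives $\lambda_r\ll\det(\Lambda)$, and hence $\gamma_{n,r}\ll T^{nr-1}/\det(\Lambda)^{n-1}$. Multiplying by the bound for $\beta$ leaves two terms:
\begin{itemize}
\item $T^{n^2-1}\det(\Lambda)^{-(2n-1)}$, whose sum over $\Lambda$ is $\ll T^{n^2-1}$ because $2n-1>n$;
\item $T^{n^2-n-1}\det(\Lambda)^{-(n-1)}$, whose sum is $\ll T^{n^2-n-1}\cdot T^{r}\le T^{n^2-2}$ by the case $s<n$ of Lemma~\ref{lem:inverse_determinant_bound}.
\end{itemize}

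\emph{Case B: $\lambda_r>q_nT$.} Since $\alpha_{n,r}(T,\Lambda)\ne0$, we also have $\lambda_r\ll T$, so $\lambda_r\asymp T$ and, by \eqref{eq:minkowsi2thm}, $\det(\Lambda)\gg\lambda_r\gg T$. For $\gamma$ we use only $\gamma_{n,r}\le\#(\Lambda\cap[-T,T]^n)^n$. We bound the latter by Lemma~\ref{lem:generalboundlatticesuccessiveminima}. Each term $T^i/(\lambda_1\cdots\lambda_i)$ there is $\ll T^{r-1}\lambda_r/\det(\Lambda)\ll T^r/\det(\Lambda)$. The leading term is of the same order, and the constant term $1$ is also $\ll T^r/\det(\Lambda)$ because $\det(\Lambda)\ll T^r$. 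Hence $\gamma_{n,r}\ll T^{rn}/\det(\Lambda)^n$. Multiplying by the bound for $\beta$ gives
\[
\gamma_{n,r}\,\beta_{n,n-r}\ll \frac{T^{n^2}}{\det(\Lambda)^{2n}}+\frac{T^{n^2-n}}{\det(\Lambda)^{n}}.
\]
For the first term we sum over $\det(\Lambda)\gg T$ using the tail version of the dyadic argument (Lemma~\ref{lem:nooflatticesofboundedheight}). This gives $\ll T^{n^2}\cdot T^{n-2n}=T^{n^2-n}$. For the second term, Lemma~\ref{lem:inverse_determinant_bound} with $s=n$ gives $\ll T^{n^2-n}\log T$. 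Both are $\ll T^{n^2-1}$ since $n\ge2$.

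\emph{Expected main obstacle.} The delicate step is Case B. There Lemma~\ref{lem:gammabound} is unavailable, and the saving must come entirely from the lower bound $\det(\Lambda)\gg T$ together with the tail sum over lattices. One must check that the lower-order terms in Lemma~\ref{lem:generalboundlatticesuccessiveminima} are all dominated by $T^r/\det(\Lambda)$ in this regime, which uses $\lambda_r\asymp T$. If this $\gamma$ estimate turns out to be too lossy, the fallback is a direct count of rank-deficient tuples. Such tuples have all vectors lying in the span of at most $r-1$ short basis vectors, which gives $\gamma\ll (T^{r-1}/(\lambda_1\cdots\lambda_{r-1}))^n\ll (T^{r-1}\lambda_r/\det(\Lambda))^n$, and this leads to the same conclusion.
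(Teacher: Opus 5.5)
Your proof is correct, and your Case A is exactly the paper's argument: Lemma~\ref{lem:gammabound} combined with $\lambda_r\ll\det(\Lambda)$, followed by Lemma~\ref{lem:inverse_determinant_bound} with $s=2n-1$ and $s=n-1$. Case B is valid but unnecessary, because the constant $q_n$ in Lemma~\ref{lem:gammabound} may be any fixed constant, and $\alpha_{n,r}(T,\Lambda)\neq 0$ already forces $\lambda_r\le\sqrt{n}\,T$; taking $q_n=\sqrt{n}$ there therefore covers every lattice in the sum at once, which is what the paper does implicitly.
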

\begin{proof}
By the definition of $\beta_{n,n-r}(T, \Lambda^\perp)$ in \eqref{eq:defbeta}, we have
\begin{equation*}
    \beta_{n,n-r}(T, \Lambda^\perp)  = \left(\#(\Lambda^\perp \cap [-T,T]^n)\right)^n.
\end{equation*}
Using the estimate for $\#(\Lambda^\perp \cap [-T,T]^n)$ from Corollary~\ref{lem:asymp_for_beta}, we obtain
\begin{equation*}
    \beta_{n,n-r}(T, \Lambda^\perp) = \left(\frac{\nu_{\Lambda^\perp}}{\det(\Lambda^\perp)}T^{n-r} + O(T^{n-r - 1})\right)^n \ll \frac{\nu_{\Lambda^\perp}^n}{\det(\Lambda^\perp)^n}T^{n(n-r)} + T^{n^2 -n(r+1)}.
\end{equation*}
Combining this with the upper bound for $\gamma_{n,r}(T,\Lambda)$ from Lemma~\ref{lem:gammabound}, we obtain
\begin{equation*}
     \sum_{\substack{\Lambda \in \mathcal{P}_{n,r}, \\
        \alpha_{n,r}(T,\Lambda) \neq 0}}
        \gamma_{n,r}(T,\Lambda) \beta_{n,n-r}(T,\Lambda^\perp) \ll \sum_{\substack{\Lambda \in \mathcal{P}_{n,r}, \\
        \alpha_{n,r}(T,\Lambda) \neq 0}} \frac{T^{nr - 1}\lambda_r}{\det(\Lambda)^{n}} \left(\frac{\nu_{\Lambda^{\perp}}^nT^{n(n - r)}}{\det(\Lambda^{\perp})^n} + T^{n^2 - n(r + 1)}\right).
\end{equation*}
Clearly  \eqref{eq:minkowsi2thm} implies that $\lambda_r \ll \det (\Lambda)$ and 
 Lemma~\ref{lem:detlatticeorthogonal} yields $\det (\Lambda) = \det (\Lambda^{\perp})$.  Substituting these bounds gives
\begin{equation*}
     \sum_{\substack{\Lambda \in \mathcal{P}_{n,r}, \\
        \alpha_{n,r}(T,\Lambda) \neq 0}}
        \gamma_{n,r}(T,\Lambda) \beta_{n,n-r}(T,\Lambda^\perp) \ll \sum_{\substack{\Lambda \in \mathcal{P}_{n,r}, \\
        \alpha_{n,r}(T,\Lambda) \neq 0}} \frac{T^{nr - 1}}{\det(\Lambda)^{n - 1}} \left(\frac{T^{n(n - r)}}{\det(\Lambda)^n} + T^{n^2 - n(r + 1)}\right).
\end{equation*}
Finally, we apply Lemma~\ref{lem:inverse_determinant_bound} with exponents $s=2n-1>n$ and $s=n-1<n$, respectively. This gives 
\begin{equation*} 
\sum_{\substack{\Lambda \in \mathcal{P}_{n,r}, \\ \alpha_{n,r}(T,\Lambda) \neq 0}} \gamma_{n,r}(T,\Lambda) \beta_{n,n-r}(T,\Lambda^\perp) \ll T^{n^2-1} +T^{n^2-n-1}T^r \ll T^{n^2-1},
\end{equation*} 
where the final inequality follows from $r\leq n-1$. 
\end{proof}

\begin{lem}\label{lem:beta_beta_sum_asymp}
    Let $n \ge 2$ and let $r \in \{1,\ldots,n-1\}$. Then
    \[
    \sum_{\substack{\Lambda\in\mathcal{P}_{n,r}\\
    \alpha_{n,r}(T,\Lambda)\neq0}}
    \beta_{n,r}(T,\Lambda)
    \beta_{n,n-r}(T,\Lambda^{\perp})
    =
    c_{O_n,r}T^{n^2}+O(T^{n^2-1}),
    \]
    where
    \[
    c_{O_n,r}
    =
    \sum_{\substack{\Lambda\in\mathcal{P}_{n,r}}}
    \frac{\nu_{\Lambda}^n\nu_{\Lambda^\perp}^n}
    {\det(\Lambda)^{2n}}
    \]
    is a positive constant. The implicit constant depends on $n$ only.
\end{lem}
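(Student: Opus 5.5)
We expand each factor using Corollary~\ref{lem:asymp_for_beta} and sum over $\Lambda$ with Lemma~\ref{lem:inverse_determinant_bound}. Write $\beta_{n,r}(T,\Lambda)=\left(\#(\Lambda\cap[-T,T]^n)\right)^n$ and
$$
\#(\Lambda\cap[-T,T]^n)=\frac{\nu_\Lambda}{\det\Lambda}T^r+E_\Lambda(T).
$$
Lattices with $\alpha_{n,r}(T,\Lambda)\neq 0$ have $\lambda_r\ll T$, since the rows span $\Lambda$ over $\Q$. For these, part (2) of Corollary~\ref{lem:asymp_for_beta} gives $E_\Lambda\ll T^{r-1}\lambda_r/\det\Lambda$. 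Since $\lambda_r\ll T$, the main term dominates, and expanding the $n$-th power yields
$$
\beta_{n,r}(T,\Lambda)=\frac{\nu_\Lambda^n}{\det(\Lambda)^n}T^{nr}+O\left(\frac{T^{nr-1}\lambda_r}{\det(\Lambda)^n}\right).
$$
For $\Lambda^\perp$ we do not control the successive minima, so we use part (1). Together with $\det\Lambda^\perp=\det\Lambda$ (Lemma~\ref{lem:detlatticeorthogonal}), this gives
$$
\beta_{n,n-r}(T,\Lambda^\perp)\ll\frac{T^{n(n-r)}}{\det(\Lambda)^n}+T^{n(n-r)-n},
$$
and
$$
\beta_{n,n-r}(T,\Lambda^\perp)=\frac{\nu_{\Lambda^\perp}^n}{\det(\Lambda)^n}T^{n(n-r)}+O\left(\sum_{k=1}^{n}\frac{T^{n(n-r)-k}}{\det(\Lambda)^{n-k}}\right).
$$

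Multiplying out, the main term is $T^{n^2}\nu_\Lambda^n\nu_{\Lambda^\perp}^n/\det(\Lambda)^{2n}$, and there are two error families. The first is the product of the $\beta_{n,r}$ error with the bound for $\beta_{n,n-r}$, after using $\lambda_r\ll\det\Lambda$. This is exactly the sum already treated in Lemma~\ref{lem:gamma_beta_sum_bound}, and it is $O(T^{n^2-1})$. The second is $T^{nr}\det(\Lambda)^{-n}$ times the $\Lambda^\perp$-error, which gives terms
$$
T^{n^2-k}\det(\Lambda)^{-(2n-k)},\qquad 1\le k\le n.
$$
We sum these with Lemma~\ref{lem:inverse_determinant_bound} at $s=2n-k$. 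For $k<n$ we have $s>n$, so the sum is $O(T^{n^2-k})\subseteq O(T^{n^2-1})$. For $k=n$ we get $s=n$, giving $T^{n^2-n}\log T$, which is also $O(T^{n^2-1})$ since $n\ge2$.

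It remains to complete the main term. Because $\nu$ is bounded, the tail satisfies
$$
\sum_{\alpha_{n,r}(T,\Lambda)=0}\frac{\nu_\Lambda^n\nu_{\Lambda^\perp}^n}{\det(\Lambda)^{2n}}\ll\sum_{\det\Lambda\ge q_nT}\det(\Lambda)^{-2n},
$$
by Lemma~\ref{lem:condition_alphanonzero_detsize}(2). A dyadic decomposition with Lemma~\ref{lem:nooflatticesofboundedheight} bounds this by $\ll T^{-n}\le T^{-1}$. The same dyadic argument shows that $c_{O_n,r}$ converges. It is positive because, for instance, the lattice $\Lambda=\Z^r\times\{0\}$ contributes $\nu_\Lambda=2^r>0$ and $\nu_{\Lambda^\perp}=2^{n-r}>0$.

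The main obstacle is bookkeeping the $\Lambda^\perp$ error without any bound on its successive minima. Part (1) of Corollary~\ref{lem:asymp_for_beta} is only just strong enough: its worst term, $k=n$, is saved by the logarithm at $s=n$ in Lemma~\ref{lem:inverse_determinant_bound}. We should check that no term with exponent $s<n$ arises; here $s=2n-k\ge n$, so none does.
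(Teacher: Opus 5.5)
Your proposal is correct and follows essentially the same route as the paper. Both arguments expand $\beta_{n,r}(T,\Lambda)$ via part (2) of Corollary~\ref{lem:asymp_for_beta} and $\beta_{n,n-r}(T,\Lambda^\perp)$ via part (1), sum the error terms with Lemma~\ref{lem:inverse_determinant_bound}, and control the truncation with Lemma~\ref{lem:condition_alphanonzero_detsize}(2) and a dyadic count; the differences are only bookkeeping (you keep the factor $\lambda_r$ and reuse the sum bounded in the proof of Lemma~\ref{lem:gamma_beta_sum_bound}, and you keep all $k$ terms rather than collapsing to the paper's four error terms), plus your explicit positivity check via $\Z^r\times\{0\}$, which the paper leaves implicit.
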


\begin{proof}
By the definition of $\beta_{n,r}(T,\Lambda)$, we have
\begin{equation*}
    \beta_{n,r}(T,\Lambda)
    =
    \#\{(\b_1,\ldots,\b_n)\in(\Lambda\cap[-T,T]^n)^n\}
    =
    \left(\#(\Lambda\cap[-T,T]^{n})\right)^n.
\end{equation*}
Suppose that $\alpha_{n,r}(T,\Lambda)\neq0$. Then $\Lambda$ contains
$r$ linearly independent vectors in $[-T,T]^n$. Consequently, its
successive minima satisfy
\[
\lambda_1\leq\cdots\leq\lambda_r\leq\sqrt n\,T,
\]
so the second estimate \eqref{eq:boundedlatticecount} in Corollary~\ref{lem:asymp_for_beta} applies.
Since $\lambda_r \ll \min (\det(\Lambda),T)$, we have 
\[
    \frac{T^{r-1}\lambda_r}{\det(\Lambda)} \ll \min \left( T^{r-1}, \frac{T^r}{\det(\Lambda)}
    \right) .
\]
Hence we get 
\begin{align*}
 \beta_{n,r}(T,\Lambda)
 =
 \left(
 \frac{\nu_{\Lambda}T^r}{\det(\Lambda)}
 +
 O\left(
 \frac{T^{r-1}\lambda_r}{\det(\Lambda)}
 \right)
 \right)^n
 =
 \frac{\nu_{\Lambda}^nT^{rn}}{\det(\Lambda)^n}
 +
 O\left(
 \frac{T^{rn-1}}{\det(\Lambda)^{n-1}}
 \right).
\end{align*}

For $\beta_{n,n-r}(T,\Lambda^\perp)$, we use the first estimate \eqref{eq:latticeptsinsideregion} in
Corollary~\ref{lem:asymp_for_beta}. Since
$\nu_{\Lambda^\perp}\ll 1$ and 
$\det(\Lambda^\perp)=\det(\Lambda)$,
by Lemma \ref{lem:detlatticeorthogonal},
we obtain
\begin{align*}
 \beta_{n,n-r}(T,\Lambda^\perp)
 &=
 \left(
 \frac{\nu_{\Lambda^\perp}T^{n-r}}{\det(\Lambda^\perp)}
 +O(T^{n-r-1})
 \right)^n\\
 &=
 \frac{\nu_{\Lambda^\perp}^nT^{n(n-r)}}{\det(\Lambda)^n}
 +
 O\left(
 T^{n(n-r-1)} + \frac{T^{n^2-nr-1}}{\det(\Lambda)^{n-1}}
 \right).
\end{align*}

Multiplying the two estimates gives
\begin{align*}
 \beta_{n,r}(T,\Lambda)
 \beta_{n,n-r}(T,\Lambda^\perp)
 =
 \frac{\nu_{\Lambda}^n\nu_{\Lambda^\perp}^nT^{n^2}}
 {\det(\Lambda)^{2n}}
 &+ O\left(
 \frac{T^{n^2-1}}{\det(\Lambda)^{2n-1}} +
 \frac{T^{n^2-n}}{\det(\Lambda)^n}
 \right)\\
 &+ O\left(
 \frac{T^{n^2-n-1}}{\det(\Lambda)^{n-1}} +
 \frac{T^{n^2-2}}{\det(\Lambda)^{2n-2}}
 \right).
\end{align*}

Summing over the lattices satisfying
$\alpha_{n,r}(T,\Lambda)\neq0$, we obtain
\begin{align*}
 &\sum_{\substack{\Lambda\in\mathcal{P}_{n,r}\\
 \alpha_{n,r}(T,\Lambda)\neq0}}
 \beta_{n,r}(T,\Lambda)
 \beta_{n,n-r}(T,\Lambda^\perp) =
 T^{n^2}
 \sum_{\substack{\Lambda\in\mathcal{P}_{n,r}\\
 \alpha_{n,r}(T,\Lambda)\neq0}}
 \frac{\nu_{\Lambda}^n\nu_{\Lambda^\perp}^n}
 {\det(\Lambda)^{2n}}
 +E,
\end{align*}
where, by Lemma~\ref{lem:inverse_determinant_bound},
\begin{align*}
 E
 &=
 \sum_{\substack{\Lambda\in\mathcal{P}_{n,r}\\
 \alpha_{n,r}(T,\Lambda)\neq0}}
 O\left(
 \frac{T^{n^2-1}}{\det(\Lambda)^{2n-1}} +
 \frac{T^{n^2-n}}{\det(\Lambda)^n}
 +
 \frac{T^{n^2-n-1}}{\det(\Lambda)^{n-1}} +
 \frac{T^{n^2-2}}{\det(\Lambda)^{2n-2}}
 \right)\\
 &\ll
 T^{n^2-1}
 +T^{n^2-n}\log T
 +T^{n^2-1-(n-r)}
 +\begin{cases}
     T^{n^2-2}, & \text{if } n>2,\\
     T^{n^2-2}\log T, & \text{if } n=2,
 \end{cases}\\
 &\ll T^{n^2-1}.
\end{align*}

It remains to show that the sum in the main term converges as $T \to \infty$ and to bound the truncation error arising from the condition $ \alpha_{n,r}(T,\Lambda)\neq0$.  By Lemma~\ref{lem:condition_alphanonzero_detsize}, the condition
$\alpha_{n,r}(T,\Lambda)=0$ implies that $\det(\Lambda)\ge q_nT$ for some constant $q_n>0$. Therefore, we have 
\begin{align}\label{eq:truncationerror}
    T^{n^2}\sum_{\substack{\Lambda\in\mathcal{P}_{n,r}}}
\frac{\nu_{\Lambda}^n\nu_{\Lambda^\perp}^n}
{\det(\Lambda)^{2n}} -T^{n^2}\sum_{\substack{\Lambda\in\mathcal{P}_{n,r}\\
 \alpha_{n,r}(T,\Lambda)\neq0}} 
 \frac{\nu_{\Lambda}^n\nu_{\Lambda^\perp}^n}
{\det(\Lambda)^{2n}} \ll
T^{n^2}\sum_{\substack{\Lambda\in\mathcal{P}_{n,r}\\
\det(\Lambda)\ge q_nT}} 
 \frac{1}
{\det(\Lambda)^{2n}}.
\end{align}
Dyadic decomposition as in the proof of Lemma~\ref{lem:inverse_determinant_bound} shows that 
\[
\sum_{\substack{\Lambda\in\mathcal{P}_{n,r}\\
\det(\Lambda)\ge q_nT}} 
 \frac{1}
{\det(\Lambda)^{2n}} \ll \sum_{k=\lfloor \log_2(q_nT)\rfloor}^\infty \frac{2^{(k+1)n}}{2^{2nk}} \ll T^{-n}. 
\]
Hence  the series 
\[
    c_{O_n,r} = \sum_{\substack{\Lambda\in\mathcal{P}_{n,r}}}
\frac{\nu_{\Lambda}^n\nu_{\Lambda^\perp}^n}
{\det(\Lambda)^{2n}}
\]
converges and the overall  error term in \eqref{eq:truncationerror} is  $\ll T^{n^2-n}$.
\end{proof}

\subsection{Proof of Theorem~\ref{thm:factoring_zero}}
     If $\rank(A)=0$ then  $A = O_n$ and we have $AB = O_n$ for any matrix $B \in \mathcal{M}_n(\Z)$. The contribution to $\tau_n(T,O_n)$ from such pairs $(O_n,B)$ is therefore
       \begin{align*}
    \#\left\{B\in \mathcal{M}_n(\Z): |B|\leq T \right\}
    &= 2^{n^2}T^{n^2} + O(T^{n^2-1}).
    \end{align*}
    
     If $\rank(A) = n$, then $A$ is invertible and the equation $AB = O_n$ implies that $B = O_n$. We claim that the contribution to $\tau_n(T, O_n)$ from such pairs of matrices is
    \begin{align}\label{eq:invertibleAcontribution}
    \#\left\{A\in \mathcal{M}_n(\Z): |A| \le T,~\rank(A)=n \right\}
    = 2^{n^2}T^{n^2} + O(T^{n^2-1}).
    \end{align}
To see this, it follows from the  work of Katznelson \cite{katznelson} that  the number of matrices $A$ of rank at most $n-1$
    and norm $|A|\le T$ is $\ll T^{n^2 -n}\log T$. Since the total number of matrices $|A| \le T$ is  $(2T)^{n^2}+O(T^{n^2-1})$, the claimed bound \eqref{eq:invertibleAcontribution} follows.
    
    It remains to prove an asymptotic formula for the counting function 
    \[
    \tau_n^\star(T,O_n) =\#\left\{(A,B)\in \mathcal{M}_n(\Z)^2: |A|,|B|\leq T, ~AB=O_n,~1\le\rank(A)\le n-1
\right\}.
    \]
    Let us thus assume that $r = \rank(A) \in \{ 1, \ldots, n-1\}$, and write 
    \[
    A = \begin{pmatrix}
        \textbf{ ---} & \a_1 & \textbf{--- } \\
         \textbf{ } & \vdots & \textbf{ } \\
          \text{ ---} & \a_n & \textbf{--- } 
    \end{pmatrix},  \qquad  
    B = 
    \begin{pmatrix}
        \vert & \text{ } & \vert \\
        \b_1 & \hdots & \b_n \\ 
         \vert & \text{ } & \vert 
    \end{pmatrix}.
    \]
    Then, since $AB = O_n$, we have 
    \begin{align}\label{eq:orthogonalityzero}
         \langle \a_i^t, \b_j
    \rangle = 0
    \end{align}
    for all $1 \le i,j \le n$, where $\langle \a_i^t, \b_j
    \rangle$ denotes the Euclidean inner product of the two vectors $\a_i^t$ and $\b_j$.
    There is a unique primitive lattice $\Lambda \subset \Z^n$ of $\rank(\Lambda) = r$ such that $\a_1, \ldots, \a_n \in \Lambda$, namely 
    \[
    \Lambda = \langle 
    \a_1, \ldots, \a_n \rangle_{\Q}\cap \Z^n.\]
    By \eqref{eq:orthogonalityzero}, we get $\b_1, \ldots, \b_n \in \Lambda^{\perp}$. 
    Hence we have 
    \begin{align*}
        \tau^\star_n (T, O_n) = \sum_{r=1}^{n-1} \sum_{\Lambda \in \mathcal{P}_{n,r}} 
    \alpha_{n,r} (T, \Lambda) \beta_{n,n-r} (T, \Lambda^\perp),
    \end{align*}
    where $\alpha_{n,r}(T, \Lambda)$  and $\beta_{n,n-r}(T, \Lambda^\perp)$ are defined in \eqref{eq:defalpha} and \eqref{eq:defbeta}. Now, since 
    $$\alpha_{n,r}(T,\Lambda) = \beta_{n,r}(T,\Lambda) - \gamma_{n,r}(T,\Lambda),
    $$ 
    we may put together  Lemma~\ref{lem:gamma_beta_sum_bound} and Lemma~\ref{lem:beta_beta_sum_asymp} to deduce that 
    \[
    \tau^\star_n (T, O_n) = \sum_{r=1}^{n-1} \sum_{\substack{\Lambda \in \mathcal{P}_{n,r} \\
    \alpha_{n,r}(T,\Lambda)\neq 0}} 
    \beta_{n,r} (T, \Lambda) \beta_{n,n-r} (T, \Lambda^\perp) + O(T^{n^2-1}) = c_{O_n}^\star T^{n^2} + O(T^{n^2-1}),
    \]
    where 
    \[
    c_{O_n}^\star = \sum_{r=1}^{n-1} 
     \sum_{\Lambda \in \mathcal{P}_{n,r}}
     \frac{\nu_{\Lambda}^n\nu_{\Lambda^\perp}^n}
    {\det(\Lambda)^{2n}}.
    \]

    Finally, combining all the contributions   gives  
    \[
    \tau_{n}(T, O_n) = c_{O_n}T^{n^2} + O(T^{n^2-1}),
    \]
    where 
    \begin{equation}\label{c_n}
        c_{O_n} = 2^{n^2+1} + \sum_{r=1}^{n-1} 
     \sum_{\substack{\Lambda \in \mathcal{P}_{n,r}}} 
     \frac{\nu_{\Lambda}^n\nu_{\Lambda^\perp}^n}
    {\det(\Lambda)^{2n}}
    \end{equation}
    and
    $\nu_{\Lambda}, \nu_{\Lambda^\perp}$ are defined  in \eqref{eq:villach}.
This completes the proof of the theorem.  \qed


\begin{thebibliography}{99}

\bibitem{Afif} M. Afifurrahman, Some counting questions for matrix products. {\it Bull. Aust. Math.
Soc.} {\bf 110} (2024), 32--43.


\bibitem{arala}
N. Arala, J. R. Getz, J. Hou, C.-H. Hsu, H. Li, V. Y. Wang,
A nonabelian circle method. {\it Preprint}, 2024. (\href{https://arxiv.org/abs/2407.11804}{arXiv:2407.11804})

\bibitem{BR}
G.~Bhowmik and O.~Ramar\'e,
Algebra of matrix arithmetic.
{\it J.\ Algebra} \textbf{210} (1998), 194--215.

\bibitem{bhowmik_wu_zeta2001}
G.~Bhowmik and J.~Wu,
Zeta function of subgroups of abelian groups and average orders.
{\it J.\ reine angew.\ Math.} \textbf{530} (2001), 1--15.


\bibitem{BDL}
B. J. Birch, H. Davenport and D. J. Lewis,
The addition of norm forms. {\it Mathematika} {\bf 9} (1962), 75--82.

\bibitem{chikuse_macg1990}
Y.~Chikuse, 
The matrix angular central Gaussian distribution.
{\it J.\ Multivariate Anal.} \textbf{33} (1990), 265--274.

\bibitem{davenport_on_a_principle}
H.~Davenport,
On a principle of Lipschitz.
{\it J.\ London Math.\ Soc.} \textbf{26} (1951), 179--183.


\bibitem{DRS}
W. Duke, Z. Rudnick and P. Sarnak, Density of integer points on
affine homogeneous varieties. {\it  Duke Math. J.}
{\bf 71} (1993), 143--179.

\bibitem{cartandecomp}
P. J. Forrester, 
Volumes for ${\mathrm{SL}}_N(\mathbb{R})$, the Selberg
              integral and random lattices.
{\it Found.\ Comput.\ Math.} {\bf 19} (2019), 55--82. 

\bibitem{volumesgilletgrayson}
H. Gillet and D. R. Grayson, 
Volumes of symmetric spaces via lattice points.
{\it Doc.\ Math.} {\bf 11} (2006),  425--447.

\bibitem{gorodnik_nevo}
A. Gorodnik and  A. Nevo, Counting lattice points. {\it J. reine angew. Math.} {\bf 663}
(2012), 127--176.

\bibitem{HB} 
D.R. Heath-Brown, 
A new form of the circle method, and its application to quadratic forms. 
{\it J.\ reine angew. Math.} {\bf 481} (1996), 149--206.


\bibitem{helgasongroupsandgeom}
S. Helgason, {\it Groups and geometric analysis}.
Math.\  Surveys and Monographs {\bf 83},
American Math.\ Soc., Providence, RI, 2000.


\bibitem{katznelson}
Y. R.~Katznelson,
Singular matrices and a uniform bound for congruence groups of
$\operatorname{SL}_n(\mathbb{Z})$.
{\it Duke Math.\ J.} \textbf{69} (1993), 121--136.



\bibitem{krieg} 
A. Krieg, Counting modular matrices with specified maximum norm. {\it Linear Algebra Appl.} {\bf 196} (1994), 273--278.

\bibitem{Mau}
F. Maucourant, Homogeneous asymptotic limits of {Haar} measures of semisimple linear groups and their lattices. {\it Duke Math. J.} {\bf 136} (2007), 357--399.

\bibitem{simon}
S. L. Rydin Myerson,
Quadratic forms and systems of forms in many variables.
{\it Invent. Math.} {\bf 213} (2018), 205--235.


\bibitem{Schm} W. Schmidt, Asymptotic formulae for point lattices of bounded determinant and subspaces of
bounded height.
{\it Duke Math. J.} {\bf  35} (1968), 327--339.


\bibitem{siegemvt}
C. L. Siegel, A mean value theorem in geometry of numbers.
{\it Annals of Math.} {\bf 46} (1945), 340--347.


\bibitem{siegel}
C. L. Siegel, {\it Lectures on the geometry of numbers}.
Springer-Verlag, Berlin, 1989.


\bibitem{widmer}
M.~Widmer,
Counting primitive points of bounded height.
{\it Trans.\ Amer.\ Math.\ Soc.} \textbf{362} (2010), 4793--4829.


\end{thebibliography}
\end{document}